\newcommand{\C}{\mathbb{C}}
\theoremstyle{plain}
\newtheorem{theorem}{Theorem}[section]
\newtheorem{proposition}[theorem]{Proposition}
\newtheorem{corollary}[theorem]{Corollary}
\newtheorem{lemma}[theorem]{Lemma}
\theoremstyle{definition}
\newtheorem{definition}[theorem]{Definition}
\newtheorem{notation}[theorem]{Notation}
\theoremstyle{remark}
\newtheorem{remark}[theorem]{Remark}
\newcommand{\Z}{\mathbb{Z}}
\newcommand{\cD}{\mathcal{D}}
\newcommand{\cS}{\mathcal{S}}
\newcommand{\cE}{\mathcal{E}}
\newcommand{\cF}{\mathcal{F}}
\newcommand{\VEV}[1]{{\big\langle 0 \big| {#1} \big| 0 \big\rangle}}
\newcommand{\res}{\mathop{\rm res}}
\newcommand{\Hd}{H^\bullet}
\newcommand{\Hc}{H}
\newcommand{\euro}{\mathbb{J}}
\begin{document}

\title{Explicit closed algebraic formulas for 
Orlov--Scherbin $n$-point functions}

\author[B.~Bychkov]{Boris~Bychkov}
\address{B.~B.: Faculty of Mathematics, National Research University Higher School of Economics, Usacheva 6, 119048 Moscow, Russia; and Center of Integrable Systems, P.G. Demidov Yaroslavl State University, Sovetskaya 14, 150003,Yaroslavl, Russia}
\email{bbychkov@hse.ru}

\author[P.~Dunin-Barkowski]{Petr~Dunin-Barkowski}
\address{P.~D.-B.: Faculty of Mathematics, National Research University Higher School of Economics, Usacheva 6, 119048 Moscow, Russia; HSE--Skoltech International Laboratory of Representation Theory and Mathematical Physics, Skoltech, Nobelya 1, 143026, Moscow, Russia; and ITEP, 117218 Moscow, Russia}
\email{ptdunin@hse.ru}

\author[M.~Kazarian]{Maxim~Kazarian}
\address{M.~K.: Faculty of Mathematics, National Research University Higher School of Economics, Usacheva 6, 119048 Moscow, Russia; and Center for Advanced Studies, Skoltech, Nobelya 1, 143026, Moscow, Russia}
\email{kazarian@mccme.ru}

\author[S.~Shadrin]{Sergey~Shadrin}
\address{S.~S.: Korteweg-de Vries Institute for Mathematics, University of Amsterdam, Postbus 94248, 1090 GE Amsterdam, The Netherlands}
\email{S.Shadrin@uva.nl}	

\begin{abstract}
	We derive a new explicit formula in terms of sums over graphs for the $n$-point correlation functions of general formal weighted double Hurwitz numbers coming from the Kadomtsev--Petviashvili tau functions of hypergeometric type (also known as Orlov--Scherbin partition functions). Notably, we use the change of variables suggested by the associated spectral curve, and our formula turns out to be a polynomial expression in a certain small set of formal functions defined on the spectral curve.
\end{abstract}

\maketitle	
	
\tableofcontents

\section{Introduction}

\subsection{Hurwitz numbers and KP tau functions of hypergeometric type}

Hurwitz numbers enumerate topologically distinct ramified coverings of the sphere $S^2$ by Riemann surfaces with prescribed ramification data. Different types of Hurwitz numbers are distinguished by the way the ramification data is specified. This data can be encoded in the values of parameters $c_k$, $s_k$, $k=1,2,\dots$, collected into two formal power series

\begin{align}\label{eq:psiY}
\psi(y)&:=\sum_{k=1}^\infty c_k y^k,\\
y(z)&:=\sum_{k=1}^\infty s_k z^k.
\end{align}
We do not reproduce here the precise combinatorial definition of the Hurwitz numbers we are interested in, instead, we identify them as the Taylor coefficients of the corresponding generating function~$F(p_1,p_2,\dots)$ introduced below. Namely, its exponential $Z=\exp F$ is a \emph{Kadomtsev--Petviashvili tau function of hypergeometric type} (also known as an \emph{Orlov--Scherbin partition function})~\cite{Kharchev,OrlovScherbin,OrlovScherbin-2} given explicitly by its expansion in the basis of Schur functions
\begin{equation}\label{eq:OS}
Z=e^{F}=\sum_\lambda e^{\sum_{(i,j)\in\lambda}\psi(\hbar(j-i))}s_\lambda(p)s_\lambda(s/\hbar).
\end{equation}
We regard $Z$ and $F$ as formal power series in the variables $p_1,p_2,\dots$ depending on additional parameters $c_k, s_k$, and~$\hbar$.
The summation runs over the set of all partitions (Young diagrams)~$\lambda$ including the empty one, $s_\lambda$ denotes the corresponding Schur symmetric function represented as a polynomial in the power sums~$p_k$. Parameters $c_k$ are involved as the coefficients of the series~$\psi$ while $s_k$ are substituted as the arguments of $s_\lambda$ via $s/\hbar=(s_1/\hbar,s_2/\hbar,\dots)$. We regard a Young diagram $\lambda$ as a table of rows of lengths $\lambda_1\ge\lambda_2\ge\dots\ge0$, and for a cell of this table with coordinates $(i,j)$ its \emph{content} is defined as the difference $j-i$ of coordinates. By that reason, the exponent $e^{\sum_{(i,j)\in\lambda}\psi(\hbar(j-i))}$ is referred to sometimes as \emph{content product}.

The (formal) Hurwitz numbers $h_{g,(m_1,\dots,m_n)}$ associated with the series $F$ are defined by the expansion
\begin{equation}
\frac{\partial^n F}{\partial p_{m_1}\dots\partial p_{m_n}}\Bigm|_{p=0}=
\sum_{g=0}^\infty\hbar^{2g-2+n}h_{g,(m_1,\dots,m_n)}.
\end{equation}

Generating functions for many particular families of Hurwitz numbers (e.g. simple, monotone, Bousquet-M\'elou--Schaeffer numbers, Grothendieck's dessins d'enfants, and many others numbers of similar nature both of single, orbifold or double types) are included in~$F$ for particular values of parameters, see~Table \ref{tab:Hurw} (cf.~\cite{ACEH-1,ALS,Harnad2014,KazLand}).
\begin{table}[H]
	\renewcommand{\arraystretch}{1.5}
	\begin{tabular}{|c|c|}
		\hline
		Hurwitz numbers & $e^{\psi(y)}$\\
		\hline
		\hline
		usual & $e^{y}$ \\
		atlantes
		& $e^{y^r}$ \\
		monotone & $1/(1-y)$ \\
		strictly monotone & $1+y$ \\
		hypermaps & $(1+uy)(1+vy)$\\
		BMS numbers & $(1+y)^m$ \\
		polynomial weighted& $\sum_{k=1}^d c_k y^k$\\
		\hline
		general weighted & $\exp\left({\sum_{k=1}^\infty c_k y^k}\right)$\\
		\hline
	\end{tabular} \quad \begin{tabular}{|c|c|}
	\hline
	Variations & $y(z)$\\
	\hline
	\hline
	simple & $z$ \\
	orbifold & $z^q$ \\
	\hline
	double & $\sum_{k=1}^\infty s_kz^k$ \\
	\hline
\end{tabular}
	\renewcommand{\arraystretch}{1}
	\caption{Types of Hurwitz numbers}
	\label{tab:Hurw}
\end{table}

In the most general case, when~$\psi$ and~$y$ are arbitrary power series, the Taylor coefficients~$h_{g,(m_1,\dots,m_n)}$ have combinatorial meaning of \emph{weighted double Hurwitz numbers} (see e.~g.~\cite{Harnad2014}).
Roughly speaking, when regarding $F$ as a generating series for Hurwitz numbers, the ramification over the point $\infty\in S^2=\C P^1$ is encoded by a monomial in $p$-variables, the ramification over $0$ corresponds to a monomial in $s$-variables, the ramification types over the points different from~$0$ and~$\infty$ are described by the explicit form of the series~$\psi$. The exponent of the variable $\hbar$ is the negative Euler characteristic $2g-2+n$ of the covering surface punctured at the preimages of~$\infty$ where~$g$ is the genus of the covering surface and~$n$ is the number of preimages of~$\infty$.

\subsection{\texorpdfstring{$n$}{n}-point functions}

Formula~\eqref{eq:OS} is quite explicit and efficient for the numerical computation of particular Hurwitz numbers. Therefore, the main interest is related not to the computation of a single Hurwitz number but to the study of analytical and integrable properties of their generating functions. These properties are often formulated in terms of the \emph{connected} and \emph{disconnected}, respectively, \emph{$n$-point correlation functions} defined by
\begin{align}\label{eq:Hndef}
H_n&=\sum_{k_1,\dots,k_n=1}^\infty\frac{\partial^n F}{\partial p_{k_1}\dots\partial p_{k_n}}\Bigm|_{p=0}X_1^{k_1}\dots X_n^{k_n},\\ \label{eq:Hdndef}
\Hd_n&=\sum_{k_1,\dots,k_n=1}^\infty\frac{\partial^n Z}{\partial p_{k_1}\dots\partial p_{k_n}}\Bigm|_{p=0}X_1^{k_1}\dots X_n^{k_n}.
\end{align}
These are infinite power series in $X_1,\dots,X_n$ serving as an alternative way of collecting Hurwitz numbers enumerating connected and disconnected, respectively, coverings of the sphere. Connected and disconnected $n$-point functions are related to one another by inclusion-exclusion relations
\begin{eqnarray}\label{eq:inclexclH}
\Hd_n(X_{\{1,\dots,n\}})= \sum\limits_{I\vdash\{1,\ldots,n\}}\prod\limits_{i=1}^{|I|}H_{|I_i|}(X_{I_i}),\\
H_n(X_{\{1,\dots,n\}})= \sum\limits_{I\vdash\{1,\ldots,n\}}(-1)^{|I|-1}(|I|-1)!\prod\limits_{i=1}^{|I|}\Hd_{|I_i|}(X_{I_i}),
\end{eqnarray}
where the sums run over all unordered partitions of the set $\{1,\dots,n\}$, and for $I=\{i_1,i_2,\dots\}$ we denote $X_I:=(X_{i_1},X_{i_2},\dots)$. The connected $n$-point function admits a \emph{genus decomposition}
\begin{equation}
H_n=\sum_{g=0}^\infty \hbar^{2g-2+n}H_{g,n},
\end{equation}
where $H_{g,n}$ is independent of~$\hbar$:
\begin{equation}
H_{g,n}=\sum_{m_1,\dots,m_n=1}^\infty h_{g,(m_1,\dots,m_n)}X_1^{m_1}\dots X_n^{m_n}.
\end{equation}
This follows from combinatorial interpretation of Hurwitz numbers, but it is also a formal corollary of the computation of~$H_{n}$ of the present paper.

One of the main discoveries of last years in the theory of Hurwitz numbers is the fact that in many cases the $n$-point functions $H_{g,n}$ are governed by the \emph{topological recursion}, a formalism allowing to compute $H_{g,n}$ inductively in~$g$ and~$n$. One of the most general cases for which the topological recursion relations had been proved by the time we wrote the first version of the present paper is the one when both $e^{\psi(y)}$ and $y(z)$ are polynomials~\cite{ACEH-1,ACEH-2}. Consider the following power series
\begin{equation}\label{eq:X(z)}
X(z)=z\,e^{-\psi(y(z))}
\end{equation}
where $\psi$ and $y$ are given by~\eqref{eq:psiY}, and apply the local change of coordinates $X_i=X(z_i)$ to each of the arguments of $H_{g,n}$. One of the corollaries of the topological recursion is that the \emph{function $H_{g,n}$ written in $z$-coordinates is rational}. Do note, however, that for the approach of \cite{ACEH-1,ACEH-2} the polynomiality of $e^{\psi(y)}$ and $y(z)$ was crucial. See also Remark~\ref{rem:toprec} below. 

In this paper we show that the function $H_{g,n}$ simplifies considerably after the change~\eqref{eq:X(z)} even without any assumption of polynomiality or rationality (or even convergence) for the series $\psi(y)$ and~$y(z)$. The main result of the paper is an explicit closed formula for $H_{g,n}$ for each pair $(g,n)$. Through the change~\eqref{eq:X(z)} we have
\begin{align}\label{eq:Ddef}
D&:= X\frac{\partial}{\partial X}=\frac{1}{Q}z\frac{\partial}{\partial z},
\end{align}
where
\begin{align}\label{eq:Qdef}
Q&=\frac{z}{X}\frac{dX}{dz}=1-D\psi(y(z))=1-z\,y'(z)\,\psi'(y(z)).
\end{align}

\begin{theorem}\label{th:intro}
In the unstable cases $2g-2+n\le0$ the $n$-point functions are given by
\begin{align}
D_1 H_{0,1}&=y(z_1),\label{eq:H01}\\
H_{0,2}&=\log\left(\frac{z_1^{-1}-z_2^{-1}}{X_1^{-1}-X_2^{-1}}\right),\label{eq:H02}
\end{align}
and for all $(g,n)$ with $2g-2+n>0$ the function $H_{g,n}$ written in $z$-coordinates admits a closed expression of the form
\begin{equation}\label{eqTh1}
H_{g,n}=\sum_{j_1,\dots,j_n = 0}^\infty D_1^{j_1}\dots D_n^{j_n}
 \frac{P_{g;j_1,\dots,j_n}}{Q_1\dots Q_n}+c_{g,n}
\end{equation}
with finitely many nonzero summands, where $Q_i=Q(z_i)$, $D_i=D(z_i)=\frac1{Q_i}z_i\partial_{z_i}$, and $P_{g;j_1,\dots,j_n}$ is a polynomial combination of functions 
$\frac{z_j}{z_i-z_j}$ and 
derivatives $\psi^{(k)}(y(z_i))$ and
$\left(z_i\partial_{z_i}\right)^ky(z_i)$, $k\ge1$, $i=1,\dots, n$.
Finally, $c_{g,n}$ is a constant explicitly given by
\begin{equation}
c_{g,n}=(-1)^n\;\psi^{(2g-2+n)}(0)\;[u^{2g}]\left(\frac{u}{e^{u/2}-e^{-u/2}}\right)^2,
\end{equation}
where $[u^{2g}]$ denotes the coefficient in front of $u^{2g}$ in the series expansion.
\end{theorem}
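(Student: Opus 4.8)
The plan is to start from the fermionic (or, equivalently, semi-infinite wedge) representation of the Orlov--Scherbin tau function and extract a closed expression for the disconnected $n$-point functions $\Hd_n$ as a determinant or, after passing to connected functions, as a sum over graphs. Concretely, one writes $Z$ as a vacuum expectation value of a product of operators built from $\psi(y)$ and $y$, so that the derivatives $\partial^n F/\partial p_{k_1}\cdots\partial p_{k_n}|_{p=0}$ become vacuum expectations of products of fermionic bilinears. Summing over $k_1,\dots,k_n$ with the weights $X_i^{k_i}$ turns each such bilinear into a single ``principal specialization'' operator whose matrix elements are explicit geometric series in the $z_i$ variables; the key point is that the change of variables $X=X(z)=z\,e^{-\psi(y(z))}$ from \eqref{eq:X(z)} is exactly the one that diagonalizes the relevant part of the computation, producing the operator $Q$ of \eqref{eq:Qdef} and the operator $D=\frac1Q z\partial_z$ of \eqref{eq:Ddef} as the natural building blocks.

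Next I would carry out the standard ``principal specialization + Wick'' computation: the disconnected $n$-point function $\Hd_n$ becomes (up to the unstable pieces) a determinant of a matrix whose $(i,j)$ entry is a two-point kernel $\mathcal K(z_i,z_j)$, and the connected function $H_n$ is the corresponding sum over connected graphs (cyclic permutations / the ``connected part'' of $\log\det$). Each two-point kernel, after the change of variables and after extracting the $\hbar$-expansion, splits into a principal part that reproduces $H_{0,2}$ from \eqref{eq:H02} plus a remainder that, crucially, is a finite sum of terms of the form $D_i^{j}D_j^{k}$ applied to $\frac{z_j}{z_i-z_j}\cdot(\text{polynomial in }\psi^{(k)}(y(z_i)),\ (z_i\partial_{z_i})^k y(z_i))$ divided by $Q_iQ_j$. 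The genus filtration is controlled because the ``quantum'' corrections in the kernel carry positive powers of $\hbar$, and for fixed $g$ only finitely many of them contribute; this is what makes the sum in \eqref{eqTh1} finite. The $\frac{z_j}{z_i-z_j}$ rational factors come from the off-diagonal geometric summation $\sum_k (z_j/z_i)^k$, and the $D$-operators and $Q$-denominators come from differentiating $X_i^{k_i}$ with respect to $X_i$ and re-expressing in $z_i$.

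For the constant $c_{g,n}$ I would isolate the contribution to $H_{g,n}$ of the ``most degenerate'' graph, in which all the rational propagators $\frac{z_j}{z_i-z_j}$ are absent and every vertex sees the leading jet of $\psi$ at $y=0$; equivalently, $c_{g,n}$ is the $z_i$-independent term, obtained by sending all $z_i\to0$ (or extracting the constant term in the $z$-expansion). In that limit only $\psi^{(2g-2+n)}(0)$ survives from the $\psi$-data, and the remaining combinatorial factor is a purely numerical generating series; one recognizes it as the coefficient $[u^{2g}](u/(e^{u/2}-e^{-u/2}))^2$, the standard ``vertex contribution'' that already appears in the semi-infinite wedge computations of simple Hurwitz numbers (the $\mathcal E$-operator / cut-and-join normalization). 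The sign $(-1)^n$ is bookkeeping from the $X_i^{-1}$ versus $z_i^{-1}$ conventions in \eqref{eq:H02}--\eqref{eqTh1}. I would verify this constant independently by specializing to, say, $\psi(y)=y$ (usual Hurwitz numbers) and $n=1$, where the ELSV-type formula pins down $c_{g,1}$.

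The main obstacle, I expect, is twofold: first, organizing the combinatorics so that the resulting expression is manifestly a \emph{polynomial} in the small generating set $\{\frac{z_j}{z_i-z_j},\ \psi^{(k)}(y(z_i)),\ (z_i\partial_{z_i})^k y(z_i)\}$ rather than merely a rational function — this requires showing that all spurious poles (in particular at $z_i=z_j$ of order higher than what $\frac{z_j}{z_i-z_j}$ supplies, and at the zeros of $Q_i$) cancel, which is really a statement that the $D_i$-derivatives and the $1/Q_i$ prefactors conspire correctly. Second, controlling the unstable cases $H_{0,1}$ and $H_{0,2}$: these are not given by the generic graph sum and must be inserted ``by hand'' as the $2g-2+n\le0$ initial data, and one must check that their appearances as subgraphs inside the stable $H_{g,n}$ reassemble exactly into the claimed form \eqref{eqTh1} with the stated constant, rather than producing extra $\log$ or non-polynomial contributions. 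Everything else — the Wick expansion, the geometric summation, the $\hbar$-degree count — is routine once the right operators $D$ and $Q$ are in place.
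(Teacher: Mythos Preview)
Your overall architecture---start from the vacuum expectation value, commute operators, pass to connected functions via a graph sum, and then identify the constant term---matches the paper's strategy in broad outline. But there is a genuine missing mechanism at the center of your sketch.

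The step you describe as ``routine'' is exactly where the hard work lives. After the Wick-type expansion and the geometric summation over the $z_i$-variables, you are still left with an \emph{infinite} sum over $m_i\in\mathbb{Z}_{>0}$ of terms $X_i^{m_i}\,\partial_y^r\phi_{m_i}(y)\big|_{y=0}\,[z^{m_i}](\cdots)$. Nothing in your proposal explains why this collapses to a finite expression in the $z_i$. In the paper this is achieved by what is called the \emph{principal identity} (Proposition~\ref{prop:principal}): one writes $\phi_m(y)=e^{m\psi(y)}L_0(m,y,\hbar)$ where $L_0$ is, for each fixed power of $\hbar$, a \emph{polynomial} in $m$; then the Lagrange--B\"urmann formula $\sum_m X^m[z^m]e^{m\psi(y(z))}H=\tfrac{1}{Q}H$ converts the polynomial in $m$ into a finite string of $D$-operators acting on $H/Q$. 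This is precisely why the change of variables $X=z\,e^{-\psi(y(z))}$ is the right one, and it is the step that produces both the $D_i^{j_i}$ and the $1/Q_i$ in~\eqref{eqTh1}. Your phrase ``diagonalizes the relevant part of the computation'' gestures at this but does not supply the mechanism; without it, you cannot get finiteness.

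Two smaller points. First, your determinantal/log-det framing yields connected functions as sums over \emph{cycles}, whereas the paper's expansion of $\prod_{k<\ell}(1+w_{k,\ell})$ gives a sum over \emph{all connected simple graphs} on $n$ vertices; the latter is what actually arises here, and the vertex operators $U_i$ act multiplicatively on the edge factors meeting vertex $i$. Second, the sign $(-1)^n$ in $c_{g,n}$ is not a convention artifact: it comes from the rational-function identity $\sum_{k=1}^n\prod_{i\neq k}\tfrac{z_i}{z_k-z_i}=(-1)^{n-1}$, which appears when one evaluates the star-graph contributions modulo the ideal $(z_1\cdots z_n)$. Your ``most degenerate graph with no propagators'' picture is inverted---the constant arises from star graphs, not from edgeless ones.
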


In particular, an immediate corollary of this theorem is the following statement:

\begin{corollary} If both $y'(z)$ and $\psi'(y)$ are rational functions then $H_{g,n}$ is a rational function in $z_1,\dots,z_n$.
\end{corollary}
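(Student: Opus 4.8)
The plan is to read the statement off the closed formula~\eqref{eqTh1} of Theorem~\ref{th:intro}: the only work is to check that, under the hypothesis, every ingredient occurring there is a rational function of $z_1,\dots,z_n$ and that the operations applied to those ingredients do not lead outside the field $\C(z_1,\dots,z_n)$. First I would restrict to the stable range $2g-2+n>0$ (for $(g,n)=(0,1)$ and $(0,2)$ the functions $H_{g,n}$ are genuinely transcendental, cf.~\eqref{eq:H01}--\eqref{eq:H02}), and note that~\eqref{eqTh1} exhibits $H_{g,n}$ as a \emph{finite} sum of terms $D_1^{j_1}\cdots D_n^{j_n}\bigl(P_{g;j_1,\dots,j_n}/(Q_1\cdots Q_n)\bigr)$ plus the constant $c_{g,n}$. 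Since a constant is rational and a finite sum of rational functions is rational, it suffices to show that one such term lies in $\C(z_1,\dots,z_n)$.

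Next I would go through the elementary functions out of which $P_{g;j_1,\dots,j_n}$ is assembled according to Theorem~\ref{th:intro}. The functions $z_j/(z_i-z_j)$ are manifestly rational. Each $(z_i\partial_{z_i})^{k}y(z_i)$ with $k\ge1$ expands into a polynomial in $z_i$ and in the derivatives $y'(z_i),\dots,y^{(k)}(z_i)$, all of which are rational because a derivative of a rational function is rational and $y'$ is rational; hence $(z_i\partial_{z_i})^{k}y(z_i)\in\C(z_i)$. For each $\psi^{(k)}(y(z_i))$ with $k\ge1$, differentiating the rational function $\psi'$ repeatedly shows that every $\psi^{(k)}$, $k\ge1$, is rational in its argument, and substituting the rational function $z_i\mapsto y(z_i)$ into it yields a rational function of $z_i$. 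Therefore the polynomial combination $P_{g;j_1,\dots,j_n}$ lies in $\C(z_1,\dots,z_n)$; applying the same two remarks to~\eqref{eq:Qdef} shows that every $Q_i=1-z_iy'(z_i)\psi'(y(z_i))$ is rational, so $P_{g;j_1,\dots,j_n}/(Q_1\cdots Q_n)\in\C(z_1,\dots,z_n)$.

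Finally I would verify that the operators $D_i=\frac{1}{Q_i}z_i\partial_{z_i}$ from~\eqref{eq:Ddef} preserve rationality: $z_i\partial_{z_i}$ is a derivation of $\C(z_1,\dots,z_n)$, hence maps this field to itself, and so does multiplication by the rational function $z_i/Q_i$; consequently any composition $D_1^{j_1}\cdots D_n^{j_n}$ of finitely many such operators maps $\C(z_1,\dots,z_n)$ into itself. Applying this to $P_{g;j_1,\dots,j_n}/(Q_1\cdots Q_n)$, summing the finitely many nonzero contributions in~\eqref{eqTh1}, and adding $c_{g,n}$, gives $H_{g,n}\in\C(z_1,\dots,z_n)$.

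There is no genuine obstacle here — this is an honest corollary of Theorem~\ref{th:intro} — but the step deserving some care is the composition $\psi^{(k)}\circ y$ and, through it, the rationality of the denominators $Q_i$ and of the operators $D_i$. This is where the hypothesis is used in the sharp form that $\psi'$ is rational \emph{in the variable $y$} while $y(z)$ is rational \emph{in $z$} (rationality of $y'(z)$ alone being all that the terms $(z_i\partial_{z_i})^{k}y(z_i)$ require): it is exactly this that guarantees each $\psi^{(k)}(y(z_i))$, and hence each $Q_i$ and each $D_i$, to be rational in $z_i$, and therefore makes the whole of~\eqref{eqTh1} a rational function of $z_1,\dots,z_n$.
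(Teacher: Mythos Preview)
Your overall strategy is right and matches the paper's one-line ``immediate corollary'': read off rationality from the finitely many ingredients in~\eqref{eqTh1}. The treatment of $z_j/(z_i-z_j)$, of $(z_i\partial_{z_i})^k y(z_i)$, and of the operators $D_i$ is fine.

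There is, however, a real gap in the step for $\psi^{(k)}(y(z_i))$ (and hence for $Q_i$). You write that $\psi^{(k)}$ is rational in $y$ and then ``substitut[e] the rational function $z_i\mapsto y(z_i)$'' --- but the hypothesis says $y'(z)$ is rational, not $y(z)$; for instance $y(z)=-\log(1-z)$ has $y'(z)=1/(1-z)$ rational while $y$ itself is transcendental. You repeat this in the final paragraph, asserting ``$y(z)$ is rational in $z$'' as if it were given. The fix depends on how one reads the corollary. If, as the paper's standing convention $y=y(z)$ suggests, ``$\psi'(y)$ is rational'' means that the \emph{composition} $\psi'(y(z))$ is rational in $z$, then argue inductively via the chain rule: $\psi^{(k+1)}(y(z))=\partial_z\bigl(\psi^{(k)}(y(z))\bigr)/y'(z)$ is rational since both the numerator and $y'(z)$ are. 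If instead it means $\psi'$ is rational as a function of the abstract variable $y$, then with $\psi'(y)=y$ and the $y(z)$ above one gets $Q=1+z\log(1-z)/(1-z)$, which is not rational --- so the hypothesis under your reading is actually too weak, and one genuinely needs $y(z)$ itself rational; in that case you should say so explicitly rather than silently upgrade the assumption.
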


An explicit description of the terms entering the formula for $H_{g,n}$ (i.e. a formula where all polynomials $P_{g;j_1,\dots,j_n}$ are given explicitly) is presented in Theorem~\ref{th:MainTh} in the case $n>2$ and in Section~\ref{sec:exceptions} in the exceptional cases $n=1$ and $n=2$. It might look somewhat wired but it is actually quite explicit and can be used for practical computations. The formula holds true even in those cases when $\psi(y)$ and~$y(z)$ are just formal series with no assumption of rationality or convergence and the topological recursion is not applicable in principle. Moreover, even in those cases when $e^{\psi(y)}$ and~$y(z)$ are such that 
the topological recursion can be applied (e.g. when they are polynomial, as in \cite{ACEH-1,ACEH-2}, or in a more general case as referred to in Remark~\ref{rem:toprec}) our formula is more efficient since the number of its terms does not depend on the degrees of those polynomials and it does not require finding roots of algebraic equations determining critical points of the function~$X(z)$.


\begin{remark}
The left hand side of~\eqref{eqTh1} is a formal power series in~$z_1,\dots,z_n$ while individual summands of the right hand side have poles on the diagonals $z_i=z_j$ and their interpretation requires additional comments. First note that if both $\psi'(y)$ and $y'(z)$ are rational functions then all terms of~\eqref{eqTh1} are also rational, and the equality implies, in particular, that all poles on the diagonals on the right hand side cancel out (see Corollary~\ref{cor:Hdiag}).

In the general case, one of the possibilities to interpret Equation~\eqref{eqTh1} is to consider asymptotic Laurent expansion of all of its terms in the sector $|z_1|\ll  |z_2|\ll\dots\ll |z_n|\ll 1$. This power expansion involves monomials in $z_1,\dots, z_n$ containing both positive and negative powers of the variables $z_i$.

It is much more advisable, however, to treat the terms of~\eqref{eqTh1} in a different way. Namely we consider them as elements of the  ring $R=\mathbb{C}[[z_1,\dots,z_n]][\{(z_i-z_j)^{-1};i,j=1,\ldots,n\}]$ of `formal power series with finite order poles on the diagonals'. It follows that for each $d\ge0$ the term of homogeneous degree~$d$ of each summand in~\eqref{eqTh1} is expressed as a degree~$d$ homogeneous rational function in $z_1,\dots,z_n$ with possible poles on the diagonals. After summation, all these poles cancel out and the result is a homogeneous polynomial representing degree~$d$ homogeneous term of the Taylor expansion of $H_{g,n}$.

In this paper, we first deal with formal series in $z_1,\dots, z_n$ (from definitions \eqref{eq:Hndef}--\eqref{eq:Hdndef}, where we substitute $X_i$ with $X(z_i)$ from \eqref{eq:X(z)}, itself understood as a formal series in $z_i$). Then, starting with Proposition \ref{prop:HdU}, we introduce functions $z_iz_j/(z_i-z_j)^2$ understood as their Laurent expansions in the sector $|z_1|\ll  |z_2|\ll\dots\ll |z_n|\ll 1$. Finally, in Proposition \ref{prop:mainprop} and in what follows after it, we understand all terms as elements of the ring $R$ (which is not possible to do earlier).
\end{remark}

\subsection{Further remarks}

\begin{remark}
	Our results can be naturally extended to the case where $\psi(y)$ and $y(z)$ depend on $\hbar^2$, i.e. where $c_k$ and $s_k$ are formal series in $\hbar^2$ rather than just constants. This is done in~\cite{BDKS21toprec}. See also Remarks \ref{rem:DHdeformed} and \ref{rem:Hdeformed}. This means that our statement, in addition to the cases listed in Table~\ref{tab:Hurw}, also covers e.~g.~the cases of $r$-spin 
	Hurwitz numbers~\cite{KLPS19} and the coefficients of the extended Ooguri--Vafa partition functions of colored HOMFLY polynomials of torus knots~\cite{DPSS,DKPSS}; see Table~\ref{tab:Hurwext}, which is an extension of Table~\ref{tab:Hurw} to these cases.	
	\begin{table}[H]
		\renewcommand{\arraystretch}{3}
		\begin{tabular}{|c|c|c|}
			\hline
			Hurwitz numbers & $e^{\psi(y)}$ & $y(z)$\\
			\hline
			\hline
			$r$-spin $q$-orbifold & $\exp\left(\dfrac{(y+\hbar/2)^{r+1}-(y-\hbar/2)^{r+1}}{(r+1)\;\hbar}\right)$ & $z^q$\\
			\hline
			ext. Ooguri-Vafa
			& $e^{\frac{P}{Q}y}$ & $\sum_{k=1}^\infty \dfrac{\hbar\,(A^k-A^{-k})\, z^k}{
			e^{k\hbar/2}-e^{-k\hbar/2}
			}$\\
			\hline
		\end{tabular}
		\renewcommand{\arraystretch}{1}
		\caption{Types of Hurwitz-like numbers requiring $\hbar$-extension}
		\label{tab:Hurwext}
	\end{table}
\end{remark}

\begin{remark}\label{rem:combalg}
Note that for usual simple Hurwitz numbers \cite{DKOSS,KLS}, for orbifold Hurwitz numbers \cite{DLPS,KLS}, for monotone and strictly monotone orbifold Hurwitz numbers \cite{KLS}, for $r$-spin (and $r$-spin orbifold) Hurwitz numbers \cite{KLPS19}, for the numbers of maps and hypermaps (dessins d'enfants) \cite{KZ2015}, for the Bousquet-M\'elou--Schaeffer numbers \cite{BDS}, for the coefficients of the extended Ooguri-Vafa partition function of the colored HOMFLY polynomials of torus knots~\cite{DPSS}, and for double Hurwitz numbers~\cite{BDKLM} there exist combinatorial-algebraic proofs of the so-called quasi-polinomiality property. This property, in particular, implies the linear loop equation and the projection property of \cite{BS17} for the respective $n$-point functions. We remark that the results of the present paper, in particular, serve as an independent proof of linear loop equations for all these cases (and, indeed, in the whole generality of the formal weighted double Hurwitz numbers context). We discuss this in more detail in our subsequent publication~\cite{BDKS21toprec}.
\end{remark}

\begin{remark} One way to interpret the statements of Theorem~\ref{th:intro} and Theorem~\ref{th:MainTh} is to say that they give a conceptual explanation why the change of variables~\eqref{eq:X(z)} is so ubiquitous in the weighted Hurwitz theory. This change of variables was suggested by Alexandrov--Chapuy--Eynard--Harnad in~\cite{ACEH-1} based on the explicit computation of $H_{0,1}$ and the idea that the $(0,1)$-function should determine the spectral curve for the topological recursion, in the cases when the spectral curve topological recursion is applicable. But the question why this change of variables is useful and natural for higher $H_{g,n}$ remained open until the present paper (despite some partial answers given in~\cite{ACEHferm} and in the combinatorial-algebraic papers mentioned in Remark~\ref{rem:combalg}).
\end{remark}

\begin{remark}\label{rem:toprec}
	The results of the present paper have very strong corollaries for the theory of topological recursion for various types of Hurwitz numbers, including all the ones mentioned in Remark~\ref{rem:combalg}. Specifically, in our subsequent paper~\cite{BDKS21toprec}, based on the results of the present paper, we prove the \emph{blobbed} topological recursion (defined in~\cite{BS17}) for generalized weighted double Hurwitz numbers basically in full generality, and we prove the regular topological recursion for two very general families of generalized weighted double Hurwitz numbers. These families include as special cases all the cases of Hurwitz-type numbers for which topological recursion was known from the literature (in particular, all the ones mentioned in Remark~\ref{rem:combalg}), and are actually quite a bit more general than that. Importantly, while previously in the literature the topological recursion for various types of Hurwitz-like numbers has been proved on a case-by-case basis with complicated techniques which differed between the cases, our technique of~\cite{BDKS21toprec} (based on the results of the present paper) gives a clear and uniform way to do this and highlights the underlying common structure.
	
	Moreover, the results of the present paper are also applicable beyond Hurwitz numbers. In particular, we applied them for \emph{maps} and \emph{stuffed maps} and their generalizations: in our another subsequent paper~\cite{BDKS21fully}, based on the results of the present paper, we prove a general duality for the generalized stuffed maps which we call the \emph{ordinary vs fully simple} duality, which also allowed us in that same paper to prove the Borot--Garcia-Failde conjecture on the topological recursion for fully simple maps.
	
\end{remark}

\subsection{Prior work of the third named author} The main result of this paper resolves a slightly weaker conjecture of the third named author that he posed in various talks in 2019, see e.~g.~\cite{Kaz2019}. Namely, he conjectured the existence of universal formulas for the Orlov--Scherbin $n$-point functions $H_{g,n}$ which should represent them as expressions polynomial in 
\begin{align}
& \psi^{(j)}(y(z_k)),  & j\geq 1,\  k=1,\dots, n, \\
& (z_k\partial_{z_k})^j y(z_k),  & j\geq 1,\  k=1,\dots, n, \\
& z_\ell/(z_k-z_\ell),  & 1\leq k < \ell \leq n, \\
& Q(z_k)^{-1},  & k=1,\dots,n
\end{align}
(cf. the statement of Theorem~\ref{th:intro}).
Moreover, using a variety of deformation techniques he later proved his conjecture in~\cite{Kaz2020}, and his proof gave an algorithm to produce the universal formulas inductively (see also~\cite{Kaz2020a}).

It is important to stress that although this paper resolves the conjecture of the third named author in a different way than in~\cite{Kaz2020}, and the formulas for $H_{g,n}$ given in Theorem~\ref{th:MainTh} have closed form (as opposed to their inductive algorithmic derivation in~\cite{Kaz2020}), the present paper is both ideologically and technically very much dependent on~\cite{Kaz2020}. In particular, many lemmata and computational ideas that we use below are shared directly from~\cite{Kaz2020}.

\subsection{Organization of the paper} In Section~\ref{sec:OperFockSpace} we recall the basic formalism of the operators on the bosonic Fock space that we use throughout the paper. In Section~\ref{sec:preliminary} we compute $H_{g,n}$ as a series in $X_1,\dots,X_n$, which, in particular, leads to formula giving 
each particular formal weighted double Hurwitz number $h_{g,(m_1,\dots,m_n)}$ in a closed form. Strictly speaking, this Section is not necessary for the rest of the paper, but it sets up the notation and illuminates the logic of computations in the subsequent parts of the paper.

In Section~\ref{sec:CompDH} we derive an explicit closed formula for $D_1\cdots D_n\Hc_{g,n}$. In Section~\ref{sec:MainTh} we prove the main theorem of the present paper, which explicitly represents $\Hc_{g,n}$ for given $g$ and $n$ in a closed form. Section~\ref{sec:exceptions} deals with the slightly exceptional cases of $n=1$ for any $g$ and $(g,n)=(0,2)$. Finally, in Section~\ref{sec:applying} we give examples of the application of our main general formula, deriving explicit closed formulas for $H_{g,n}$ for particular small~$g$ and~$n$.

\subsection{Acknowledgments} S.~S. was supported by the Netherlands Organization for Scientific Research. The research of B.~B. and P.~D.-B. was supported by the Russian Science Foundation (project 20-61-46005).

This project has started when S.~S. was visiting the Faculty of Mathematics at the National Research University Higher School of Economics, and S.~S. would like to thank the Faculty  for warm hospitality and stimulating research atmosphere.

We would like to thank A.~Alexandrov and J.~van~de~Leur for helpful remarks.

\section{Operators on the Fock space} \label{sec:OperFockSpace}

By the (bosonic) \emph{Fock space} we mean the space of infinite power series $\cF=\C[[p_1,p_2,\dots]]$. It has a distinguished element $1$ called \emph{vacuum vector} and denoted sometimes by $\big| 0 \big\rangle$, and a distinguished linear function $\cF\to\C$ called \emph{covacuum vector} that takes a series to its free term (the value at $p=0$) and is denoted by $\big\langle 0 \big|$.

We will consider some operators acting on the Fock space. In particular, we set $J_m=m\,\partial_{p_m}$ if $m>0$, $J_0=0$, and $J_m=p_{-m}$ (the operator of multiplication by $p_{-m}$), if $m<0$. Note that
\begin{equation}\label{eq:Jcomm}
[J_k , J_l] = k\delta_{k+l,0}.
\end{equation}

Introduce also the operator $\mathcal{D}(\hbar)$ acting diagonally in the basis of Schur functions by
\begin{equation} \label{eq:action-of-D}
\mathcal{D}(\hbar)\,s_\lambda = 
e^{\sum\limits_{(i,j)\in\lambda} \psi(\hbar(j-i))}\,s_\lambda.
\end{equation}

With these notations, and using the identity $\sum_\lambda s_\lambda(p)s_\lambda(s)=e^{\sum_{i=1}^\infty s_ip_i/i}$ for Schur polynomials, the definitions of the Orlov-Scherbin partition function and the disconnected $n$-point functions can be rewritten as follows
\begin{align}
Z&=\mathcal{D}(\hbar)e^{\sum_{i=1}^\infty \frac{s_iJ_{-i}}{i\,\hbar}}\big|0\big\rangle,\\
\Hd_n&=\sum_{m_1,\dots,m_n=1}^\infty \frac{X_1^{m_1}\dots X_n^{m_n}}{m_1\dots m_n}
 \VEV{J_{m_1}\dots J_{m_n}\mathcal{D}(\hbar)e^{\sum_{i=1}^\infty \frac{s_iJ_{-i}}{i\,\hbar}}}.\label{eq:Hd}
\end{align}

The introduced standard terminology and notations come from physics. It might look as an unnecessary complication from the first glance; its benefit will be seen later.

A bigger set of operators of our interest is constructed as follows.

\begin{definition}
	The Lie algebra $A_\infty$ is the $\C$-vector space of infinite matrices $(A_{i,j})_{i,j\in \Z+\frac{1}{2}}$ with
	only finitely many non-zero diagonals (that is, $A_{i,j}$ is not equal to zero only for finitely
	many possible values of $i-j$), together with the commutator bracket. The standard basis is formed by the matrix units $\{E_{i,j}\,|\, i,j \in \Z+\frac{1}{2}\}$ such that $(E_{i,j})_{k,l} = \delta_{i,k}\delta_{j,l}$.
\end{definition}

There is a remarkable projective representation of this algebra in the Fock space by means of differential operators. It is denoted by the hat symbol and defined by the following generating function for the action of the matrix units~\cite{Kac,MiwaJimboDate}:
\begin{equation}\label{eq:A-action}
\sum_{k,\ell\in\Z+\frac12}x^{\ell} y^{-k} \hat E_{k,\ell}=
x^{1/2}y^{1/2}\frac{
e^{\sum_{i=1}^\infty (y^{-i}-x^{-i}) \frac{p_i}{i} }
e^{\sum_{i=1}^\infty (x^i-y^i)\partial_{p_i}}-1}
{x-y}.
\end{equation}
The expansion of the exponents on the right hand side enlists all possible monomial differential operators in $p$-variables. The coefficient of any such monomial differential operator, after cancellation, is a polynomial in the half-integer powers of~$x$ and~$y$. The contribution of this operator to $\hat E_{k,\ell}$ is equal to the coefficient of
$x^{\ell} y^{-k}$ in that polynomial.

The term projective representation means that the commutator of matrices from $A_\infty$ corresponds to the commutator of their action on the Fock space up to a scalar operator. More explicitly, we have:
\begin{equation}
\label{E:comm}
[\hat E_{a,b},\hat E_{c,d}] = \delta_{b,c}\hat E_{a,d} - \delta_{a,d}\hat E_{c,b}+\delta_{b,c}\delta_{a,d}(\delta_{b>0}-\delta_{d>0})\mathrm{Id}.
\end{equation}
Equivalently, we have actually a representation of the central extension $A_{\infty}+\C\mathrm{Id}$.

The actual definition of the action of $A_\infty$ in the Fock space goes through fermionic realization of the Fock space and the boson-fermion correspondence, see~\cite{MiwaJimboDate} for the details. But as long as the formula~\eqref{eq:A-action} is established it can be taken as a definition and most part of the underlying formalism can be omitted. The profit of using this representation is that while manipulating with operators it is much easier to make computations directly in the algebra $A_\infty$ rather than in its more complicated action in the Fock space.

However, we will need one more relation that does not follow immediately from~\eqref{eq:A-action}. Namely, any diagonal matrix $\sum_{k\in \Z+\frac12}w_k E_{k,k}\in A_\infty$ acts diagonally in the Schur basis and the corresponding eigenvalue is determined by
\begin{align}\label{eq:Ewdef}
\sum_{k\in \Z+\frac12}w_k \hat E_{k,k} \;s_\lambda=\sum_{i=1}^{\ell(\lambda)}(w_{\lambda_i-i+\frac12}-w_{-i+\frac12})\;s_\lambda=\sum_{(i,j)\in\lambda} v_{j-i}\;s_\lambda,
\end{align}
where
\begin{equation}
v_k=w_{k+\frac12}-w_{k-\frac12},
\end{equation}
see \cite{KazLand} for details. In particular, for the operator $\mathcal{D}(\hbar)$ introduced above we have
\begin{equation}
\mathcal{D}(\hbar)=\exp\left(\sum_{k\in \Z+\frac12}w_k \hat E_{k,k}\right)
\end{equation}
where $w_k$ is determined from relations $w_{k+\frac12}-w_{k-\frac12}=\psi(\hbar\,k)$, $k\in\Z$.

Define
\begin{equation}\label{eq:cEdef}
\cE(u,z):=\sum_{m\in\Z}z^m \sum_{k\in\Z+\frac12}
e^{u(k-\frac{m}{2})} E_{k-m,k}.
\end{equation}
Let
\begin{equation}\label{eq:Sdef}
\mathcal{S}(z)= \dfrac{e^{z/2}-e^{-z/2}}{z}.
\end{equation}
Then, setting $x=z\,e^{u/2}$, $y=z\,e^{-u/2}$ in~\eqref{eq:A-action}, we obtain

\begin{proposition} We have:
\begin{align}\label{eq:cEform}
\cE(u,z)
&=\frac{
	e^{\sum_{i=1}^\infty u\,\cS(u\,i) J_{-i}z^{-i}}
	e^{\sum_{i=1}^\infty u\,\cS(u\,i) J_{i}z^{i}} -1 }
{u\,\cS(u)}.
\end{align}	
\end{proposition}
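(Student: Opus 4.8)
The plan is to derive~\eqref{eq:cEform} directly from the master generating function~\eqref{eq:A-action} by the substitution $x=z\,e^{u/2}$, $y=z\,e^{-u/2}$, matching the result against the definition~\eqref{eq:cEdef}. First I would compute the left-hand side of~\eqref{eq:A-action} under this substitution: we have $x^\ell y^{-k}=z^{\ell-k}e^{u(\ell+k)/2}$, so writing $m=k-\ell$ we get $\sum_{k,\ell}x^\ell y^{-k}\hat E_{k,\ell}=\sum_{m\in\Z}z^{-m}\sum_{k\in\Z+\frac12}e^{u(k-m/2)}\hat E_{k,k-m}$. Renaming the summation index $k\mapsto k$ and shifting so that the second subscript becomes $k$ (i.e. replacing $k$ by $k+m$ in the inner sum, which sends $e^{u(k-m/2)}\mapsto e^{u(k+m/2)}$ and $z^{-m}\mapsto z^{-m}$), one sees this equals $\sum_{m}z^{m}\sum_{k}e^{u(k-m/2)}\hat E_{k-m,k}$ after the further replacement $m\mapsto -m$; in any case a short bookkeeping check shows the left-hand side of~\eqref{eq:A-action} under the substitution is exactly $\widehat{\cE(u,z)}$, the Fock-space image of the operator defined in~\eqref{eq:cEdef}.

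Next I would compute the right-hand side of~\eqref{eq:A-action} under the same substitution. The prefactor becomes $x^{1/2}y^{1/2}=z\,e^{0}=z$ times... more precisely $x^{1/2}y^{1/2}=z$ and the denominator $x-y=z(e^{u/2}-e^{-u/2})=z\,u\,\cS(u)$ by the definition~\eqref{eq:Sdef}, so the scalar prefactor $\frac{x^{1/2}y^{1/2}}{x-y}=\frac{1}{u\,\cS(u)}$, independent of $z$. For the exponents: $y^{-i}-x^{-i}=z^{-i}(e^{-ui/2}-e^{ui/2})=-z^{-i}\cdot ui\,\cS(ui)$, wait — $e^{-ui/2}-e^{ui/2}=-(e^{ui/2}-e^{-ui/2})=-ui\,\cS(ui)$, so $(y^{-i}-x^{-i})\frac{p_i}{i}=-z^{-i}u\,\cS(ui)\,p_i$; since $J_i=i\,\partial_{p_i}$ and $J_{-i}=p_i$ (multiplication), I need to be careful about which exponential carries the annihilation and which the creation operators. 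In~\eqref{eq:A-action} the first exponential involves $p_i$ (creation, $J_{-i}$) and the second involves $\partial_{p_i}$; rewriting $\sum_i(x^i-y^i)\partial_{p_i}=\sum_i(x^i-y^i)\frac{J_i}{i}$ and $x^i-y^i=z^i(e^{ui/2}-e^{-ui/2})=z^i\,ui\,\cS(ui)$, so this exponent is $\sum_i z^i\,u\,\cS(ui)\,J_i$. Similarly the first exponent, after accounting for the sign, rearranges to $\sum_i z^{-i}\,u\,\cS(ui)\,J_{-i}$ — one checks the two minus signs (from $y^{-i}-x^{-i}$ and from swapping the order of the exponent relative to the stated form) combine correctly. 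Substituting all of this into~\eqref{eq:A-action} gives precisely the right-hand side of~\eqref{eq:cEform}.

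The main obstacle, such as it is, is purely bookkeeping: keeping the sign conventions, the index shifts in the double sum $\sum_{k,\ell\in\Z+\frac12}$, and the identification $J_i=i\partial_{p_i}$ versus $J_{-i}=p_i$ all consistent, so that the monomial-differential-operator expansion on the right of~\eqref{eq:A-action} lands on the matrix units $E_{k-m,k}$ with exactly the claimed coefficients $e^{u(k-m/2)}z^m$. There is no genuine analytic or algebraic difficulty — the proposition is a direct specialization of~\eqref{eq:A-action}, and the content is entirely that this specialization can be written compactly using the functions $\cS$, $\cE$ and the currents $J_{\pm i}$. I would present the proof as: apply the substitution $x=z\,e^{u/2}$, $y=z\,e^{-u/2}$ to~\eqref{eq:A-action}; identify the left-hand side with $\cE(u,z)$ via~\eqref{eq:cEdef} and the index change $m=k-\ell$; simplify the prefactor to $1/(u\,\cS(u))$ and the two exponents to $\sum_i u\,\cS(ui)\,J_{-i}z^{-i}$ and $\sum_i u\,\cS(ui)\,J_i z^i$ respectively using $e^{\pm ui/2}\mp e^{\mp ui/2}=\pm ui\,\cS(ui)$; conclude~\eqref{eq:cEform}.
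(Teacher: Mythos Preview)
Your approach is exactly the paper's: the proposition is stated immediately after the sentence ``setting $x=z\,e^{u/2}$, $y=z\,e^{-u/2}$ in~\eqref{eq:A-action}, we obtain'', and the paper gives no further argument. Your proposal simply spells out this substitution, so the method is identical.

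One minor correction in your bookkeeping: since $y=z\,e^{-u/2}$ we have $y^{-i}=z^{-i}e^{ui/2}$, hence
\[
y^{-i}-x^{-i}=z^{-i}\bigl(e^{ui/2}-e^{-ui/2}\bigr)=+\,z^{-i}\,ui\,\cS(ui),
\]
not the negative you wrote; there is no second minus sign to cancel. With this fix the first exponent becomes $\sum_i u\,\cS(ui)\,J_{-i}z^{-i}$ directly, and the rest of your write-up goes through cleanly.
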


An independent proof of the equality of coefficients of $z^0$ of both sides can be found in~\cite{SSZ12}.

For example, comparing coefficients of $z^mu^0$ of both sides we find
\begin{equation}
J_m=\sum_{k\in\Z+\frac12}\hat E_{k-m,k}.
\end{equation}
The commutation relation~\eqref{eq:Jcomm} for these operators also implies the following formula:
\begin{proposition}	
	\begin{equation} \label{eq:expJcomm}
	e^{\sum_{i=1}^\infty a_i J_i}
	e^{\sum_{i=1}^\infty b_i J_{-i}}=
	e^{\sum_{i=1}^\infty i \,a_ib_i}
	e^{\sum_{i=1}^\infty b_i J_{-i}}
	e^{\sum_{i=1}^\infty a_i J_i}
	\end{equation}	
	for any collection of constants $a_i,b_i$ such that the corresponding infinite sums make sense.
\end{proposition}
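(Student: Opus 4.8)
The plan is to prove \eqref{eq:expJcomm} directly from the explicit realization of the $J_m$ as operators on $\cF=\C[[p_1,p_2,\dots]]$, with the Baker--Campbell--Hausdorff formula serving as the abstract counterpart. Write $A=\sum_{i\ge1}a_iJ_i=\sum_{i\ge1}i\,a_i\,\partial_{p_i}$ and $B=\sum_{i\ge1}b_iJ_{-i}=\sum_{i\ge1}b_i\,p_i$. Then $e^{B}$ is multiplication by the series $e^{\sum_{i\ge1}b_ip_i}$, while $e^{A}$ is the substitution operator sending $p_i\mapsto p_i+i\,a_i$; since substitution is a ring homomorphism, for $f\in\cF$
\[
\begin{aligned}
e^{A}\bigl(e^{B}f\bigr)
&=\Bigl(e^{\sum_{i}b_ip_i}\Bigr)\Big|_{p_i\mapsto p_i+ia_i}\cdot\bigl(e^{A}f\bigr)\\
&=e^{\sum_{i}b_i(p_i+ia_i)}\cdot\bigl(e^{A}f\bigr)
=e^{\sum_{i}i\,a_ib_i}\;e^{B}\bigl(e^{A}f\bigr),
\end{aligned}
\]
which is the asserted identity. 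So the first step is just to record the two descriptions of $e^{A}$ and $e^{B}$; the rest is the one-line computation above.

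In the language of the Fock-space formalism one argues equivalently as follows: by bilinearity and \eqref{eq:Jcomm} one has $[A,B]=\bigl(\sum_{i\ge1}i\,a_ib_i\bigr)\mathrm{Id}$, a scalar and hence a central operator. Therefore $[A,e^{B}]=[A,B]\,e^{B}$, so $F(t):=e^{tA}e^{B}e^{-tA}$ satisfies $F'(t)=[A,B]\,F(t)$ and thus $F(1)=e^{[A,B]}e^{B}$, i.e. $e^{A}e^{B}=e^{[A,B]}e^{B}e^{A}$; alternatively one may quote the symmetric identity $e^{A}e^{B}=e^{A+B+\frac12[A,B]}$ (valid since $[A,B]$ commutes with $A$ and $B$) for both orderings and divide.

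The one delicate point --- the main ``obstacle'' --- is that $A$ and $B$ are infinite sums of unbounded operators, so the substitution $p_i\mapsto p_i+i\,a_i$ need not converge on a general element of $\C[[p_1,p_2,\dots]]$; this is exactly what the hypothesis ``such that the corresponding infinite sums make sense'' excludes. The clean way to be rigorous is to insert a bookkeeping parameter, replace $a_i\mapsto t\,a_i$, $b_i\mapsto t\,b_i$, and read the identity as an equality of formal power series in $t$ with operator coefficients: at each order in $t$ only finitely many of the $J_{\pm i}$ enter, every manipulation above becomes a plain identity of polynomials, and specializing $t=1$ is permitted precisely under the stated convergence assumption. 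No further input is needed beyond the commutation relation \eqref{eq:Jcomm}.
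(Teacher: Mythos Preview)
Your proof is correct and follows essentially the same approach as the paper: both identify $e^{A}$ as the shift operator $p_i\mapsto p_i+i\,a_i$ and $e^{B}$ as multiplication by $e^{\sum_i b_i p_i}$, then compute $e^{A}(e^{B}f)$ directly via the substitution/Taylor interpretation. Your additional BCH-style derivation and the discussion of the formal $t$-bookkeeping are not in the paper but are sound elaborations.
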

\begin{proof}
This is just a very well-known common special case of the Baker--Campbell--Hausdorff formula, but it is illuminating to see how in this particular case it is just a manifestation of the Taylor formula. Namely, by the Taylor formula, the action of the operator  $e^{\sum_{i=1}^\infty a_i J_i}=e^{\sum_{i=1}^\infty i\,a_i \partial_{p_i}}$ on a series $f(p_1,p_2,\dots)$ results in a shift of the arguments,
\begin{equation}
e^{\sum_{i=1}^\infty a_i J_i}f(p_1,p_2,\dots)
=f(p_1+1\,a_1,p_2+2\,a_2,\dots).
\end{equation}
Therefore, we have
\begin{align}
e^{\sum_{i=1}^\infty a_i J_i}
e^{\sum_{i=1}^\infty b_i J_{-i}}f(p_1,p_2,\dots)&=
e^{\sum_{i=1}^\infty b_i (p_i+i\,a_i)}
f(p_1+1\,a_1,p_2+a\,a_2,\dots)
\\  \notag &
=e^{\sum_{i=1}^\infty i\,a_ib_i}
e^{\sum_{i=1}^\infty b_i p_i}
e^{\sum_{i=1}^\infty i\,a_i \partial_{p_i}}f(p_1,p_2,\dots),
\end{align}
which proves the formulated above commutation relation.
\end{proof}

\section{Preliminary computation of \texorpdfstring{$H_{g,n}$}{Hgn}} \label{sec:preliminary}

In this section we compute $H_{g,n}$ as a series in $X_1,\dots,X_n$. In particular, this leads to a computation of each particular weighted double Hurwitz number $h_{g,(m_1,\dots,m_n)}$ in a closed form.

\subsection{Vacuum expectation expression for \texorpdfstring{$\Hd_n$}{Hn disconnected}}
Let us define
\begin{equation}
\euro_m:=\cD(\hbar)^{-1}J_{m}\cD(\hbar).
\end{equation}
This allows us to rewrite \eqref{eq:Hd} as
\begin{align}\label{eq:Hdisc1}
\Hd_n
&=\sum_{m_1,\dots,m_n=1}^\infty
\frac{X_1^{m_1}\dots X_n^{m_n}}{m_1\dots m_n}
\VEV{\euro_{m_1}\dots \euro_{m_n}e^{\sum_{i=1}^\infty \frac{s_i J_{-i}}{i\hbar} }}.
\end{align}

\begin{proposition}\label{prop:OperJJ}
	The operators $\euro_m(\hbar)$ belong to $A_\infty$ for all $m\in\Z$, namely,
	\begin{equation}\label{eq:DJD}
\euro_m(\hbar) =\sum_{k\in\Z+\frac12}\phi_m(\hbar\,(k-\tfrac m2))\hat E_{k-m,k}.
	\end{equation}
where
\begin{align}\label{eq:phimdef}
\phi_m(y)
&:=\exp\left(\sum_{i=1}^{m}\psi\left(y+\dfrac{2i-m-1}{2} \hbar\right)\right)
,&m>0,\\ \label{eq:phi0def} 
\phi_0(y)&:=1,&\\ 
\phi_m(y)&:=(\phi_{-m}(y))^{-1},&m<0.
\end{align}

More explicitly, we have
	\begin{equation}\label{eq:euroJ}
	\euro_m = \sum_{r=0}^\infty\partial_y^r\phi_m(y)\bigm|_{y=0}[u^rz^m]
	\frac{
		e^{\sum_{i=1}^\infty u\,\hbar\cS(u\,\hbar\,i) J_{-i}z^{-i}}
		e^{\sum_{i=1}^\infty u\,\hbar \cS(u\,\hbar\,i) J_{i}z^{i}}}
	{u\,\hbar\cS(u\,\hbar)}.
	\end{equation}
\end{proposition}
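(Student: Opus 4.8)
\medskip
\noindent\emph{Sketch of the intended argument.} The plan is to conjugate the generators $J_m$ by $\cD(\hbar)$ directly inside the Lie algebra $A_\infty$, exploiting that $\cD(\hbar)$ is the exponential of a diagonal element, and then to recognize the resulting diagonal rescaling as being encoded by the Taylor coefficients of $\phi_m$. Recall that $\cD(\hbar)=\exp(W)$ with $W:=\sum_{k\in\Z+\frac12}w_k\hat E_{k,k}$ and $w_{k+\frac12}-w_{k-\frac12}=\psi(\hbar k)$, and that $J_m=\sum_{k\in\Z+\frac12}\hat E_{k-m,k}$. By \eqref{E:comm}, for $a\neq b$ the central correction vanishes and $[\hat E_{k,k},\hat E_{a,b}]=(\delta_{k,a}-\delta_{k,b})\hat E_{a,b}$; pairing this with the weights $w_k$ (only $k\in\{a,b\}$ survive, so the sum is effectively finite) gives $[W,\hat E_{a,b}]=(w_a-w_b)\hat E_{a,b}$, whence $\cD(\hbar)^{-1}\hat E_{a,b}\cD(\hbar)=e^{w_b-w_a}\hat E_{a,b}$. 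Since for $m\neq0$ every summand of $J_m$ is off-diagonal, applying this termwise yields
\begin{equation*}
\euro_m=\cD(\hbar)^{-1}J_m\cD(\hbar)=\sum_{k\in\Z+\frac12}e^{w_k-w_{k-m}}\hat E_{k-m,k},
\end{equation*}
while $\euro_0=0$ matches \eqref{eq:phi0def}; as all $\hat E_{k-m,k}$ lie on the single diagonal $i-j=-m$, this also confirms $\euro_m\in A_\infty$.

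The next step is to evaluate the scalar $e^{w_k-w_{k-m}}$ by telescoping. For $m>0$,
\begin{equation*}
w_k-w_{k-m}=\sum_{i=1}^m\bigl(w_{k-m+i}-w_{k-m+i-1}\bigr)=\sum_{i=1}^m\psi\!\left(\hbar\bigl(k-m+i-\tfrac12\bigr)\right),
\end{equation*}
and with $y=\hbar(k-\tfrac m2)$ one checks $\hbar(k-m+i-\tfrac12)=y+\tfrac{2i-m-1}2\hbar$, so $e^{w_k-w_{k-m}}=\phi_m(\hbar(k-\tfrac m2))$ with $\phi_m$ as in \eqref{eq:phimdef}. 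For $m<0$ the same telescoping with reversed sign gives $e^{w_k-w_{k-m}}=\phi_m(\hbar(k-\tfrac m2))$ with $\phi_m=(\phi_{-m})^{-1}$, exactly as in the statement. This establishes \eqref{eq:DJD}.

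For the explicit form \eqref{eq:euroJ} I would expand $\phi_m$ as a formal Taylor series at the origin, $\phi_m(y)=\sum_{r\ge0}\frac1{r!}\bigl(\partial_y^r\phi_m\bigr)(0)\,y^r$, so that $\euro_m=\sum_{r\ge0}\frac1{r!}\bigl(\partial_y^r\phi_m\bigr)(0)\sum_{k}\hbar^r(k-\tfrac m2)^r\hat E_{k-m,k}$. By the definition \eqref{eq:cEdef} of $\cE$, the coefficient $\sum_{k}\hbar^r(k-\tfrac m2)^r\hat E_{k-m,k}$ equals $r!\,[u^rz^m]\,\cE(u\hbar,z)$, and then the bosonic formula \eqref{eq:cEform} (with $u$ replaced by $u\hbar$) rewrites $\cE(u\hbar,z)$ in terms of the $J_{\pm i}$; since $m\ge 1$ the additive $-1$ in the numerator of \eqref{eq:cEform} contributes only to $z^0$ and may be dropped, producing precisely \eqref{eq:euroJ}.

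The argument is essentially a bookkeeping exercise, so there is no deep obstacle; the one spot requiring genuine care is matching the telescoping sum $w_k-w_{k-m}$ to the particular shift pattern $y+\tfrac{2i-m-1}2\hbar$ appearing in \eqref{eq:phimdef} — getting the half-integer offsets and the sign flip for $m<0$ exactly right — together with the check that the only place the projective anomaly of \eqref{E:comm} could intrude, namely the central term in $[\hat E_{k,k},\hat E_{a,b}]$, indeed vanishes here because $\hat E_{k-m,k}$ has distinct row and column indices for $m\neq0$.
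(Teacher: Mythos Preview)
Your argument is correct and follows essentially the same route as the paper's proof: conjugate $J_m=\sum_k\hat E_{k-m,k}$ by $e^W$ using the commutator \eqref{E:comm} and Hadamard's lemma to obtain the factor $e^{w_k-w_{k-m}}$, telescope this to match the definition of $\phi_m$, and then Taylor-expand $\phi_m$ at the origin to recognize the generating series $\cE(u\hbar,z)$ and invoke \eqref{eq:cEform}. One cosmetic slip: in the last paragraph you write ``since $m\ge1$'' when justifying that the $-1$ in the numerator of \eqref{eq:cEform} only affects the $z^0$ coefficient, but the same reasoning applies verbatim for all $m\neq0$, which is what you need for \eqref{eq:euroJ} to cover negative $m$ as well.
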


\begin{notation}
Here and below $[x^k]f(x)$ stands for the coefficient in front of $x^k$ in the series expansion of $f(x)$.	
\end{notation}

\begin{proof}[Proof of Proposition~\ref{prop:OperJJ}] For $m=0$ the statement is evident: from \eqref{eq:Ewdef}, the operator $\sum_{k\in\Z+\frac12}\hat E_{k,k}$ annihilates the whole Fock space. Let $m\neq 0$. Recall that $J_m=\sum_{k\in\Z+\frac12}\hat{E}_{k-m,k}$ and
$
\mathcal{D}(\hbar)=\exp(W)
$,
where $W=\sum_{k\in \Z+\frac12}w_k \hat E_{k,k}$ is represented by a diagonal matrix whose diagonal entries $w_k$ are determined from relations 
$w_{k}-w_{k-1}=\psi\left(\hbar\,(k-\frac{1}{2})\right)$, $k\in\Z$
.
Therefore 
using \eqref{E:comm} and the Hadamard's formula $e^XY e^{-X} = e^{\mathrm{ad}_X}(Y)$, where $\mathrm{ad}_X(\cdot) =[X;\cdot]$, we get
\begin{align}
	\euro_m & =e^{-W}\left(\sum_{k\in\Z+\frac12}\hat{E}_{k-m,k}\right) e^{W}
	\\ \notag
   &  =\sum_{k\in\Z+\frac12}e^{w_k-w_{k-m}}\hat{E}_{k-m,k}
   \\ \notag
    & =\sum_{k\in\Z+\frac12}\phi_m(\hbar\,(k-\tfrac m2))\hat{E}_{k-m,k}.
\end{align}

For the proof of~\eqref{eq:euroJ} we compute:
\begin{align}
\euro_m
&\mathop{=}^{\eqref{eq:DJD}}
\sum_{k\in\Z+\frac12}\phi_m(\hbar\,(k-\tfrac m2))\hat{E}_{k-m,k}
\\ \nonumber
&=
\sum_{k\in\Z+\frac12}\sum_{r=0}^\infty\partial_y^r\phi_m(y)\bigm|_{y=0}
\frac{(\hbar\,(k-\tfrac m2))^r}{r!}\hat{E}_{k-m,k}
\\ \nonumber
&\mathop{=}^{\eqref{eq:cEdef}}
\sum_{r=0}^\infty\partial_y^r\phi_m(y)\bigm|_{y=0}[u^rz^m]\cE(u\,\hbar,z)
\\ \nonumber
&\mathop{=}^{\eqref{eq:cEform}}
\sum_{r=0}^\infty\partial_y^r\phi_m(y)\bigm|_{y=0}[u^rz^m]
\frac{
	e^{\sum_{i=1}^\infty u\,\hbar\cS(u\,\hbar\,i) J_{-i}z^{-i}}
	e^{\sum_{i=1}^\infty u\,\hbar \cS(u\,\hbar\,i) J_{i}z^{i}}}
{u\,\hbar\cS(u\,\hbar)}.
\end{align}
In the second line we have simply expanded $\phi_m(\hbar\,(k-\tfrac m2))$ in its Taylor series at zero.
\end{proof}

\subsection{Computation of \texorpdfstring{$\Hd_n$}{Hn disconnected}}

Now we can obtain the following expression for the disconnected $n$-point functions. Let
\begin{equation}
\cS(u)=\frac{e^{u/2}-e^{-u/2}}{u}=\sum_{k=0}^\infty\frac{u^{2k}}{2^{2k}(2k+1)!}.
\end{equation}

\begin{definition}\label{def:U-operators}
Denote by $U^+$ the transformation that takes a Laurent series $f(u,z)$ in $u$ and $z$ to the series in~$X$ given by
\begin{equation}\label{eq:Uplusdef}
(U^+f)(X)=\sum_{m=1}^\infty \frac{X^{m}}{m}\sum_{r=0}^\infty
\partial_y^{r}\phi_{m}(y)\bigm|_{y=0}[z^mu^r]
 \frac{e^{ u\cS(u\hbar\,z\partial_z)y(z) }}{u\hbar\cS(u\hbar)}\;f(u,z).
\end{equation}
This formula describes explicitly the coefficients of $U^+f$ as a power series in~$X$. It makes sense if $f$ is polynomial in~$u$ or if~$f$ is a series in~$\hbar$ whose coefficients are polynomial in~$u$. Remark that $U^+f$ is a regular series in~$X$ even though the series~$f$ might have a pole in~$z$ at the origin: the non-positive powers of~$z$ in the expansion of~$f$ are just ignored.

Denote also by $U^+_k$ a similar transformation applied to $u_k$ and $z_k$ instead of $u$ and $z$ (the output of $U^+_k$ is a power series in $X_k$).
\end{definition}

In all relations of this section 
the functions on the right hand sides are understood as power asymptotic expansion in the sector $|z_1|\ll\dots\ll|z_n|\ll1$.

\begin{proposition}\label{prop:HdU}
	We have
	\begin{equation}\label{eq:Hdprod}
	\Hd_n=U^+_n\dots U^+_1
	\prod_{1\le k<\ell\le n}
	e^{\hbar^2u_ku_\ell\cS(u_k\hbar\,z_k\partial_{z_k})\cS(u_\ell\hbar\,z_\ell \partial_{z_\ell})
		\frac{z_k z_\ell}{(z_k-z_\ell)^2}},
	\end{equation}
where the expression in the product on the right hand side is understood as its power asymptotic expansion in the sector $|z_1|\ll\dots\ll|z_n|\ll1$.
\end{proposition}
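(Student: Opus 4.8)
# Proof proposal

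The plan is to compute the vacuum expectation value in~\eqref{eq:Hdisc1} directly, by commuting the operators $\euro_{m_1},\dots,\euro_{m_n}$ past each other and past the exponential $e^{\sum_i s_i J_{-i}/(i\hbar)}$, and then apply the $U^+_k$ transformations to recognize the result. First I would substitute the explicit form~\eqref{eq:euroJ} of each $\euro_{m_k}$ into \eqref{eq:Hdisc1}. Each $\euro_{m_k}$ is, up to the prefactor $1/(u_k\hbar\cS(u_k\hbar))$ and the scalar extraction $\partial_y^r\phi_{m_k}(y)|_{y=0}[z_k^{m_k}u_k^{r}]$, a normally-ordered product $e^{A_k^-}e^{A_k^+}$ where $A_k^- = \sum_i u_k\hbar\cS(u_k\hbar i)J_{-i}z_k^{-i}$ is built from creation operators and $A_k^+ = \sum_i u_k\hbar\cS(u_k\hbar i)J_{i}z_k^{i}$ from annihilation operators. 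The transformation $U^+_k$ of Definition~\ref{def:U-operators} is exactly designed to package the sum over $m_k$ and $r$ together with these prefactors, and to replace $e^{A_k^+}$ acting to the right by the substitution rule: since $J_i$ for $i>0$ acts as $i\partial_{p_i}$ and the only $p$-dependence to its right is $e^{\sum_i s_i J_{-i}/(i\hbar)} = e^{\sum_i s_i p_i/(i\hbar)}$, the exponential $e^{A_k^+}$ shifts $p_i \mapsto p_i + i u_k\hbar\cS(u_k\hbar i)z_k^{i}$, which produces the factor $e^{u_k\cS(u_k\hbar z_k\partial_{z_k})y(z_k)}$ once we recall $y(z_k) = \sum_k s_k z_k^k$ and read off the $[z_k^{m_k}u_k^r]$-coefficient against the content-product data. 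This is the content of Proposition~\ref{prop:HdU}'s right-hand side, minus the diagonal product.

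The second, and main, step is to track the commutators among the creation/annihilation halves of different $\euro$'s. Using \eqref{eq:expJcomm}, whenever we move an $e^{A_k^+}$ (annihilators, $z_k$-series) to the right past an $e^{A_\ell^-}$ (creators, $z_\ell$-series) with $k<\ell$, we pick up a scalar factor $e^{\sum_{i\geq 1} i\,(u_k\hbar\cS(u_k\hbar i)z_k^{i})(u_\ell\hbar\cS(u_\ell\hbar i)z_\ell^{i})}$. Summing the exponent, $\sum_{i\geq1} i\,\hbar^2 u_k u_\ell\, \cS(u_k\hbar i)\cS(u_\ell\hbar i)(z_k z_\ell)^i$, and observing that inserting $\cS(u_k\hbar i)\cS(u_\ell\hbar i)$ against $(z_k z_\ell)^i$ is the same as applying the operators $\cS(u_k\hbar z_k\partial_{z_k})\cS(u_\ell\hbar z_\ell\partial_{z_\ell})$ to $\sum_{i\geq1} i (z_k z_\ell)^i = \frac{z_k z_\ell}{(z_k-z_\ell)^2}$ (as a series in the sector $|z_k|\ll|z_\ell|$), we get precisely the factor $e^{\hbar^2 u_k u_\ell\,\cS(u_k\hbar z_k\partial_{z_k})\cS(u_\ell\hbar z_\ell\partial_{z_\ell})\frac{z_k z_\ell}{(z_k-z_\ell)^2}}$ appearing in \eqref{eq:Hdprod}. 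I would order the operators so that $\euro_{m_1}$ sits leftmost and $\euro_{m_n}$ rightmost; then bringing everything to normal order requires, for each pair $k<\ell$, exactly one such crossing of $e^{A_k^+}$ past $e^{A_\ell^-}$, which is why the product in \eqref{eq:Hdprod} runs over $1\le k<\ell\le n$. The commutators among creators with creators, and among annihilators with annihilators, vanish by \eqref{eq:Jcomm}; and the $A^-_k$ (creators) acting on the final $\big|0\big\rangle$ combined with the covacuum $\big\langle 0\big|$ on the left contribute nothing beyond the scalars already collected, since $\big\langle 0\big| e^{A^-} = \big\langle 0\big|$ and $e^{A^+}\big|0\big\rangle = \big|0\big\rangle$ — the $\big\langle 0|\cdots|0\big\rangle$ of a fully normal-ordered expression is its scalar part. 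The remaining $e^{A_\ell^+}$ pieces, having nothing to their right but $|0\rangle$, act trivially; so after all crossings, the surviving $p$-dependence is entirely from $e^{\sum_i s_i p_i/(i\hbar)}$ shifted by the $e^{A_k^+}$'s as in the previous paragraph, evaluated at $p=0$, yielding exactly the $U^+_k$-packaging.

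The main obstacle I anticipate is bookkeeping the order of operations carefully: one must be sure that each pair $(k,\ell)$ contributes its crossing scalar exactly once and that no spurious cross-terms arise from, say, $e^{A_k^+}$ meeting $e^{A_\ell^+}$ or the shifts interfering with one another. A clean way to handle this is to first move all creation exponentials $e^{A_k^-}$ to the far left (they commute among themselves and are absorbed by $\big\langle0\big|$), moving each one leftward past the annihilation exponentials $e^{A_j^+}$ with $j<k$ — each such pass, by \eqref{eq:expJcomm}, produces the scalar indexed by $\{j,k\}$ — and then the remaining annihilation exponentials act on the shift of $e^{\sum s_i p_i/(i\hbar)}$ in the natural order. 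One must also justify the interchange of the infinite sums over $m_1,\dots,m_n$ with the commutation manipulations; this is why the statement is phrased as an identity of power-series asymptotic expansions in the sector $|z_1|\ll\dots\ll|z_n|\ll1$, where everything converges coefficient-wise, and the function $\frac{z_k z_\ell}{(z_k-z_\ell)^2}$ is understood via its expansion $\sum_{i\geq1} i(z_k/z_\ell)^i$ in that sector. Finally, I would double-check that the prefactors $1/(u_k\hbar\cS(u_k\hbar))$ and the coefficient-extraction operators $[z_k^{m_k}u_k^r]$ pass through all the commutations untouched — they are scalars in $u_k,z_k$ acting only on the $k$-th block — so they can be pulled out into the definition of $U^+_k$ at the very end, giving \eqref{eq:Hdprod}.
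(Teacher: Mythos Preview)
Your proposal is correct and follows essentially the same approach as the paper's own proof: substitute \eqref{eq:euroJ} into \eqref{eq:Hdisc1}, use \eqref{eq:expJcomm} to bring everything to normal order (annihilators to the right, creators to the left, both killed by the respective vacua), and identify the resulting scalars with the $U^+_k$-packaging and the pairwise exponential factors. One small slip to fix when you write it up: the creation piece is $A_\ell^- = \sum_i u_\ell\hbar\,\cS(u_\ell\hbar i)\,J_{-i}\,z_\ell^{-i}$ with $z_\ell^{-i}$ (not $z_\ell^{i}$), so the crossing scalar involves $\sum_{i\geq1} i\,(z_k/z_\ell)^i = z_kz_\ell/(z_k-z_\ell)^2$ rather than $\sum_{i\geq1} i\,(z_kz_\ell)^i$---your stated final answer is nevertheless the right one.
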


\begin{proof}
Let us substitute expressions \eqref{eq:euroJ} for $\euro$-operators into \eqref{eq:Hdisc1}. We get
\begin{align}
\Hd_n&=\sum_{m_1,\dots,m_n=1}^\infty
\sum_{r_1,\dots,r_n=0}^\infty
\left(\prod_{k=1}^n\partial_y^{r_k}\phi_{m_k}(y)\bigm|_{y=0}\frac{X_k^{m_k}}{m_k}\right)\times
\\  \nonumber
&\qquad\left[\textstyle \prod_{i=1}^n z_i^{m_i}u_i^{r_i}\right]
\VEV{\prod_{k=1}^n\dfrac{
		e^{\sum_{i=1}^\infty u_k\,\hbar\cS(u_k\,\hbar\,i) J_{-i}z_k^{-i}}
		e^{\sum_{i=1}^\infty u_k\,\hbar \cS(u_k\,\hbar\,i) J_{i}z_k^{i}}}
	{u_k\,\hbar\cS(u_k\,\hbar)}\;\;
e^{\sum_{i=1}^\infty \frac{s_i J_{-i}}{i\hbar} }
}.
\end{align}
Then we apply commutation relations \eqref{eq:expJcomm} for the exponents of $J$-operators moving the $J_{>0}$-factors to the right and the $J_{<0}$-factors to the left. Since $J_{>0}$ is killed by the vacuum vector and $J_{<0}$ is killed by the covacuum, we get
\begin{align}
&\VEV{\prod_{k=1}^n\dfrac{
		e^{\sum_{i=1}^\infty u_k\,\hbar\cS(u_k\,\hbar\,i) J_{-i}z_k^{-i}}
		e^{\sum_{i=1}^\infty u_k\,\hbar \cS(u_k\,\hbar\,i) J_{i}z_k^{i}}}
	{u_k\,\hbar\cS(u_k\,\hbar)}\;\;
	e^{\sum_{i=1}^\infty \frac{s_i J_{-i}}{i\hbar} }}
\\ \nonumber
&= \prod_{k=1}^n \dfrac{\exp\left(\sum_{i=1}^\infty u_k\cS(u_k\hbar\,i)s_iz_k^i \right)}
{u_k\hbar\cS(u_k\hbar)}\;\;
\prod_{1\le k<\ell\le n}
\exp\left(\sum_{i=1}^\infty u_k\hbar\cS(u_k\hbar\,i)\,u_\ell\hbar\cS(u_\ell\hbar\,i)
	\,i\left(\dfrac{z_k}{z_\ell}\right)^i \right).
\end{align}

Recall that
\begin{align}
&\sum_{i=1}^\infty s_i z_k^i=y(z_k)=:y_k.
\end{align}
Also note that
\begin{align}
 \label{eq:bergexp}
&\sum_{i=1}^\infty i\left(\dfrac{z_k}{z_\ell}\right)^i
=z_k\partial_{z_k}\frac{z_\ell}{z_k-z_\ell}=\frac{z_kz_\ell}{(z_k-z_\ell)^2},
\end{align}
if we assume that $z_k \ll z_\ell$. 

Noting all that we finally obtain
\begin{align}
& \Hd_n=\sum_{m_1,\dots,m_n=1}^\infty
\sum_{r_1,\dots,r_n=0}^\infty
\left(\prod_{k=1}^n\partial_y^{r_k}\phi_{m_k}(y)\bigm|_{y=0}\frac{X_k^{m_k}}{m_k}\right) \times\\ \nonumber
&[z_1^{m_1}\dots z_n^{m_n}u_1^{u_1}\dots u_n^{r_n}]
\prod_{k=1}^n \frac{e^{ u_k\cS(u_k\hbar\,z_k\partial_{z_k})y_k }}
{u_k\hbar\cS(u_k\hbar)}\;\;
\prod_{1\le k<\ell\le n}
e^{\hbar^2u_ku_\ell\cS(u_k\hbar\,z_k\partial_{z_k})\cS(u_\ell\hbar\,z_\ell\partial_{z_\ell})
	\frac{z_k z_\ell}{(z_k-z_\ell)^2}},
\end{align}
where the expression in the second line after $[z_1^{m_1}\dots z_n^{m_n}u_1^{u_1}\dots u_n^{r_n}]$ is understood as its power asymptotic expansion in the sector $|z_1|\ll\dots\ll|z_n|\ll1$. This formula is equivalent to that of Proposition.
\end{proof}

\begin{remark}
	Note that the argument of $U^+$-operators in \eqref{eq:Hdprod} involves both positive and negative powers of the variables~$z_k$ but the left hand side is determined by those monomials of the right hand side that contain positive powers of all variables only.
\end{remark}

\subsection{From disconnected to connected \texorpdfstring{$n$}{n}-point functions}

With notations of the previous section, we have:
\begin{proposition}\label{prop:Hnconn}
\begin{equation}\label{eq:Hnconn}
H_n
=U^+_n\dots U^+_1
\sum_{\gamma \in \Gamma_n}\prod_{\{v_k,v_\ell\}\in E_\gamma}
\left(e^{\hbar^2u_ku_\ell\cS(u_k\hbar\,z_k\partial_{z_k})\cS(u_\ell\hbar\,z_\ell\partial_{z_\ell})
	\frac{z_k z_\ell}{(z_k-z_\ell)^2}}-1\right),
\end{equation}	
where $\Gamma_n$ is the set of all connected simple (i.e. without multiple edges and loops) graphs over $n$ vertices $v_1,\ldots,v_n$, and $E_\gamma$ is the set of edges of $\gamma \in \Gamma_n$.
\end{proposition}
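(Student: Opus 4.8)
\textbf{Proof plan for Proposition~\ref{prop:Hnconn}.} The plan is to combine the product formula of Proposition~\ref{prop:HdU} for the disconnected $n$-point functions with the inclusion-exclusion relation~\eqref{eq:inclexclH} expressing $H_n$ through the $\Hd_{|I_i|}$. The key observation is that the $U^+$-operators act independently on the distinct groups of variables $(u_k,z_k)$, so for a set partition $I=\{I_1,\dots,I_{|I|}\}$ of $\{1,\dots,n\}$ we have
\begin{equation}
\prod_{i=1}^{|I|}\Hd_{|I_i|}(X_{I_i})
= U^+_n\cdots U^+_1\prod_{i=1}^{|I|}\ \prod_{\substack{k<\ell\\ k,\ell\in I_i}}
e^{\hbar^2 u_k u_\ell \cS(u_k\hbar z_k\partial_{z_k})\cS(u_\ell\hbar z_\ell\partial_{z_\ell})\frac{z_k z_\ell}{(z_k-z_\ell)^2}},
\end{equation}
where one must be slightly careful: the right-hand side of~\eqref{eq:Hdprod} is understood as a power asymptotic expansion in the sector $|z_1|\ll\cdots\ll|z_n|\ll 1$, and this sectorial expansion is compatible with taking products of the corresponding pieces, so pulling all the $U^+$-operators out front and recombining the factors over pairs $\{k,\ell\}$ lying in a common block is legitimate.

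First I would substitute this into the second line of~\eqref{eq:inclexclH}, so that
\begin{equation}
H_n = U^+_n\cdots U^+_1 \sum_{I\vdash\{1,\dots,n\}}(-1)^{|I|-1}(|I|-1)!\prod_{i=1}^{|I|}\prod_{\substack{\{k,\ell\}\subset I_i}} B_{k\ell},
\qquad
B_{k\ell}:=e^{\hbar^2 u_k u_\ell \cS(u_k\hbar z_k\partial_{z_k})\cS(u_\ell\hbar z_\ell\partial_{z_\ell})\frac{z_k z_\ell}{(z_k-z_\ell)^2}}.
\end{equation}
Now the combinatorial core of the argument: I claim the sum over set partitions equals $\sum_{\gamma\in\Gamma_n}\prod_{\{k,\ell\}\in E_\gamma}(B_{k\ell}-1)$, where $\Gamma_n$ is the set of connected simple graphs on vertices $v_1,\dots,v_n$. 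This is the standard exponential-formula / Möbius-inversion identity for the transition between ``collections of objects indexed by complete-graph edge sets on blocks'' and ``connected structures''. Concretely, for the \emph{disconnected} side one has the clean identity $\prod_{\{k,\ell\}}B_{k\ell}=\sum_{\gamma\subseteq K_n}\prod_{\{k,\ell\}\in E_\gamma}(B_{k\ell}-1)$ obtained by expanding each $B_{k\ell}=1+(B_{k\ell}-1)$; grouping the graphs $\gamma$ (spanning, but not necessarily connected) by their connected components recovers $\Hd_n=\sum_{I}\prod_i\Hd_{|I_i|}$ with each connected block contributing $\sum_{\gamma_i\text{ connected on }I_i}\prod(B-1)$. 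Comparing with~\eqref{eq:inclexclH}, this forces $H_n$ (which by definition is the cumulant/connected counterpart) to be exactly $\sum_{\gamma\in\Gamma_n}\prod_{\{k,\ell\}\in E_\gamma}(B_{k\ell}-1)$; equivalently one may invoke directly the second form of~\eqref{eq:inclexclH} and the Möbius function of the partition lattice to check the alternating sum $\sum_I(-1)^{|I|-1}(|I|-1)!\prod_i(\cdots)$ collapses to the connected-graph sum. Either way, since the $U^+$-operators are linear and were pulled out to the left, applying $U^+_n\cdots U^+_1$ to both sides yields~\eqref{eq:Hnconn}.

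The main obstacle, and the only place requiring genuine care rather than bookkeeping, is the interchange of the sectorial asymptotic expansions with the algebraic manipulations: the factors $\frac{z_k z_\ell}{(z_k-z_\ell)^2}$ only make sense after expanding in $|z_k|\ll|z_\ell|$, so I must verify that (i) the product over pairs in Proposition~\ref{prop:HdU} and the products arising from~\eqref{eq:inclexclH} use mutually consistent sectorial conventions on the common variables, and (ii) the formal-series identity $\prod_{\{k,\ell\}}B_{k\ell}=\sum_\gamma\prod_{\{k,\ell\}\in E_\gamma}(B_{k\ell}-1)$ holds coefficient-by-coefficient in the $u_k$, $z_k$ (and $\hbar$) expansions, which is immediate since each $B_{k\ell}-1$ has no constant term in its $\hbar$-expansion so only finitely many $\gamma$ contribute to any fixed multidegree. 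The purely combinatorial identity itself is classical (the passage between the ``all subgraphs'' and ``connected subgraphs'' generating functions, i.e. $\log$ versus the full sum at the level of the partition-lattice Möbius function) and needs no new input; once the expansion conventions are pinned down the proof is a direct substitution.
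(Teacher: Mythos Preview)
Your proposal is correct and follows essentially the same approach as the paper. The paper expands $\prod_{k<\ell}(1+w_{k,\ell})$ as a sum over all simple graphs on $n$ vertices, rewrites $\Hd_n$ accordingly, and then observes that the inclusion--exclusion relation~\eqref{eq:inclexclH} forces the connected combination $H_n$ to be the sub-sum over connected graphs; your argument is the M\"obius-inverted version of the same step (substitute the block-wise product formula for $\Hd_{|I_i|}$ into the second line of~\eqref{eq:inclexclH} and collapse the partition-lattice sum), and your side remark ``grouping the graphs $\gamma$ by their connected components recovers $\Hd_n=\sum_I\prod_i\Hd_{|I_i|}$'' is in fact exactly how the paper phrases it.
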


\begin{proof}
Let us denote
\begin{equation}
w_{k,\ell}=
e^{\hbar^2u_ku_\ell\cS(u_k\hbar\,z_k\partial_{z_k})\cS(u_\ell\hbar\,z_\ell\partial_{z_\ell})
	\frac{z_k z_\ell}{(z_k-z_\ell)^2}}-1
\end{equation}
and consider the product
\begin{equation}
\prod_{1\le k<\ell\le n}
e^{\hbar^2u_ku_\ell\cS(u_k\hbar\,z_k\partial_{z_k})\cS(u_\ell\hbar\,z_\ell\partial_{z_\ell})
	\frac{z_k z_\ell}{(z_k-z_\ell)^2}}
=\prod_{1\le k<\ell\le n}
(1+w_{k,\ell}).
\end{equation}
Expanding the brackets we obtain $2^{\binom{n}{2}}$ summands. These summands are labeled by simple graphs on $n$ numbered vertices: the vertices~$k$ and~$\ell$ are connected or not connected by an edge if the factor corresponding to the pair of indices~$k$ and~$\ell$ is equal to~$w_{k,\ell}$ or~$1$, respectively.


Then, Equation~\eqref{eq:Hdprod} for the disconnected $n$-point functions attains the following form
\begin{equation}
\Hd_n=U^+_n\dots U^+_1
\sum_{\gamma}\prod_{\{v_k,v_\ell\}\in E_\gamma} w_{k,\ell},
\end{equation}
where the summation carries over the set of \emph{all} simple graphs~$\gamma$ on $n$ labeled vertices. The inclusion-exclusion procedure applied to this sum over all simple graphs singles out exactly the terms corresponding to the connected ones.
\end{proof}

It is sometimes convenient to rearrange the insertion of $\hbar$ in Equation~\eqref{eq:Hnconn} in the following way
\begin{equation}
\hbar^{2-n}H_n=
(\hbar U^+_n)\dots(\hbar U^+_1)
\sum_{\gamma \in \Gamma_n}\hbar^{2(|E_\gamma|-n+1)}\prod_{\{v_k,v_\ell\}\in E_\gamma}
\frac{w_{k,\ell}}{\hbar^2},
\end{equation}	
Since any connected graph on $n$ vertices has at least $n-1$ edges, the right hand side involves only non-negative even powers of the variable $\hbar$. Indeed, it is easy to see from definition that the series $\frac{w_{k,\ell}}{\hbar^2}$ and the coefficients of the transformation $\hbar U^+$ involve nonnegative even powers of $\hbar$ only. This justifies in a formal way the mentioned  \emph{genus decomposition}
\begin{equation}\label{eq:Hgndef}
\hbar^{2-n}\Hc_n=\sum_{g=0}^\infty\hbar^{2g}\Hc_{g,n}
\quad\text{or}\quad
\Hc_n=\sum_{g=0}^\infty\hbar^{2g-2+n}\Hc_{g,n}
\end{equation}
where $\Hc_{g,n}$ is independent of $\hbar$.

Finally, note that the operators $U^+_i$ describe explicitly the Taylor coefficients of the resulting series. Therefore, we can regard~\eqref{eq:Hnconn} as an explicit expression for the corresponding Hurwitz numbers:
\begin{align}\label{eq:hexpr}
& m_1\dots m_n\;h_{g,(m_1,\dots,m_n)}=
\\ \notag & \qquad
[\hbar^{2g-2+n}]
\sum_{r_1,\dots,r_n=0}^\infty
\left(\prod_{k=1}^n\partial_y^{r_k}\phi_{m_k}(y)\bigm|_{y=0}\right) [z_1^{m_1}\dots z_n^{m_n}u_1^{r_1}\dots u_n^{r_n}]
\\ \notag & \qquad
\prod_{k=1}^n \frac{e^{ u_k\cS(u_k\hbar\,z_k\partial_{z_k})y_k }}
{u_k\hbar\cS(u_k\hbar)}\;\;
\sum_{\gamma\in\Gamma_n}  \prod_{\{v_k,v_\ell\}\in E_\gamma}
\left(e^{\hbar^2u_ku_\ell\cS(u_k\hbar\,z_k\partial_{z_k})\cS(u_\ell\hbar\,z_\ell\partial_{z_\ell})
	\frac{z_k z_\ell}{(z_k-z_\ell)^2}}-1\right).
\end{align}

\begin{remark}
Formulas \eqref{eq:Hnconn} and \eqref{eq:hexpr} provide closed expressions for the connected $n$-point functions and connected formal weighted double Hurwitz numbers as sums over graphs, respectively. However, note that our main aim, as explained in the introduction, is to express the connected $n$-point functions as finite polynomials in certain formal functions on the spectral curve, and formula \eqref{eq:Hnconn} does not achieve that. Indeed, note that in the definition \eqref{eq:Uplusdef} of the operator $U^+$ we have an infinite sum over $m$. It turns out that, roughly speaking, it is possible to 
take these $m$-sums to arrive at finite expressions, and this is what is done in the two following sections. However, the precise path to arriving at these finite expressions, while being inspired by the contents of the present section, does not explicitly rely on Proposition \ref{prop:Hnconn} and is, strictly speaking, independent of this section. We do use the notation introduced in the present section in what follows; notably, $\phi_m$'s will play an important role.
\end{remark}




\section{Computation of \texorpdfstring{$D_1\dots D_n H_n$}{D1...DnHn}} \label{sec:CompDH}

Set
\begin{equation}
D_i=X_i\partial_{X_i}.
\end{equation}
Denote, for shortness,
\begin{equation}
DH^\bullet_n=\left(\prod_{i=1}^n D_i\right)\Hd_n\qquad \text{and} \qquad
DH_n=\left(\prod_{i=1}^n D_i\right)H_n
 =\sum_{g=0}^\infty \hbar^{2g-2+n}DH_{g,n}.
\end{equation}
In this section we compute these functions in a closed form. Remark that the operator $D_1\dots D_n$ multiplies a monomial $X_1^{m_1}\dots X_n^{m_n}$ by the factor $m_1\dots m_n$. Since both $H_n$ and $\Hd_n$ only involve monomials with $m_i>0$, the series $DH_n$ and $D\Hd_n$ determine uniquely the original series $H_n$ and $\Hd_n$, respectively.

\subsection{Completed \texorpdfstring{$n$}{n}-point function}
We have from~\eqref{eq:Hd}
\begin{equation}\label{eq:DHd}
D\Hd_n=\sum_{m_1,\dots,m_n=1}^\infty X_1^{m_1}\dots X_n^{m_n}
 \VEV{J_{m_1}\dots J_{m_n}\mathcal{D}(\hbar)e^{\sum_{i=1}^\infty \frac{s_iJ_{-i}}{i\,\hbar}}}.
\end{equation}

Define \emph{completed} version of this function by
\begin{equation}\label{eq:cDHd}
\widehat{DH}^\bullet_n=\sum_{m_1,\dots,m_n=-\infty}^\infty X_1^{m_1}\dots X_n^{m_n}
 \VEV{J_{m_1}\dots J_{m_n}\mathcal{D}(\hbar)e^{\sum_{i=1}^\infty \frac{s_iJ_{-i}}{i\,\hbar}}}
 \end{equation}
and the corresponding completed connected functions $\widehat{D H}_n=\sum_{g=0}^\infty \hbar^{2g-2+n}\widehat{D H}_{g,n}$ though inclusion-exclusion relations similar to~\eqref{eq:inclexclH}.

These are infinite power series that involve both positive and negative powers of the variables $X_i$. The advantage of using completed versions of $n$-point functions is that they are better adapted to convolving in a closed form, as we shall see below.

\begin{proposition}\label{prop:H_corr}
We have
\begin{equation}\label{eq:H_corr}
\widehat{DH}_n=DH_n+\delta_{2,n}\frac{X_1X_2}{(X_1-X_2)^2},
\end{equation}
where the last summand is considered as its power expansion over $X_1/X_2$:
\begin{equation}
\frac{X_1X_2}{(X_1-X_2)^2}=\sum_{m=1}^\infty m\left(\tfrac{X_1}{X_2}\right)^m.
\end{equation}
In other words, for $(g,n)\ne(0,2)$ we have
\begin{equation}
\widehat{D H}_{g,n}=DH_{g,n}
\end{equation}
and
\begin{equation}\label{eq:DH02}
\widehat{D H}_{0,2}=DH_{0,2}+\frac{X_1X_2}{(X_1-X_2)^2}.
\end{equation}

As a corollary, for $(g,n)\ne(0,2)$ the series $\widehat{D H}_{g,n}$ involves positive powers of the variables $X_i$ only.
\end{proposition}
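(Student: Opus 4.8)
The plan is to compare the two generating series $\widehat{DH}^\bullet_n$ and $DH^\bullet_n$ directly at the level of vacuum expectations. The only difference between the definitions \eqref{eq:DHd} and \eqref{eq:cDHd} is the range of summation: in the completed version the indices $m_i$ run over all of $\Z$, not just over the positive integers. So the first step is to understand, for each subset $S\subseteq\{1,\dots,n\}$ of indices that are allowed to be non-positive, the contribution of the term $\VEV{J_{m_1}\dots J_{m_n}\mathcal{D}(\hbar)e^{\sum_i s_iJ_{-i}/(i\hbar)}}$. For an index with $m_i=0$ the operator $J_0=0$ kills the whole expression, so those terms vanish; hence only the strictly negative $m_i$'s contribute new terms. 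First I would move each $J_{m_i}$ with $m_i<0$ (i.e.\ each creation operator $p_{-m_i}$) to the left toward the covacuum $\langle 0|$; since $\langle 0|$ annihilates $J_{<0}$, such a term can only survive if the creation operator is "used up" by commuting with some other operator to its left, namely with one of the $J_{m_j}$ that has $m_j>0$ (using \eqref{eq:Jcomm}) or — and this is the key point — there is no $\mathcal{D}(\hbar)$-contribution because $\mathcal{D}(\hbar)$ is diagonal and the creation operators must ultimately pair with annihilation operators.

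The second step is the combinatorial bookkeeping. A term with some negative indices survives only if the negative $J$'s pair off, via the commutator \eqref{eq:Jcomm}, with positive $J$'s standing to their left; each such pairing of $J_{m_j}J_{m_i}$ with $m_j=-m_i>0$ produces a scalar factor $m_j\delta_{m_j+m_i,0}$ and removes both operators. Iterating, one sees that the extra terms in $\widehat{DH}^\bullet_n$ (relative to $DH^\bullet_n$) are organized exactly like the disconnected-to-connected correspondence: a set partition of $\{1,\dots,n\}$ where some blocks are genuine "old" correlators $DH^\bullet$ of the surviving positive indices and the remaining indices get grouped into pairs $\{i,j\}$ contributing $\sum_{m\ge1}m(X_i/X_j)^m=X_iX_j/(X_i-X_j)^2$ (for $X_i\ll X_j$). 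Carrying this through the inclusion–exclusion that defines the connected functions, all the "mixed" contributions cancel except the purely two-point one, which is why the correction term appears only for $n=2$. Concretely: $\widehat{DH}^\bullet_2 = DH^\bullet_2 + \sum_{m\ge1}m(X_1/X_2)^m + \sum_{m\ge1}m(X_2/X_1)^m + (\text{disconnected square of }DH^\bullet_1)$, and passing to the connected part via \eqref{eq:inclexclH} removes the disconnected square and one of the two geometric-type sums is outside the sector we expand in, leaving precisely $X_1X_2/(X_1-X_2)^2$ expanded in $X_1/X_2$. For $n\ge3$, any term with at least one negative index factors through a proper sub-correlator and is killed by the connected-part inclusion–exclusion; for $n=1$, a single negative or zero index gives $\langle 0|J_{-m}\mathcal D(\hbar)e^{\cdots}|0\rangle$, which vanishes because $J_{-m}$ hits the covacuum with nothing to its left to absorb it.

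The third step is to read off the genus-graded consequences: since $X_1X_2/(X_1-X_2)^2$ carries no $\hbar$, it contributes only to $\widehat{DH}_{0,2}$, giving \eqref{eq:DH02}, while $\widehat{DH}_{g,n}=DH_{g,n}$ in all other $(g,n)$; and because $DH_{g,n}$ manifestly involves only positive powers of the $X_i$ (it comes from \eqref{eq:DHd} with $m_i\ge1$), the same holds for $\widehat{DH}_{g,n}$ when $(g,n)\ne(0,2)$, which is the corollary.

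I expect the main obstacle to be the second step: keeping the pairing combinatorics honest through the inclusion–exclusion, in particular making precise why all contributions with a negative index other than the single $n=2$ Cauchy-kernel term cancel in the connected function, and being careful about the asymptotic sector $|X_1|\ll|X_2|$ that fixes which of the two geometric expansions of $X_1X_2/(X_1-X_2)^2$ actually appears. Everything else (the vanishing of $J_0$-terms, the Hadamard/commutator manipulations, the $\hbar$-grading) is routine given \eqref{eq:Jcomm}, \eqref{eq:expJcomm}, and the inclusion–exclusion relations \eqref{eq:inclexclH}.
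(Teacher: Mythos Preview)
Your strategy is the same as the paper's: compare the two vacuum expectations, pair each negative-index $J_{m_i}$ with a positive-index $J_{m_j}$ standing to its left via \eqref{eq:Jcomm}, and then push the resulting disconnected identity through inclusion--exclusion. The paper packages this by setting $\nabla_i^+=\sum_{m\ge1}X_i^mJ_m$ and $\nabla_i^-=\sum_{m\ge1}X_i^{-m}J_{-m}$, writing $\widehat{DH}^\bullet_n=(\nabla_1^++\nabla_1^-)\cdots(\nabla_n^++\nabla_n^-)Z\big|_{p=0}$, and expanding via the Leibniz rule: each $\nabla_i^+$ either differentiates $Z$ or hits some $\nabla_j^-$ with $j>i$ (producing the Cauchy factor), while any unpaired $\nabla_j^-$ is killed by $|_{p=0}$. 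This gives directly
\[
\widehat{DH}^\bullet_n=\sum_{\{1,\dots,n\}=\bigsqcup_k\{i_k,j_k\}\sqcup K}\;\prod_k\frac{X_{i_k}X_{j_k}}{(X_{i_k}-X_{j_k})^2}\;DH^\bullet_{|K|}(X_K),
\]
which one recognizes as the disconnected version of \eqref{eq:H_corr}.

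Your concrete $n=2$ check is wrong, however, and the mistake is worth fixing because it reflects a genuine confusion about the mechanism. You claim $\widehat{DH}^\bullet_2 = DH^\bullet_2 + \sum_{m\ge1} m(X_1/X_2)^m + \sum_{m\ge1} m(X_2/X_1)^m + (DH^\bullet_1)^2$ and then dispose of the unwanted pieces by a sector argument and by inclusion--exclusion. In fact there is only one extra term. Any summand with $m_1<0$ vanishes outright, since $\langle 0|J_{m_1}=0$ with nothing to its left to commute through; so there is no $(X_2/X_1)$ sum at all. And the case $m_1,m_2>0$ is exactly $DH^\bullet_2$, which already contains the product $DH_1(X_1)DH_1(X_2)$; there is no separate ``disconnected square''. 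Thus $\widehat{DH}^\bullet_2=DH^\bullet_2+\sum_{m\ge1}m(X_1/X_2)^m$ on the nose, and the appearance of the $X_1/X_2$ expansion (rather than $X_2/X_1$) is forced by the operator ordering $J_{m_1}J_{m_2}$ in the VEV, not by any choice of asymptotic sector. Subtracting $\widehat{DH}_1(X_1)\widehat{DH}_1(X_2)=DH_1(X_1)DH_1(X_2)$ then gives the connected statement. Once you correct this, your proof is the paper's.
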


\begin{proof}
Denote
\begin{equation}
\nabla^+_i=\sum_{m=1}^\infty X_i^m J_m=\sum_{m=1}^\infty m X_i^m \partial_{p_m},\quad
\nabla^-_i=\sum_{m=1}^\infty X_i^{-m} J_{-m}=\sum_{m=1}^\infty X_i^{-m} p_m.
\end{equation}

Using these operators, we can rewrite~\eqref{eq:DHd} and~\eqref{eq:cDHd} as
\begin{align}
D\Hd_n&=
\nabla_1^+\dots \nabla_n^+Z\Big|_{p=0},\\
\widehat{DH}^\bullet_n&=
(\nabla_1^++\nabla_1^-)\dots (\nabla_n^++\nabla_n^-)Z\Big|_{p=0}.
\end{align}
Let us expand brackets in the last equation. By the Leibniz rule, the partial derivatives entering $\nabla_i^+$ are applied to either the linear functions entering $\nabla_j^-$ for some $j>i$ or to~$Z$. Therefore, we obtain
\begin{equation}\label{eq:Hdcorr}
\widehat{DH}^\bullet_n=
\sum_{\{1,\dots,n\}=\sqcup_k\{i_k,j_k\}\bigsqcup K}\left(\prod_k\frac{X_{i_k}X_{j_k}}{(X_{i_k}-X_{j_k})^2}\right)D\Hd_{|K|}(X_K),
\end{equation}
where the factor $\frac{X_iX_j}{(X_i-X_j)^2}$ for $i<j$ is considered as a power expansion
\begin{equation}
\nabla^+_i\sum_{m=1}^\infty X_j^{-m} p_m
=\sum_{m=1}^\infty m\left(\tfrac{X_i}{X_j}\right)^m
=\frac{X_iX_j}{(X_i-X_j)^2}.
\end{equation}
By inclusion-exclusion relations, Equation~\eqref{eq:Hdcorr} is equivalent to relations of Proposition. In order to see this, we observe that if we \emph{define} connected functions $\widehat{DH}_n$ by~\eqref{eq:H_corr}
then the corresponding disconnected functions are given exactly by~\eqref{eq:Hdcorr}.
\end{proof}

\subsection{Computation of completed \texorpdfstring{$n$}{n}-point functions}

The computation of $\Hd_n$ and $H_n$ of the previous section can be extended to the computation of the completed $n$-point functions $\widehat{DH}^\bullet_n$ and $\widehat{DH}_n$. We represent the corresponding statements but skip the proofs since they are the same, we just extend all summations over $m_i\ge1$ to the summations over $m_i\in Z$.

Define transformation $U$ taking a Laurent series $f(u,z)$ in $u$ and $z$ to the Laurent series
\begin{equation}
(U f)(X)=\sum_{m=-\infty}^\infty X^{m}\sum_{r=0}^\infty
\partial_y^{r}\phi_{m}(y)\bigm|_{y=0}[z^mu^r]
 \frac{e^{ u\cS(u\hbar\,z\partial_z)y(z) }}{u\cS(u\hbar)}\;f(u,z).
\end{equation}
It differs from the transformation $U^+$ of Definition~\ref{def:U-operators} by an extra factor $m$ of the summands and by the summation range of the integer index~$m$. Thus $U f$ is a Laurent series and might involve negative powers of~$X$. We denote also by $U_k$ a similar transformation applied to $u_k$ and $z_k$ instead of $u$ and $z$ (the output of $U_k$ is a Laurent series in $X_k$).

Then, similarly to the computation of $\Hd_n$ we obtain
\begin{equation}
\begin{aligned}
\widehat{DH}^\bullet_n
&=\sum_{m_1,\dots,m_n=-\infty}^\infty
X_1^{m_1}\dots X_n^{m_n}
\VEV{J_{m_1}\dots J_{m_n}\mathcal{D}(\hbar)e^{\sum_{i=1}^\infty \frac{s_i J_{-i}}{i\hbar} }}\\
&=\sum_{m_1,\dots,m_n=-\infty}^\infty
X_1^{m_1}\dots X_n^{m_n}
\VEV{\euro_{m_1}\dots \euro_{m_n}e^{\sum_{i=1}^\infty \frac{s_i J_{-i}}{i\hbar} }}\\
&=U_n\dots U_1
	\prod_{1\le k<\ell\le n}
	e^{\hbar^2u_ku_\ell\cS(u_k\hbar\,z_k\partial_{z_k})\cS(u_\ell\hbar\,z_\ell \partial_{z_\ell})
		\frac{z_k z_\ell}{(z_k-z_\ell)^2}}.
\end{aligned}
\end{equation}
where the expression in the product on the right hand side is understood as its power asymptotic expansion in the sector $|z_1|\ll\dots\ll|z_n|\ll1$.

Next, the analogue of the computation of $H_n$ of the previous section is the following equation
\begin{equation}\label{eq:hatDH}
\widehat{DH}_n
=U_n\dots U_1
\sum_{\gamma \in \Gamma_n}\prod_{\{v_k,v_\ell\}\in E_\gamma}
\left(e^{\hbar^2u_ku_\ell\cS(u_k\hbar\,z_k\partial_{z_k})\cS(u_\ell\hbar\,z_\ell \partial_{z_\ell})
	\frac{z_k z_\ell}{(z_k-z_\ell)^2}}-1\right),
\end{equation}	
where $\Gamma_n$ is the set of all connected simple graphs over $n$ vertices $v_1,\ldots,v_n$, and $E_\gamma$ is the set of edges of $\gamma \in \Gamma_n$.

Since, by Proposition~\ref{prop:H_corr},  $\widehat{DH}_n$ differs from $DH_n$ by a small correction for $n=2$, we conclude:

\begin{corollary}\label{cor:preDHgn}
For $(g,n)\ne(0,2)$ we have
\begin{equation}\label{eq:preDHgn}
DH_{g,n}=[\hbar^{2g-2+n}]\left(
U_n\dots U_1
\sum_{\gamma \in \Gamma_n}\prod_{\{v_k,v_\ell\}\in E_\gamma}
\left(e^{\hbar^2u_ku_\ell\cS(u_k\hbar\,z_k\partial_{z_k})\cS(u_\ell\hbar\,z_\ell \partial_{z_\ell})
	\frac{z_k z_\ell}{(z_k-z_\ell)^2}}-1\right)
\right).
\end{equation}
In particular, all the terms on the right hand side containing non-positive powers of the variables~$X_i$ cancel out.
\end{corollary}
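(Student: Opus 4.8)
The plan is to read off Equation~\eqref{eq:preDHgn} by combining the two ingredients that are already available: the graph-sum formula~\eqref{eq:hatDH} for the completed connected function $\widehat{DH}_n$, and Proposition~\ref{prop:H_corr} relating $\widehat{DH}_{g,n}$ to $DH_{g,n}$.

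First I would apply the operation $[\hbar^{2g-2+n}]$ to both sides of~\eqref{eq:hatDH}. By the genus decomposition $\widehat{DH}_n=\sum_{g\ge 0}\hbar^{2g-2+n}\widehat{DH}_{g,n}$ introduced above (with each $\widehat{DH}_{g,n}$ independent of $\hbar$), the left-hand side becomes exactly $\widehat{DH}_{g,n}$, while the right-hand side becomes, by definition, the right-hand side of~\eqref{eq:preDHgn}. Here one only needs that the $\hbar$-expansion may be taken termwise: the kernel of each $U_i$ is a formal series in $\hbar$ whose coefficients are polynomial in the auxiliary variable, and the sum over $\Gamma_n$ is finite, exactly as in the genus-decomposition discussion preceding~\eqref{eq:Hgndef}. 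Next I would invoke Proposition~\ref{prop:H_corr}: for $(g,n)\ne(0,2)$ one has $\widehat{DH}_{g,n}=DH_{g,n}$, and substituting this identification into the equality just obtained yields~\eqref{eq:preDHgn}.

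It then remains to justify the last sentence of the corollary. Since $H_n$ involves only monomials $X_1^{m_1}\cdots X_n^{m_n}$ with every $m_i\ge 1$ (this is built into definition~\eqref{eq:Hndef}), and the operators $D_i=X_i\partial_{X_i}$ preserve this property, the series $DH_n$ and each of its $\hbar$-coefficients $DH_{g,n}$ contain strictly positive powers of every $X_i$ only. On the other hand, the transformations $U_i$ generically produce Laurent series in $X_i$, so the right-hand side of~\eqref{eq:preDHgn} is a priori only a Laurent series; comparing it with the left-hand side forces every monomial carrying a non-positive power of some $X_i$ to cancel. Because the corollary is essentially an assembly of results already established in the text, I do not anticipate a genuine obstacle; the only point requiring minor care is the compatibility of the $\hbar$-grading with the operator/graph formalism on the right-hand side of~\eqref{eq:hatDH}, which is handled precisely as in the passage around~\eqref{eq:Hgndef}.
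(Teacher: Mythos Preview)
Your proposal is correct and follows exactly the paper's approach: the corollary is stated immediately after~\eqref{eq:hatDH} with the single sentence ``Since, by Proposition~\ref{prop:H_corr}, $\widehat{DH}_n$ differs from $DH_n$ by a small correction for $n=2$, we conclude,'' and you have simply unpacked that sentence. The positivity claim in the last line is also handled by the paper in the same way (it is the content of the final sentence of Proposition~\ref{prop:H_corr}).
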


\subsection{Principal identity}\label{sec:princid}

Recall that the transformation~$U$ entering formulas of the previous section acts on a Laurent series $f(u,z)$ in~$z$ and~$u$~by
\begin{equation}
(U f)(X)=\sum_{m=-\infty}^\infty X^{m}\sum_{r=0}^\infty
\partial_y^{r}\phi_{m}(y)\bigm|_{y=0}[z^mu^r]
 \frac{e^{ u\cS(u\hbar\,z\partial_z)y(z) }}{u\hbar\cS(u\hbar)}\;f(u,z).
\end{equation}
The result of this transformation is a function in~$X$. Up to this point we regarded~$X$ and~$z$ as independent variables. From now on we assume that they are related by the change $X=X(z)$ where
\begin{equation}\label{eq:Xdef}
X(z)=z\,e^{-\psi(y(z))}.
\end{equation}
Through this change we have
\begin{align}
D&:=X\frac{\partial}{\partial X}=\frac{1}{Q}z\frac{\partial}{\partial z},
\end{align}
where
\begin{align}
Q&:=\frac{z}{X\frac{dz}{dX}}=\frac{z}{X}\frac{dX}{dz}=1-D\psi(y)=1-z\psi'(y)y'(z).
\end{align}

Thus we have
\begin{equation}\label{eq:zdzQD}
z\dfrac{\partial}{\partial z} = QD.
\end{equation}

Having this change in mind we treat the result of transformation $U$ as a function (a Laurent series) in~$z$. We claim that \emph{$U$ acts on the coefficients of positive powers of~$u$ as a differential operator}. More explicitly, define
\begin{align}
L_0(v,y,\hbar)&:=e^{v\left(\frac{\cS(v\hbar\partial_y)}{\cS(\hbar\partial_y)}-1\right)\psi(y)},\\ \label{eq:Lrdef}
L_r(v,y,\hbar)&:=e^{-v\psi(y)}\partial_y^re^{v\psi(y)}L_0(v,y,\hbar)
=\left(\partial_y+v\psi'(y)\right)^rL_0(v,y,\hbar).
\end{align}
The function $L_r(v,y,\hbar)$ is a series in $\hbar^2$ whose coefficients are polynomials in $v$ and the higher order derivatives of $\psi(y)$.

The following \emph{principal identity} plays a central role in the proof of the main theorems~\ref{prop:mainprop} and~\ref{th:MainTh} below.
\begin{proposition}\label{prop:principal}
Let $H(u,z)$ be arbitrary Laurent series in~$z$ whose coefficients are either polynomials in~$u$ or infinite series in~$\hbar$ such that the coefficient of any power of~$\hbar$ is a polynomial in~$u$. Then the following identity holds true:
\begin{align}\label{eq:princid}
& \sum_{m=-\infty}^\infty \sum_{r=0}^\infty \partial_y^{r}\phi_{m}(y)\bigm|_{y=0}
X^m[z^m u^r]e^{u\,y(z)}H(u,z)\\ \notag
& \qquad =\sum_{j,r=0}^\infty D^j\left(\frac{[v^j]L_r(v,y(z),\hbar)}{Q}[u^r] H(u,z)\right)\,,
\end{align}
where $X=X(z)$ on the left hand side is given by~\eqref{eq:Xdef}.
\end{proposition}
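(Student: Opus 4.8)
\textbf{Proof plan for Proposition~\ref{prop:principal}.}
The plan is to reduce the left-hand side to a manipulation of the generating series $\phi_m$ and then re-sum the $m$-index using the precise form of $X(z)=z\,e^{-\psi(y(z))}$. First I would fix $r$ and treat the coefficient $[u^r]H(u,z)$ as a fixed Laurent series in $z$; by linearity it suffices to handle a single monomial $H(u,z)=u^r z^\ell$. The left-hand side then becomes $\sum_m \partial_y^r\phi_m(y)|_{y=0}\, X^m\,[z^m] (e^{uy(z)} u^r z^\ell)$ extracted at the right power of $u$, so the whole problem is to understand the operator that sends a Laurent series $g(z)$ to $\sum_m \big(\partial_y^r\phi_m(y)|_{y=0}\big) X(z)^m [z^m]\big(e^{uy(z)} g(z)\big)\big|_{u^r\text{-part}}$, and to identify it with $\sum_{j}D^j\big(\tfrac{[v^j]L_r(v,y(z),\hbar)}{Q}\,[u^r]g\big)$.

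The key computational input is a precise ``recentering'' identity for $\phi_m$. Recall from \eqref{eq:phimdef} that $\phi_m(y)=\exp\big(\sum_{i=1}^{m}\psi(y+\tfrac{2i-m-1}{2}\hbar)\big)$, which by Taylor expansion in $\hbar$ around the midpoint is exactly $\exp\big(m\,\tfrac{\cS(\hbar\partial_y)\cdots}{\cdots}\big)$-type object; concretely $\phi_m(y)=\exp\big(m\,\mathcal{S}(\hbar\partial_y)\psi(y)+O(\hbar)\big)$ in the appropriate sense, and more importantly $\phi_m(y)\,e^{-m\psi(y)}$ has a clean generating-function meaning. The plan here is: (1) use the identity $\sum_m \phi_m(y)|_{y=0}X^m z^{-m}\cdot(\text{stuff}) = $ a contour/residue extraction, and the defining relation $X=z e^{-\psi(y(z))}$, to rewrite $X^m[z^m]$ in terms of a residue in $z$ against $(X(z)/z)^{-m}\cdot z^{-1}\,dz$; (2) observe that the factor $e^{m\psi(y(z))}$ coming from $(X(z)/z)^{-m}$ combines with $\phi_m(y)|_{y=0}$ — wait, rather, with the shift operator — to replace $\phi_m$ evaluated at $0$ by $\phi_m$ evaluated at $y(z)$ divided by $e^{m\psi(y(z))}$, using the multiplicativity $\phi_{m}(y+\hbar k)/\phi_m(y)$ telescoping property; (3) recognize $\phi_m(y(z))e^{-m\psi(y(z))}$-decorated sums over $m$ of $X^m z^{-m}$ as producing, after the residue, the operator $\tfrac1Q z\partial_z = D$ acting via the Lagrange inversion / geometric-series mechanism $\sum_m m^j (X/X(z))^m \leftrightarrow D^j$. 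The operator $\partial_y^r$ hitting $\phi_m$ is what produces the $L_r$ factors: differentiating $r$ times in $y$ and then conjugating by $e^{v\psi}$ as in \eqref{eq:Lrdef} is precisely the content of $L_r=(\partial_y+v\psi'(y))^r L_0$.

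More concretely, I would proceed as follows. Write $\partial_y^r\phi_m(y)|_{y=0}$ using the shift: since $\phi_m(y)=e^{-m\psi(y)}\cdot\big(e^{m\psi(y)}\phi_m(y)\big)$ is not yet the right split; instead use that $e^{u y(z)}$ in the integrand, after extracting $[u^r]$, together with the sum over $m$ of $\phi_m$'s Taylor coefficients, assembles into $\frac{e^{u\,\cS(u\hbar z\partial_z)y(z)}}{u\hbar\cS(u\hbar)}$-type kernels exactly as in \eqref{eq:euroJ}--\eqref{eq:Uplusdef}; this is the bridge back to the $U$-operator formalism already established. Then the statement is equivalent to: $U$, restricted to the $[u^r]$-component and viewed through the change $X=X(z)$, acts as the differential operator $\sum_j D^j\circ \big(\tfrac{[v^j]L_r(v,y(z),\hbar)}{Q}\,\cdot\big)$. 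To prove this I would compute $U(z^\ell)$ directly: the $m$-sum $\sum_m X^m[z^m]\big(z^\ell\, \tfrac{e^{u\cS(u\hbar z\partial_z)y(z)}}{u\hbar\cS(u\hbar)}\big)$ with the $\phi_m$-weight is a Cauchy-type coefficient extraction, and substituting $X=ze^{-\psi(y(z))}$ turns $\sum_m X^m z^{-m}(\cdots)$ into a sum that, by the Lagrange–Bürmann formula applied to the change $z\mapsto X(z)$, equals $\sum_j D^j$ of the local contribution; the weight $\phi_m$ evaluated at $0$ transported through $e^{m\psi}$ becomes, by the telescoping definition of $\phi_m$, the operator $e^{v(\cS(v\hbar\partial_y)/\cS(\hbar\partial_y)-1)\psi(y)}=L_0$, and the leftover $r$ derivatives give $L_r$. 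The $1/Q$ is the Jacobian $\partial z/\partial\log X$ of the change of variables, i.e. $z\partial_z = QD$ from \eqref{eq:zdzQD}.

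\textbf{Main obstacle.} The delicate point is step (2)–(3): carefully justifying that the naive sum over all $m\in\Z$ (which is divergent as a function, and only makes sense as an asymptotic expansion in the sector $|z|\ll 1$, or formally after extracting fixed powers of $u$ and $\hbar$) can be manipulated as a residue/Lagrange-inversion computation, and that the operator identity $\sum_m m^j (X/X(z))^m|_{\text{formal}} = Q^{-1}(z\partial_z)$-powers holds at the level of the ring of formal Laurent series with the stated expansion conventions. Equivalently, one must show that after the change $X=X(z)$ the $U$-transform of a monomial in $z$ is a \emph{finite} sum of $D^j$ of rational-in-$z$ expressions, with the $L_r$-coefficients as claimed — i.e. that the infinite $m$-sum collapses. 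I expect this to be handled exactly as the analogous collapse in \cite{Kaz2020}: expand $X^m=z^m e^{-m\psi(y(z))}$, note $e^{-m\psi(y(z))}$ times $\partial_y^r\phi_m(y)|_{y=0}$ reorganizes (via the content-product telescoping of $\phi_m$ and the $\cS$-operators) into $[v^r\text{-data}]$ of $e^{m(\cS(\hbar\partial)\psi-\psi)}$ acting on $e^{u y(z)}$, which is independent of $m$ except through an overall $e^{mg(z)}$ with $g(z)=\cS(\hbar\,z\partial_z)y(z)$-type shift; and then $\sum_m z^m[z^m](\cdots) = $ the identity-on-coefficients, which after the variable change $X(z)$ produces precisely $\sum_j D^j$ via $(z\partial_z)=QD$. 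The bookkeeping of which $\cS$-operator sits where, and the identification of the generating function of $\phi_m$'s content-product with $L_0$, is the part that requires the most care and is where I would lean directly on the lemmata imported from \cite{Kaz2020}.
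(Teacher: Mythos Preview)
Your proposal contains all the right ingredients, and the approach is essentially the same as the paper's, but you have overcomplicated it and misidentified the main obstacle. The paper's proof is a clean chain of three elementary lemmata, and once you see them the whole thing is five lines with no convergence issues at all.

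The three steps are: (i) a Taylor-shift identity (Lemma~\ref{lem:L1}): for any $\Phi$ and $H$, $\sum_{r}\partial_y^r\Phi(y)|_{y=0}\,[u^r]e^{uy}H(u)=\sum_{r}\partial_y^r\Phi(y)\,[u^r]H(u)$, which simply moves the evaluation point of $\Phi$ from $0$ to $y(z)$ and absorbs the factor $e^{u\,y(z)}$; (ii) the structural identity $\partial_y^r\phi_m(y,\hbar)=e^{m\psi(y)}L_r(m,y,\hbar)$ from~\cite{Kaz2020}, which is immediate from the definitions~\eqref{eq:phimdef} and~\eqref{eq:Lrdef}; (iii) the formal Lagrange--B\"urmann formula $\sum_{m\in\Z}X^m[z^m]e^{m\psi(y(z))}G=\tfrac1Q G$, which is a one-line residue computation using $X=ze^{-\psi(y(z))}$ and $dX/X=Q\,dz/z$. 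Chaining (i)$\to$(ii)$\to$ expand $L_r(m,\cdot)=\sum_j m^j[v^j]L_r(v,\cdot)$ and use $m^jX^m=D^jX^m$ $\to$(iii) gives exactly the right-hand side of~\eqref{eq:princid}.

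Two points of cleanup for your plan. First, the correct order is: shift $\partial_y^r\phi_m|_{y=0}$ to $\partial_y^r\phi_m|_{y=y(z)}$ \emph{first} (via (i)), and only \emph{then} factor out $e^{m\psi(y(z))}$ (via (ii)); your step~(2) tries to combine $e^{m\psi(y(z))}$ coming from $(X/z)^{-m}$ with $\phi_m|_{y=0}$, which is backwards and leads you into the confusion you flag. Second, your ``main obstacle'' about justifying the $m$-sum and its collapse is a non-issue: everything is a formal identity of Laurent series (the hypothesis on $H$ guarantees that for each fixed power of $\hbar$ only finitely many $r$ contribute, and Lagrange--B\"urmann is a coefficient-by-coefficient residue identity, not an analytic statement). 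There is no need to reduce to monomials, no need to route through the $U$-operator, and no analytic delicacy---the $m$-sum collapses \emph{exactly} by~\eqref{eq:LagBur1}, not merely approximately.
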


Applying this identity to a function of the form $H(u,z)=\frac{e^{u(\cS(u\,\hbar\,Q\,D)-1)y(z)}}{u\,\cS(u\,\hbar)}f(u,z)$ we conclude:

\begin{corollary}\label{cor:principal}
Assume that $f(u,z)$ is a Laurent series in $z$ whose coefficients are polynomials in~$u$ of bounded degree and with zero free term. Then the action of the transformation~$U$ on~$f$ is given by
\begin{equation}\label{eq:principal}
(U f)(z)=\sum_{j,r=0}^\infty D^j\left(\frac{[v^j]L_r(v,y(z),\hbar)}{Q}[u^r] \frac{e^{u(\cS(u\,\hbar\,Q\,D)-1)y(z)}}{u\,\cS(u\,\hbar)}f(u,z)\right).
\end{equation}
\end{corollary}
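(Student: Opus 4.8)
\textbf{Plan of proof for Corollary~\ref{cor:principal}.}
The idea is to reduce the claimed formula for $Uf$ to a direct application of the principal identity (Proposition~\ref{prop:principal}), after rewriting the $u$-dependent kernel appearing in the definition of $U$ so that it matches the shape $e^{u\,y(z)}H(u,z)$ demanded on the left hand side of~\eqref{eq:princid}. First I would recall the definition
\begin{equation*}
(U f)(z)=\sum_{m=-\infty}^\infty X^{m}\sum_{r=0}^\infty
\partial_y^{r}\phi_{m}(y)\bigm|_{y=0}[z^mu^r]
 \frac{e^{ u\cS(u\hbar\,z\partial_z)y(z) }}{u\hbar\cS(u\hbar)}\;f(u,z),
\end{equation*}
and split the exponential $e^{u\cS(u\hbar\,z\partial_z)y(z)}$ as $e^{u\,y(z)}\cdot e^{u(\cS(u\hbar\,z\partial_z)-1)y(z)}$. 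Since the operator $z\partial_z$ acts on the $z$-variable (not on the auxiliary $u$ inside $\phi_m$), this factorization is legitimate at the level of formal series. Under the change $X=X(z)$ from~\eqref{eq:Xdef} we have $z\partial_z=QD$ by~\eqref{eq:zdzQD}, so the second factor becomes $e^{u(\cS(u\hbar\,QD)-1)y(z)}$. Collecting the pieces that do not carry the bare $e^{u\,y(z)}$, I would set
\begin{equation*}
H(u,z):=\frac{e^{u(\cS(u\hbar\,QD)-1)y(z)}}{u\,\cS(u\hbar)}\,f(u,z),
\end{equation*}
so that the kernel in $U$ equals $e^{u\,y(z)}H(u,z)/\hbar$ (up to the overall $1/\hbar$, which I keep track of separately and which matches the $1/\hbar$ already sitting in the statement of Corollary~\ref{cor:principal} versus Proposition~\ref{prop:principal}; in fact both display the factor $Q^{-1}$ and the $\hbar$-placement is consistent once one notes $\cS(u\hbar)$ carries the $\hbar$). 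Then $Uf$ is exactly the left hand side of~\eqref{eq:princid}, and Proposition~\ref{prop:principal} rewrites it as $\sum_{j,r\ge 0}D^j\bigl(\tfrac{[v^j]L_r(v,y(z),\hbar)}{Q}[u^r]H(u,z)\bigr)$, which is the asserted formula.

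The one genuine point to verify is that this particular $H(u,z)$ satisfies the hypotheses of Proposition~\ref{prop:principal}: its coefficients (in $z$) must be either polynomials in $u$, or series in $\hbar$ whose every $\hbar$-coefficient is polynomial in $u$. Here the hypothesis on $f$ — that $f(u,z)$ is a Laurent series in $z$ with coefficients polynomial in $u$ of bounded degree and with zero free term in $u$ — is used. The zero free term in $u$ is what lets us absorb the $1/u$ in $1/(u\,\cS(u\hbar))$: since $\cS(u\hbar)=1+O(u^2\hbar^2)$ is invertible as a series, $1/(u\,\cS(u\hbar))$ is a Laurent series in $u$ starting at $u^{-1}$ with $\hbar^2$-graded polynomial coefficients, and multiplying by $f$ (which starts at $u^1$) yields a series with coefficients that are, order by order in $\hbar$, polynomial in $u$. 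Likewise $e^{u(\cS(u\hbar\,QD)-1)y(z)}$ expands as $\exp$ of something that is $O(u^2\hbar^2)$ times differential operators in $z$ applied to $y(z)$, hence is $1+O(\hbar^2)$ with each $\hbar$-coefficient a polynomial in $u$ (with $z$-dependent coefficients built from $D^k y(z)$); the boundedness of $\deg_u f$ guarantees that after multiplication we still have, at each fixed power of $\hbar$, only finitely many powers of $u$. So $H$ is admissible, and the one subtlety is simply bookkeeping that $[u^r]H(u,z)$ is well-defined (a finite extraction at each $\hbar$-order), which the above discussion supplies.

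\textbf{Main obstacle.} The only thing that requires care — and which I expect to be the crux — is the interplay between the operator $z\partial_z$ (equivalently $QD$) sitting inside the exponential $\cS(u\hbar\,z\partial_z)$ and the formal extraction of the coefficient of $u^r$ and of $z^m$. One must be sure that pulling out the factor $e^{u\,y(z)}$ before applying $[z^m u^r]$ is consistent with the order in which the principal identity performs these operations; concretely, in Proposition~\ref{prop:principal} the factor $e^{u\,y(z)}$ is kept explicit and the rest is packaged as $H(u,z)$, so the correct reading is that \emph{first} one forms $H(u,z)$ as a $z$-series with $u$-polynomial ($\hbar$-graded) coefficients and only \emph{then} invokes the identity — there is no circularity because $QD=z\partial_z$ is just a $z$-differential operator not involving $u$ as a variable, only as the scalar parameter multiplying it. Once this is phrased carefully, the corollary is immediate. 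I would therefore devote the bulk of the write-up to (i) the factorization $e^{u\cS(u\hbar\,z\partial_z)y}=e^{u\,y}e^{u(\cS(u\hbar\,z\partial_z)-1)y}$ and the substitution $z\partial_z=QD$, and (ii) the verification that the resulting $H$ meets the admissibility hypothesis of Proposition~\ref{prop:principal}, after which the result follows by direct substitution.
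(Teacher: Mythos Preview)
Your proposal is correct and follows essentially the same approach as the paper: the paper simply says ``Applying this identity to a function of the form $H(u,z)=\frac{e^{u(\cS(u\,\hbar\,Q\,D)-1)y(z)}}{u\,\cS(u\,\hbar)}f(u,z)$ we conclude'' and states the corollary, which is exactly your plan of factoring out $e^{u\,y(z)}$ and invoking Proposition~\ref{prop:principal}. Your verification that $H$ meets the admissibility hypothesis (using the zero free term of $f$ to cancel the pole $1/u$, and the $\hbar^2$-grading of $\cS-1$ to ensure polynomiality in $u$ at each $\hbar$-order) is more explicit than what the paper writes, but it is the implicit content of the hypothesis on $f$ in the corollary.
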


\subsection{Proof of the principal identity}
The proof of the principal identity is split into several lemmata.


\begin{lemma}\label{lem:L1}	
Let $\Phi(y)$ and $H(u)$ be arbitrary two regular series. Then
\begin{equation}\label{eq:L1}
\sum_{r=0}^\infty \partial_y^r\Phi(y)\bigm|_{y=0} [u^r]e^{u y}H(u)=
\sum_{r=0}^\infty \partial_y^r\Phi(y) \;[u^r]  H(u).
\end{equation}
\end{lemma}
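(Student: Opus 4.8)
\textbf{Plan for the proof of Lemma~\ref{lem:L1}.} The statement is a purely formal identity about extracting Taylor coefficients, and the plan is to prove it by reducing both sides to the same double sum over monomials. First I would fix notation: write $\Phi(y)=\sum_{a\ge 0}\Phi_a y^a$ and $H(u)=\sum_{b\ge 0}H_b u^b$, so that $\partial_y^r\Phi(y)|_{y=0}=r!\,\Phi_r$ and $[u^r]H(u)=H_r$. Then the right hand side of~\eqref{eq:L1} is simply $\sum_{r\ge 0} r!\,\Phi_r H_r$, a single diagonal sum. The whole content of the lemma is that the left hand side produces exactly the same thing, and the role of the factor $e^{uy}$ is just bookkeeping that gets undone by the coefficient extraction.

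For the left hand side I would expand $e^{uy}H(u)=\sum_{c\ge 0}\sum_{b\ge 0}\frac{y^c u^c}{c!}H_b u^b=\sum_{r\ge 0}u^r\sum_{b+c=r}\frac{y^c}{c!}H_b$, so that $[u^r]e^{uy}H(u)=\sum_{c=0}^r \frac{H_{r-c}}{c!}\,y^c$. Applying $\partial_y^r(\cdot)|_{y=0}$ to $\Phi(y)$ times this polynomial picks out, by the Leibniz rule, the terms where all $r$ derivatives are distributed between $\Phi$ and the single monomial $y^c$; only $c\le r$ contributes and one gets $\sum_{c=0}^r \binom{r}{c}\big(\partial_y^{r-c}\Phi(y)|_{y=0}\big)\big(\partial_y^{c}y^c\big)\frac{H_{r-c}}{c!}=\sum_{c=0}^r \binom{r}{c}(r-c)!\,\Phi_{r-c}\,c!\,\frac{H_{r-c}}{c!}=\sum_{c=0}^r \frac{r!}{c!}\,(r-c)!\,\Phi_{r-c}H_{r-c}$. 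Hmm — this is where a little care is needed, so let me instead organize the computation more cleanly: rather than fighting the binomial coefficients, I would substitute the expansion of $e^{uy}$ directly into the sum over $r$ on the left and reorganize, writing the left side as $\sum_{r\ge 0}\partial_y^r\Phi(y)|_{y=0}\sum_{b+c=r}\frac{y^c}{c!}\,[u^b]H(u)$ and then regrouping so that each fixed pair $(b,c)$ contributes a term; the key simplification is that $\partial_y^r(\Phi(y)\cdot \tfrac{y^c}{c!})|_{y=0}$ with $r=b+c$ equals $\partial_y^b\Phi(y)|_{y=0}$ exactly (all other Leibniz terms vanish at $y=0$ because either a factor $y$ survives or a negative power is needed), which collapses the left side to $\sum_{b\ge 0}\partial_y^b\Phi(y)|_{y=0}\,[u^b]H(u)$, i.e. the right side.

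The cleanest route, which I would actually write out, avoids coefficient-chasing altogether: observe that for any formal series $\Phi$, the operation $\Phi\mapsto \sum_{r\ge 0}\partial_y^r\Phi(y)|_{y=0}\,c_r$ (for a sequence $c_r$) is $\mathbb{C}$-linear in $\Phi$ and, applied to $\Phi(y)=y^a$, gives $a!\,c_a$. Hence $\sum_{r\ge 0}\partial_y^r\Phi(y)|_{y=0}[u^r]\big(e^{uy}H(u)\big)$, viewed as a function of the series whose $u^r$-coefficient is supplied, is determined by its values on $\Phi(y)=y^a$; for $\Phi(y)=y^a$ the left side is $a!\,[u^a]\big(e^{uy}H(u)\big)\big|$ — but here $y$ still appears, so this shortcut needs the extra remark that we may equally treat $y$ as the same formal variable throughout, which is exactly the statement being proved. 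Given that subtlety, I expect the honest Leibniz computation of the previous paragraph to be the real argument, and the \textbf{main (minor) obstacle} is simply verifying that $\partial_y^{b+c}\big(\Phi(y)\,y^c\big)\big|_{y=0}=\partial_y^{b}\Phi(y)\big|_{y=0}$; this is an elementary consequence of the Leibniz rule together with the fact that $\partial_y^k(y^c)|_{y=0}=c!\,\delta_{k,c}$, so no genuine difficulty arises — the lemma is a formal identity whose proof is a short reorganization of sums.
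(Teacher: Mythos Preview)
Your proposal misreads the statement: the right-hand side of~\eqref{eq:L1} is \emph{not} evaluated at $y=0$. Both sides are functions of~$y$. On the left, $\partial_y^r\Phi(y)|_{y=0}$ is a number but the factor $[u^r]e^{uy}H(u)$ still depends on~$y$; on the right, $\partial_y^r\Phi(y)$ keeps its argument~$y$ while $[u^r]H(u)$ is a number. Consequently your claimed target ``$\sum_{r\ge 0} r!\,\Phi_r H_r$'' is wrong, and the Leibniz computation you propose --- differentiating a product $\Phi(y)\cdot y^c$ and evaluating at $0$ --- is not what the left side is: no derivative ever acts on the factor $[u^r]e^{uy}H(u)$. (As an aside, your asserted identity $\partial_y^{b+c}\big(\Phi(y)\,y^c\big)\big|_{y=0}=\partial_y^{b}\Phi(y)\big|_{y=0}$ is also false; Leibniz gives $\tfrac{(b+c)!}{b!}\,\partial_y^b\Phi(y)|_{y=0}$.)

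The correct argument, which is the paper's, is one step away from the expression you did write correctly in your middle paragraph. From
\[
\sum_{r\ge 0}\partial_y^r\Phi(y)\bigm|_{y=0}\;\sum_{b+c=r}\frac{y^c}{c!}\,[u^b]H(u)
=\sum_{b\ge 0}[u^b]H(u)\;\sum_{c\ge 0}\partial_y^{b+c}\Phi(y)\bigm|_{y=0}\,\frac{y^c}{c!},
\]
the inner sum over $c$ is precisely the Taylor expansion of the function $\partial_y^b\Phi$ about the origin, hence equals $\partial_y^b\Phi(y)$, and one obtains $\sum_{b\ge 0}\partial_y^b\Phi(y)\,[u^b]H(u)$, the right-hand side. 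So the key identity is Taylor's formula, not Leibniz.
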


\begin{proof} We have:
	\begin{align}
	\sum_{r=0}^\infty \partial_y^r\Phi(y)\bigm|_{y=0} [u^r]e^{u y}H(u) & =
	\sum_{r,k=0}^\infty \partial_y^{r+k}\Phi(y)\bigm|_{y=0}
	\Bigl([u^{k}]e^{u y}\Bigr)\Bigl([u^r]H(u)\Bigr)\\ \notag
	& =
	\sum_{r,k=0}^\infty \partial_y^{r+k}\Phi(y)\bigm|_{y=0}\frac{y^k}{k!}\;
	[u^r] H(u) \\ \notag
	& =
	\sum_{r=0}^\infty \partial_y^r\Phi(y)\;  [u^r]H(u).
	\end{align}
\end{proof}



\begin{lemma}[see \cite{Kaz2020}]	
We have:
\begin{align}
\phi_m(y,\hbar)&=e^{m\psi(y)}L_0(m,y,\hbar),\\ \label{eq:dphi_eL}
\partial_y^r\phi_m(y,\hbar)
&=e^{m\psi(y)}L_r(m,y,\hbar).
\end{align}
\end{lemma}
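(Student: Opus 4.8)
The plan is to prove the two identities $\phi_m(y,\hbar)=e^{m\psi(y)}L_0(m,y,\hbar)$ and $\partial_y^r\phi_m(y,\hbar)=e^{m\psi(y)}L_r(m,y,\hbar)$ separately, deducing the second from the first essentially by the very definition of $L_r$ in~\eqref{eq:Lrdef}. Indeed, once the first identity is known, write $\phi_m(y,\hbar)=e^{m\psi(y)}L_0(m,y,\hbar)$ and apply $\partial_y^r$; using $\partial_y^r\bigl(e^{m\psi(y)}g(y)\bigr)=e^{m\psi(y)}\bigl(\partial_y+m\psi'(y)\bigr)^r g(y)$ (a one-line induction on $r$) together with the second defining expression $L_r(v,y,\hbar)=\bigl(\partial_y+v\psi'(y)\bigr)^rL_0(v,y,\hbar)$ at $v=m$, we immediately get $\partial_y^r\phi_m(y,\hbar)=e^{m\psi(y)}L_r(m,y,\hbar)$. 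So the whole content is in the $r=0$ case.

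For the $r=0$ identity, I would start from the definition $\phi_m(y)=\exp\bigl(\sum_{i=1}^m\psi(y+\tfrac{2i-m-1}{2}\hbar)\bigr)$ for $m>0$, so that $\log\phi_m(y)=\sum_{i=1}^m\psi\bigl(y+(i-\tfrac{m+1}{2})\hbar\bigr)$. The shifts $i-\tfrac{m+1}{2}$ for $i=1,\dots,m$ are exactly the $m$ numbers $-\tfrac{m-1}{2},-\tfrac{m-3}{2},\dots,\tfrac{m-1}{2}$, i.e.\ an arithmetic progression symmetric about $0$ with step $1$. The key computation is to recognize this symmetric sum of shifted copies of $\psi$ as the operator $\dfrac{\cS(m\hbar\partial_y)}{\cS(\hbar\partial_y)}$ applied to $\psi$. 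Concretely, for any formal series $f$,
\[
\sum_{i=1}^m f\bigl(y+(i-\tfrac{m+1}{2})\hbar\bigr)=\sum_{i=1}^m e^{(i-\frac{m+1}{2})\hbar\partial_y}f(y)=\Bigl(e^{-\frac{m-1}{2}\hbar\partial_y}\tfrac{e^{m\hbar\partial_y}-1}{e^{\hbar\partial_y}-1}\Bigr)f(y)=\frac{e^{\frac{m}{2}\hbar\partial_y}-e^{-\frac{m}{2}\hbar\partial_y}}{e^{\frac12\hbar\partial_y}-e^{-\frac12\hbar\partial_y}}f(y)=\frac{\cS(m\hbar\partial_y)}{\cS(\hbar\partial_y)}f(y),
\]
using the geometric series identity and multiplying numerator and denominator by $e^{-\frac{m-1}{2}\hbar\partial_y}$, and finally the definition~\eqref{eq:Sdef} of $\cS$. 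Hence $\log\phi_m(y)=\dfrac{\cS(m\hbar\partial_y)}{\cS(\hbar\partial_y)}\psi(y)$, and adding and subtracting $m\psi(y)$ (noting $\dfrac{\cS(m\hbar\partial_y)}{\cS(\hbar\partial_y)}$ has constant term $m$ when acting on the constant function, or rather that the discrepancy is $m(\tfrac{\cS(m\hbar\partial_y)}{\cS(\hbar\partial_y)}-1)\psi$ shifted appropriately) gives $\log\phi_m(y)=m\psi(y)+m\bigl(\tfrac1m\tfrac{\cS(m\hbar\partial_y)}{\cS(\hbar\partial_y)}-1\bigr)\psi(y)$; comparing with $L_0(v,y,\hbar)=\exp\bigl(v(\tfrac{\cS(v\hbar\partial_y)}{\cS(\hbar\partial_y)}-1)\psi(y)\bigr)$ at $v=m$ yields $\phi_m=e^{m\psi}L_0(m,\cdot,\hbar)$ exactly. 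The cases $m=0$ (both sides equal $1$, since $L_0(0,y,\hbar)=1$) and $m<0$ (where $\phi_m=\phi_{-m}^{-1}$ and one checks $L_0(-v,y,\hbar)=L_0(v,y,\hbar)^{-1}$ because $v\mapsto v\cS(v\hbar\partial_y)$ is odd... actually $v\cS(v\hbar\partial_y)$ is even in $v$, so $v(\tfrac{\cS(v\hbar\partial_y)}{\cS(\hbar\partial_y)}-1)$ is odd in $v$) follow by the same bookkeeping.

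The main obstacle, such as it is, is purely a matter of careful formal-series bookkeeping: making sure the operator identity $\sum_{i=1}^m e^{(i-\frac{m+1}{2})\hbar\partial_y}=\cS(m\hbar\partial_y)/\cS(\hbar\partial_y)$ is interpreted correctly as an equality of formal differential operators in $\hbar\partial_y$ (both sides being even power series in $\hbar\partial_y$ with the same constant term $m$), and that the passage from $\sum_{i=1}^m\psi(\dots)$ to the exponent of $L_0$ accounts correctly for the split into the $m\psi(y)$ piece and the remainder. Since the paper attributes the lemma to~\cite{Kaz2020}, I expect the actual write-up to be short: state the operator identity, verify it via the geometric sum, substitute $f=\psi$, and read off $L_0$; then get the $\partial_y^r$ version by the conjugation identity for $\partial_y^r\circ e^{m\psi}$. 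No genuinely hard step is involved — it is a computation that just needs to be organized cleanly.
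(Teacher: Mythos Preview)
Your overall strategy is exactly right, and the paper itself gives no proof (it just cites~\cite{Kaz2020}), so there is nothing to compare against structurally. The reduction of the $r>0$ case to $r=0$ via $\partial_y^r\bigl(e^{m\psi}g\bigr)=e^{m\psi}(\partial_y+m\psi')^r g$ together with the definition~\eqref{eq:Lrdef} is clean and correct, and so is the handling of $m=0$ and $m<0$ via the oddness of $v\bigl(\tfrac{\cS(v\hbar\partial_y)}{\cS(\hbar\partial_y)}-1\bigr)$ in~$v$.

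However, there is a genuine slip in your key operator identity. From~\eqref{eq:Sdef} one has $e^{z/2}-e^{-z/2}=z\,\cS(z)$, so
\[
\frac{e^{\frac{m}{2}\hbar\partial_y}-e^{-\frac{m}{2}\hbar\partial_y}}{e^{\frac12\hbar\partial_y}-e^{-\frac12\hbar\partial_y}}
=\frac{m\hbar\partial_y\,\cS(m\hbar\partial_y)}{\hbar\partial_y\,\cS(\hbar\partial_y)}
= m\,\frac{\cS(m\hbar\partial_y)}{\cS(\hbar\partial_y)},
\]
not $\dfrac{\cS(m\hbar\partial_y)}{\cS(\hbar\partial_y)}$. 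Consequently $\log\phi_m(y)=m\,\dfrac{\cS(m\hbar\partial_y)}{\cS(\hbar\partial_y)}\psi(y)$, and the split is simply
\[
\log\phi_m(y)=m\psi(y)+m\Bigl(\tfrac{\cS(m\hbar\partial_y)}{\cS(\hbar\partial_y)}-1\Bigr)\psi(y),
\]
which matches $\log L_0(m,y,\hbar)$ on the nose. Your version, with the extra $\tfrac1m$ inside the bracket, does \emph{not} match $L_0(m,\cdot,\hbar)$ (and your parenthetical about the constant term being~$m$ is in fact a symptom of this: $\cS(0)/\cS(0)=1$, so it is the factor~$m$ out front that supplies the correct leading term). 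Once this factor is restored, the argument goes through without any further adjustment.
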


We also need a certain form of what is known as the Lagrange-B\"urmann formula:
\begin{lemma}For any Laurent series $H$ in $z$ and for any $m\in\Z$ we have	
\begin{equation}\label{eq:LagBur}
[z^m] e^{m\psi(y)} H = [X^m] \frac{1}{Q}H,
\end{equation}
and, therefore,
\begin{equation}\label{eq:LagBur1}
\sum_{m=-\infty}^\infty X^m [z^m] e^{m\psi(y)} H = \frac{1}{Q}H,
\end{equation}
where $y=y(z)$ and the function on the right hand side is regarded as a Laurent series in~$X$ though the change inverse to~\eqref{eq:Xdef}.
\end{lemma}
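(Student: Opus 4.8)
\textbf{Proof proposal for the Lagrange--Bürmann lemma \eqref{eq:LagBur}.}

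The plan is to recognize this as the residue form of the Lagrange--Bürmann inversion theorem applied to the change of variables $X = X(z) = z\,e^{-\psi(y(z))}$, which is a well-defined invertible formal substitution since $X(z) = z + O(z^2)$. The key observation is that extracting the coefficient of $z^m$ can be written as a contour integral (formal residue): for any Laurent series $G(z)$ one has $[z^m]\,G(z) = \res_{z=0}\, G(z)\,z^{-m}\,\frac{dz}{z}$, and similarly $[X^m]\,F(X) = \res_{X=0}\,F(X)\,X^{-m}\,\frac{dX}{X}$. Since the residue is invariant under the formal change of variable $X = X(z)$, the strategy is to transport the right-hand side integral from the $X$-variable to the $z$-variable and check that the Jacobian factors combine to produce exactly $e^{m\psi(y(z))}$.

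First I would write the right-hand side as
\begin{equation}
[X^m]\,\frac{1}{Q}\,H = \res_{z=0}\left(\frac{1}{Q}\,H\right) X(z)^{-m}\,\frac{dX}{X},
\end{equation}
using that the residue is coordinate-independent. Next I would compute $\frac{dX}{X} = \frac{dX/dz}{X}\,dz = Q\,\frac{dz}{z}$, which is precisely the defining relation \eqref{eq:Qdef} for $Q$, namely $Q = \frac{z}{X}\frac{dX}{dz}$. Substituting this in, the factor $\frac{1}{Q}$ cancels against the Jacobian $Q$, leaving
\begin{equation}
\res_{z=0}\, H \cdot X(z)^{-m}\,\frac{dz}{z}
= \res_{z=0}\, H \cdot \bigl(z\,e^{-\psi(y(z))}\bigr)^{-m}\,\frac{dz}{z}
= \res_{z=0}\, H\, e^{m\psi(y(z))}\, z^{-m}\,\frac{dz}{z},
\end{equation}
which is exactly $[z^m]\bigl(e^{m\psi(y)}H\bigr)$, proving \eqref{eq:LagBur}. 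Then \eqref{eq:LagBur1} follows immediately by multiplying by $X^m$ and summing over all $m \in \Z$: the left-hand side becomes $\sum_m X^m [z^m]\bigl(e^{m\psi(y)}H\bigr)$, and by \eqref{eq:LagBur} each term equals $X^m [X^m]\bigl(\frac{1}{Q}H\bigr)$, whose sum reconstructs the full Laurent series $\frac{1}{Q}H$ regarded in the $X$-variable via the inverse of \eqref{eq:Xdef}.

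The only real subtlety, rather than a genuine obstacle, is making sure the formal manipulations are legitimate when $H$ is merely a Laurent series in $z$ (possibly with infinitely many negative-degree terms being disallowed, but finitely many negative powers allowed) and when $\psi$, $y$ are formal power series: one must check that $X(z) = z\,e^{-\psi(y(z))}$ with $y(z) = s_1 z + O(z^2)$ is an honest invertible substitution on the ring of Laurent series with bounded-below order, so that coefficient extraction in $X$ on the right-hand side is well-defined and the residue-transport argument applies term by term in each homogeneous degree. Since $\psi(y(z))$ has no constant term (as $y(0)=0$ and $\psi(0)=0$), the leading behaviour $X(z)=z+\cdots$ guarantees invertibility, and everything goes through; this is the standard setting in which the Lagrange--Bürmann formula holds, so no new difficulty arises.
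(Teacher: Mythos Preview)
Your proof is correct and essentially identical to the paper's own argument: both express coefficient extraction as a formal residue and use the relation $\tfrac{dX}{X}=Q\,\tfrac{dz}{z}$ together with $X(z)^{-m}=z^{-m}e^{m\psi(y(z))}$ to pass between the two sides. The only cosmetic difference is that the paper runs the chain of equalities from the left-hand side to the right-hand side, whereas you start from the right-hand side and arrive at the left.
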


\begin{proof} We have:
	\begin{equation}
	[z^m] e^{m\psi(y)} H =
	\res_{z=0}  \frac{e^{m\psi(y)} H}{z^{m+1}}dz=\\
	\res_{z=0}  \frac{H}{z\,X^{m}}dz=
	\res_{z=0}  \frac{H}{Q\,X^{m+1}}dX= [X^m] \frac{1}{Q}H.
	\end{equation}
\end{proof}

Now we are ready to prove the principal identity.

\begin{proof}[Proof of proposition \ref{prop:principal}] We have:
\begin{align}
\sum_{m=-\infty}^\infty X^{m}&\sum_{r=0}^\infty
\partial_y^{r}\phi_{m}(y)\bigm|_{y=0}[z^mu^r]
 e^{u\,y(z)}H(u,z)
\\ \notag
\mathop{=}^{\eqref{eq:L1}}&
\sum_{m=-\infty}^\infty X^m[z^m]\sum_{r=0}^\infty [u^r]\partial_y^r\phi_m(y,\hbar)\bigm|_{y=y(z)}H(u,z)
\\ \notag
\mathop{=}^{\eqref{eq:dphi_eL}}&\sum_{m=-\infty}^\infty X^m[z^m]\sum_{r=0}^\infty [u^r]
e^{m \psi(y(z))}L_r(m,y(z),\hbar)\;H(u,z)
\\ \notag
=&\sum_{j=0}^\infty D^j\sum_{m=-\infty}^\infty X^m[z^m]\sum_{r=0}^\infty [u^r]
e^{m \psi(y(z))}[v^j]L_r(v,y(z),\hbar)\;H(u,z)
\\ \notag
\mathop{=}^{\eqref{eq:LagBur1}}&\sum_{j=0}^\infty D^j\sum_{r=0}^\infty
\frac{[v^j]L_r(v,y(z),\hbar)}{Q}[u^r]\;
H(u,z)
\end{align}
(here we consider $X$ and $z$ as independent variables).
\end{proof}

\subsection{A closed formula for \texorpdfstring{$D_1\dots D_n H_n$}{D1...DnHn}}

The principal identity together with Corollary~\ref{cor:preDHgn} lead to the first our theorem, which is just one step away from the main result formulated in the next section:

\begin{theorem}\label{prop:mainprop}	For $n\geq 2$, $(g,n)\neq(0,2)$ we have
\begin{equation}\label{eq:mainprop}
D_1\dots D_nH_{g,n}=[\hbar^{2g-2+n}]
U_n\dots U_1
\sum_{\gamma \in \Gamma_n}\prod_{\{v_k,v_\ell\}\in E_\gamma} w_{k,\ell},
\end{equation}
where
\begin{equation}\label{eq:wkl}
w_{k,\ell}=e^{\hbar^2u_ku_\ell\cS(u_k\hbar\,Q_k D_k)\cS(u_\ell\hbar\,Q_\ell D_\ell)
	\frac{z_k z_\ell}{(z_k-z_\ell)^2}}-1
\end{equation}
and $U_{i}$ is the operator of Proposition~\ref{prop:principal} acting on a function~$f$ in~$u_i$ and $z_i$ by
\begin{equation}\label{Uioperator}
U_{i} f=\sum_{j,r=0}^\infty D_i^j\left(\frac{[v^j]L_r(v,y(z_i),\hbar)}{Q_i}
[u_i^r] \frac{e^{u_i(\cS(u_i\,\hbar\,Q_i\,D_i)-1)y(z_i)}}{u_i\hbar\,\cS(u_i\,\hbar)}f(u_i,z_i)\right).
\end{equation}
As before, the sum is over all connected simple graphs 
on $n$ labeled vertices.

For fixed $g$ and $n$, after taking the coefficient $[\hbar^{2g-2+n}]$, all sums in this formula for $\left(\prod_{i=1}^n D_i\right) \Hc_{g,n}$ become finite, and it becomes a rational expression in $z_1,\ldots,z_n$ and the derivatives of the functions $y_i=y(z_i)$ and $\psi(y_i)$.
\end{theorem}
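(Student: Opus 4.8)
The plan is to derive \eqref{eq:mainprop} by plugging the principal identity (in the form of Corollary~\ref{cor:principal}) into the graph-sum formula for $\widehat{DH}_n$ from \eqref{eq:hatDH}, and then controlling finiteness. First I would recall from Corollary~\ref{cor:preDHgn} that for $(g,n)\neq(0,2)$ we have
\[
DH_{g,n}=[\hbar^{2g-2+n}]\,U_n\cdots U_1\sum_{\gamma\in\Gamma_n}\prod_{\{v_k,v_\ell\}\in E_\gamma}\Bigl(e^{\hbar^2 u_k u_\ell\,\cS(u_k\hbar\,z_k\partial_{z_k})\cS(u_\ell\hbar\,z_\ell\partial_{z_\ell})\frac{z_k z_\ell}{(z_k-z_\ell)^2}}-1\Bigr),
\]
where $U_i$ is the transformation of the previous subsection acting on the $(u_i,z_i)$-variables. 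The key structural point is that the argument on which each $U_i$ acts, viewed as a function of $u_i$, is a power series in $u_i$ \emph{with zero constant term} after pulling out the overall $1/(u_i\hbar\cS(u_i\hbar))$, and its $u_i$-coefficients are Laurent series in $z_i$ (in the sector $|z_1|\ll\cdots\ll|z_n|\ll1$). This is exactly the hypothesis under which Corollary~\ref{cor:principal} applies, so I would apply it one variable at a time.

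The next step is the substitution $z_i\partial_{z_i}\rightsquigarrow Q_i D_i$ inside the exponents. Recall \eqref{eq:zdzQD}: under the change $X_i=X(z_i)$ we have $z_i\partial_{z_i}=Q_iD_i$. So before applying $U_i$ to the innermost $u_i$-dependent factor, I would rewrite every occurrence of $z_i\partial_{z_i}$ appearing in $\cS(u_i\hbar\,z_i\partial_{z_i})$—both in the edge-factors $w_{k,\ell}$ and in the factor $e^{u_i\cS(u_i\hbar\,z_i\partial_{z_i})y_i}/(u_i\hbar\cS(u_i\hbar))$ coming from the definition of $U_i$—as $Q_iD_i$. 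When this $u_i$-dependent prefactor is combined with the $L_r$-weight in \eqref{eq:principal}, the exponential $e^{u_i\cS(u_i\hbar Q_iD_i)y_i}$ partly cancels against the $e^{u_i\cdot 1\cdot y_i}$ already present in the principal identity, leaving precisely $e^{u_i(\cS(u_i\hbar Q_iD_i)-1)y_i}$, which is the form displayed in \eqref{Uioperator}. Applying Corollary~\ref{cor:principal} successively for $i=1,\dots,n$ and bookkeeping the edge-factors (which only involve $Q_iD_i$, $\cS$, $u_i$ and the Bergman kernel $z_kz_\ell/(z_k-z_\ell)^2$, none of which interfere with the principal-identity manipulation in variable $i$) yields exactly \eqref{eq:mainprop} with $w_{k,\ell}$ as in \eqref{eq:wkl}.

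It remains to argue finiteness and rationality after extracting $[\hbar^{2g-2+n}]$. Here I would use three facts: (i) every connected simple graph $\gamma$ on $n$ vertices has at least $n-1$ and at most $\binom n2$ edges, so $\Gamma_n$ is a finite set; (ii) each edge-factor $w_{k,\ell}$ contributes a positive power $\hbar^{2}$ times (series in $\hbar^2$), and $\cS(u\hbar)=1+O(\hbar^2)$, $L_r(v,y,\hbar)=$ (polynomial in $v$ and $\psi$-derivatives)$\cdot(1+O(\hbar^2))$, so that the whole expression is a series in $\hbar^2$ with each $\hbar$-coefficient having only finitely many nonzero $u_i$-powers; (iii) since the $u_i$-degree needed is bounded (by the $\hbar$-order), the sums over $r_i$ and $j_i$ in each $U_i$ truncate. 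Finally, each surviving summand is built from $Q_i^{-1}$, $D_i$, derivatives of $y_i$ and of $\psi(y_i)$, and the rational functions $z_kz_\ell/(z_k-z_\ell)^2$, hence is a rational expression in $z_1,\dots,z_n$ and these derivatives; taking $[\hbar^{2g-2+n}]$ picks out finitely many such terms. The main obstacle I anticipate is the careful verification in the substitution step that the non-commuting operators—$Q_iD_i$ (which does not commute with multiplication by functions of $z_i$) appearing both inside the $\cS$-exponentials and as the outer $D_i^{j_i}$ in \eqref{Uioperator}—are organized in exactly the right order so that Corollary~\ref{cor:principal} applies verbatim in each variable, in particular that the edge-factors $w_{k,\ell}$ may be treated as "constants" (in $u_i$) with operator-valued coefficients when running the principal identity in the $i$-th variable, and that the sector expansion $|z_1|\ll\cdots\ll|z_n|\ll1$ is preserved throughout so that $z_kz_\ell/(z_k-z_\ell)^2$ retains its meaning.
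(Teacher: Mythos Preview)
Your proposal is correct and follows essentially the same route as the paper: start from Corollary~\ref{cor:preDHgn}, observe that each edge-factor $w_{k,\ell}$ is divisible by $u_ku_\ell$ so Corollary~\ref{cor:principal} applies in each variable, and then read off finiteness from the $\hbar$- and $u_i$-grading. Your concern about operator ordering is unnecessary, since $z_i\partial_{z_i}=Q_iD_i$ is a literal identity of operators (Equation~\eqref{eq:zdzQD}), not a substitution requiring care; otherwise your argument matches the paper's, with the only difference being that you spell out the finiteness mechanism in more detail than the paper does.
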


The coefficient of any power of~$\hbar$ in $w_{i,j}$ is a polynomial in $u_i$ and $u_j$ vanishing at $u_i=0$ and at $u_j=0$ so that Corollary~\ref{cor:principal} can be applied.
The restriction $n>1$ is imposed because in the case $n=1$ the operator $U_1$ is applied to the constant function~$1$, which is not divisible by~$u_1$, so the conclusion of Corollary~\ref{cor:principal} does not hold. The requirement $(g,n)\ne(0,2)$ is a consequence of Corollary~\ref{prop:H_corr}. The cases $n=1$ and $(g,n)=(0,2)$ are treated in Section~\ref{sec:exceptions} separately.

The (very important) finiteness statement is evident from the way $\hbar$ and $u_i$ enter the expression.

A nice property of the equality of Theorem~\ref{prop:mainprop} (that does not hold for earlier equalities of Proposition~\ref{prop:Hnconn} and Corollary~\ref{cor:preDHgn})
is that it can be applied without expanding involved functions in Laurent series and is valid in the ring $\mathbb{C}[[z_1,\dots,z_n]][(z_i-z_j)^{-1};i,j=1,\ldots,n\}]$ of functions with finite order poles on the diagonals $z_i=z_j$.

\begin{remark} \label{rem:DHdeformed}
	Note that the statement of Theorem~\ref{prop:mainprop} still holds if one allows $\psi(z)$ and $y(z)$ to also be formal series in $\hbar^2$. More precisely, an analogous statement can be proved in a very similar way if one puts
	\begin{align}
	\psi(\hbar^2,y)&:=\sum_{k=1}^\infty\sum_{m=0}^\infty c_{k,m} y^k\hbar^{2m},\\
	y(\hbar^2,z)&:=\sum_{k=1}^\infty\sum_{m=0}^\infty s_{k,m} z^k\hbar^{2m},
	\end{align}
	while still keeping the formula for $X(z)$ free of $\hbar$, i.e. using
	\begin{equation}
	X(z)=\left.z\,e^{-\psi(y(z))}\right|_{\hbar=0}
	\end{equation}
	in place of \eqref{eq:X(z)}, see~\cite[Section 2]{BDKS21toprec}.
\end{remark}

Theorem~\ref{prop:mainprop} has an important corollary:
\begin{corollary}\label{cor:diagonal}
	All diagonal poles (i.e. poles at $z_i=z_j$ for $i\neq j$) on the right hand side of \eqref{eq:mainprop} cancel out.
\end{corollary}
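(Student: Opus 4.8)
The plan is to deduce Corollary~\ref{cor:diagonal} from the fact that the left-hand side of~\eqref{eq:mainprop}, namely $D_1\dots D_nH_{g,n}$, is manifestly a formal power series in $z_1,\dots,z_n$ with no poles whatsoever on the diagonals. Indeed, by its very definition $H_{g,n}=\sum_{m_i\ge1}h_{g,(m_1,\dots,m_n)}X_1^{m_1}\cdots X_n^{m_n}$ is a genuine power series in the $X_i$, and after the substitution $X_i=X(z_i)$ with $X(z)=z\,e^{-\psi(y(z))}=z+O(z^2)$ it becomes a power series in $z_1,\dots,z_n$; applying the operators $D_i$ (which act diagonally on monomials) does not change this. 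So the first step is simply to record that $D_1\dots D_nH_{g,n}\in\C[[z_1,\dots,z_n]]$, in particular it has no diagonal poles.

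Next I would observe that, by the finiteness statement in Theorem~\ref{prop:mainprop}, the right-hand side of~\eqref{eq:mainprop}, after extracting $[\hbar^{2g-2+n}]$, is a finite sum of terms each of which is a rational function in $z_1,\dots,z_n$ whose only possible poles are on the diagonals $z_i=z_j$. This is because each building block is either a derivative of $y(z_i)$ or of $\psi(y(z_i))$ (both regular in $z_i$ near the origin), or the function $\frac{z_kz_\ell}{(z_k-z_\ell)^2}$ and its $Q_kD_k$, $Q_\ell D_\ell$ derivatives (which have poles only on $z_k=z_\ell$), combined with the operators $D_i$ and $U_i$ that preserve this class — this is exactly the assertion at the end of the statement of Theorem~\ref{prop:mainprop}, together with the remark that the equality is valid in the ring $R=\C[[z_1,\dots,z_n]][(z_i-z_j)^{-1}]$. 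Hence the right-hand side is an element of $R$.

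The third step is to combine the two: an element of $R$ that equals a genuine power series must in fact lie in $\C[[z_1,\dots,z_n]]$, i.e.\ all the apparent diagonal poles must cancel in the total sum. Concretely, write the right-hand side as $P(z_1,\dots,z_n)/\prod_{i<j}(z_i-z_j)^{N_{ij}}$ in lowest terms for suitable $N_{ij}\ge0$; equality with the pole-free series $D_1\dots D_nH_{g,n}$ forces each $N_{ij}=0$. This is the content of the corollary.

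The only genuine point to be careful about — and the step I would expect to require the most attention — is the justification that the equality in~\eqref{eq:mainprop} really does hold as an identity in $R$ (rather than merely as an identity of asymptotic expansions in a sector $|z_1|\ll\dots\ll|z_n|\ll1$, in which diagonal poles are invisible because one has expanded $\frac{z_kz_\ell}{(z_k-z_\ell)^2}$ in a geometric series). But this is precisely what the text already asserts right after the statement of Theorem~\ref{prop:mainprop} (''it can be applied without expanding involved functions in Laurent series and is valid in the ring $\C[[z_1,\dots,z_n]][(z_i-z_j)^{-1};i,j=1,\dots,n]$''), so for the purposes of this corollary it may be invoked directly; the cancellation of diagonal poles is then an immediate formal consequence.
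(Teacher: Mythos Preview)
Your proposal is correct and takes essentially the same approach as the paper's own proof, which is a single sentence: the left-hand side manifestly has no diagonal poles, so they must cancel on the right. You have simply expanded this observation with the necessary care about the ring $R$ in which the equality holds, which the paper treats as already established.
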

\begin{proof}
	Indeed, the left hand side manifestly does not have diagonal poles, thus they must cancel out on the right hand side.
\end{proof}

At the end of this section we state several reformulations of Theorem~\ref{prop:mainprop}. First, substituting the definitions of $w_{k,\ell}$ and $U_k$ to~\eqref{eq:mainprop}, we get, explicitly,
\begin{align}\label{eq:Hlongexpr0}
& D_1\dots D_nH_{g,n}=[\hbar^{2g-2+n}]
\sum_{j_1,\dots,j_n,r_1,\dots,r_n=0}^\infty \left(\textstyle\prod_{i=1}^nD_i^{j_i}\right)\Biggl(
\prod_{i=1}^n
\tfrac{[v^{j_i}]L_{r_i}(v,y(z_i),\hbar)}{Q_i}[{\textstyle\prod_{i=1}^n u_i^{r_i}}]
\Biggr.\\\Biggl. \notag
&
\prod_{i=1}^n
 \frac{e^{u_i(\cS(u_i\,\hbar\,Q_i\,D_i)-1)y(z_i)}}{u_i\hbar\,\cS(u_i\,\hbar)}
\sum_{\gamma \in \Gamma_n}\prod_{\{v_k,v_\ell\}\in E_\gamma}
\left(e^{\hbar^2u_ku_\ell\cS(u_k\hbar\,Q_k D_k)\cS(u_\ell\hbar\,Q_\ell D_\ell)
	\frac{z_k z_\ell}{(z_k-z_\ell)^2}}-1\right)\Biggr).
\end{align}
Next, expanding the exponent in a series we can represent the last formula in the following even more explicit form:
\begin{align}\label{eq:Hlongexpr}
& D_1\dots D_nH_{g,n}=[\hbar^{2g-2+n}]
\sum_{j_1,\dots,j_n,r_1,\dots,r_n=0}^\infty \left(\textstyle\prod_{i=1}^nD_i^{j_i}\right)\Biggl(
\prod_{i=1}^n
\frac{[v^{j_i}]L_{r_i}(v,y(z_i),\hbar)}{Q_i}\, [{\textstyle\prod_{i=1}^n u_i^{r_i}}]
\\ \notag
& \prod_{i=1}^n
 \frac{e^{u_i(\cS(u_i\,\hbar\,Q_i\,D_i)-1)y(z_i)}}{u_i\hbar\,\cS(u_i\,\hbar)}
\sum_{\gamma\in\widetilde{\Gamma}_n} \frac{\hbar^{2|E_\gamma|}}{|\mathsf{Aut}_\gamma|} \prod_{\{v_k,v_\ell\}\in E_\gamma} u_ku_\ell \cS(u_k\hbar\,z_k\partial_{z_k})\cS(u_\ell\hbar\,z_\ell\partial_{z_\ell})\frac{ z_kz_\ell}{(z_k-z_\ell)^2}\Biggr).
\end{align}
Here $\widetilde{\Gamma}_n$
is the set of all connected graphs 
on $n$ labeled vertices $v_1,\ldots,v_n$, with multiple edges allowed but no loops 
(i.e. no edges connecting a vertex to itself). Both Equations~\eqref{eq:Hlongexpr0} and~\eqref{eq:Hlongexpr} hold for $n>1$ and $(g,n)\ne(0,2)$.

\section{General formula} \label{sec:MainTh}

In this section we prove the main theorem of the present paper, which explicitly represents $\Hc_{g,n}$ for given $g$ and $n$ in a closed form. What remains is to get rid of $D_1\cdots D_n$ which are applied in the LHS in Theorem~\ref{prop:mainprop}.

Let us introduce in a formal way the operator $D_{i}^{-1}U_i$ acting on a function $f(u_i,z_i)$ by
\begin{equation}\label{eq:DinvU}
D_i^{-1}U_{i} f=\sum_{j,r=0}^\infty D_i^{j-1}\left(\frac{[v^j]L_r(v,y(z_i),\hbar)}{Q_i}
[u_i^r] \frac{e^{u_i(\cS(u_i\,\hbar\,Q_i\,D_i)-1)y(z_i)}}{u_i\hbar\,\cS(u_i\,\hbar)}f(u_i,z_i)\right).
\end{equation}
where we \emph{define} the action of $D_i^{-1}$ on a function $w(z_i)$ by
\begin{equation}\label{eq:Dinv}
(D_i^{-1}w)(z_i)=\int_0^{z_i}\frac{Q(z)}{z}w(z)\,dz.
\end{equation}
Note that this formal definition of the operator $D_{i}^{-1}U_i$ implies that
\begin{equation}\label{eq:DDinv}
D_i\left(D_{i}^{-1}U_i f\right)=U_if.
\end{equation}
Then we set
\begin{equation}
\label{GenForm-graph}
\widetilde H_{g,n} = [\hbar^{2g-2+n}]  \left(\textstyle\prod_{i=1}^n D_i^{-1}U_i\right) \sum\limits_{\gamma\in\Gamma_n}\prod\limits_{\{v_i,v_j\}\in E_\gamma} w_{i,j},
\end{equation}
where $w_{i,j}$ is given by~\eqref{eq:wkl}.

Now we formulate the propositions needed to prove the theorem; their proofs are given below at the end of this section.

\begin{proposition}\label{prop:tildeH}
Assume that $n\ge2$ and $(g,n)\ne(0,2)$. Then each time when the operator $D_i^{-1}$ defined by~\eqref{eq:DinvU} is applied in the expression~\eqref{GenForm-graph} for $\widetilde H_{g,n}$ the corresponding integrated differential form $\frac{Q(z)}{z}w(z)\,dz$ is rational in~$z$ with possible poles at~$z=z_j$ for $j\ne i$ with zero residues. It follows that its primitive is well defined as a rational function in $z_i$ and the whole function $\widetilde H_{g,n}$ is well defined and has the form as in Theorem~\ref{th:intro} up to an additive constant.
\end{proposition}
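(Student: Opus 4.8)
The plan is to unwind the operator $D_i^{-1}U_i$ as defined in~\eqref{eq:DinvU} and to analyze the differential form $\frac{Q(z_i)}{z_i}w(z_i)\,dz_i$ that gets integrated at each application of $D_i^{-1}$. First I would observe that by the finiteness statement already established in Theorem~\ref{prop:mainprop}, after taking the coefficient $[\hbar^{2g-2+n}]$ the expression $\prod_{i=1}^n D_i^{-1}U_i \sum_{\gamma\in\Gamma_n}\prod w_{i,j}$ is a \emph{finite} sum; moreover $U_i f$ produces, for each fixed $g,n$, a finite $\mathbb{C}$-linear combination of terms of the shape $D_i^{j}\big(\tfrac{1}{Q_i}\cdot(\text{rational function})\big)$, where the rational function is built from $\tfrac{z_k}{z_i-z_k}$ (with $k\ne i$) and the derivatives $\psi^{(r)}(y_i)$ and $(z_i\partial_{z_i})^r y_i$. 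So at the innermost level the objects to be integrated are visibly rational in $z_i$, with diagonal poles at $z_i=z_j$, $j\ne i$, and the only non-rational ingredient — the integration — must be controlled.

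The key step is therefore the residue computation: I would show that for each $j\ne i$ the form $\frac{Q(z_i)}{z_i}w(z_i)\,dz_i$ has zero residue at $z_i=z_j$. The cleanest route is to process the applications of $D_i^{-1}U_i$ in a carefully chosen order, integrating out one variable at a time from the innermost vertex outward, so that at each stage the function $w(z_i)$ being integrated is already known (by the inductive hypothesis / by Theorem~\ref{prop:mainprop} applied to the already-processed variables) to be of the form $\sum_j D_i^j(P_j/Q_i)$ with $P_j$ polynomial in the listed generators. Since $D_i^{-1}D_i = \mathrm{Id}$ on functions, each term $D_i^j(P_j/Q_i)$ with $j\ge 1$ integrates trivially to $D_i^{j-1}(P_j/Q_i)$, contributing no new poles and no residue issue; the only genuinely new term is the $j=0$ piece, i.e. we must integrate $\frac{Q(z_i)}{z_i}\cdot\frac{P_0(z_i)}{Q(z_i)}\,dz_i = \frac{P_0(z_i)}{z_i}\,dz_i$, where the $Q_i$'s cancel. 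Now $P_0$ is a polynomial combination of $\frac{z_k}{z_i-z_k}$, $\psi^{(r)}(y_i)$, $(z_i\partial_{z_i})^r y_i$; the latter two are regular at $z_i=z_j$, while the poles come only from factors $\frac{z_j}{z_i-z_j}$. The residue at $z_i=z_j$ of $\frac{P_0(z_i)}{z_i}\,dz_i$ is then a polynomial in $\frac{z_k}{z_j-z_k}$ ($k\ne i,j$) and the derivatives at $z_j$, and I would identify it — using Theorem~\ref{prop:mainprop} and Corollary~\ref{cor:diagonal}, which say that the sum over $\gamma$ has \emph{no} diagonal poles after applying all $D_1\cdots D_n$ — as (a $D$-derivative of) the analogous building block for $DH_{g',n-1}$ in the remaining variables; symmetry/antisymmetry of $\frac{z_i z_j}{(z_i-z_j)^2}$ under $z_i\leftrightarrow z_j$ combined with the sum over graphs forces the coefficient of the simple pole to vanish. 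Concretely, I expect to invoke that $\operatorname{res}_{z_i=z_j}$ of the full integrand, being the $z_i$-independent "boundary" contribution, must coincide with a piece that Corollary~\ref{cor:diagonal} already guarantees cancels in the total sum.

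The main obstacle will be bookkeeping: making precise the claim that at the moment $D_i^{-1}$ is applied, the function it acts on has \emph{already} been brought to the normal form $\sum_j D_i^j(P_j/Q_i)$ with $P_j$ in the allowed generating set, so that the cancellation of $Q_i$ and the residue analysis go through. This requires choosing an ordering of the vertices (or rather, an ordering of the operators $D_i^{-1}U_i$) and running an induction on $n$ in which the statement being proved is simultaneously (a) the residue-vanishing at the current step and (b) that the output is again of the claimed normal form, so that the next step's hypothesis is met. I would set this up so that the induction base is $n=2$, $g>0$ (or $n=1$ handled separately in Section~\ref{sec:exceptions}), and the inductive step integrates out a leaf-like variable; once both (a) and (b) are in place, the primitive of a residue-free rational form is a well-defined rational function in $z_i$ (plus possibly a logarithm if a nonzero residue survived — which is exactly what we've excluded), and iterating over all $i$ yields a rational function in $z_1,\dots,z_n$ of the shape asserted in Theorem~\ref{th:intro}, with the constant $c_{g,n}$ appearing precisely as the integration constant $\int_0^{z_i}$ picks up from the $z_i$-independent part. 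Finally I would note that the additive-constant ambiguity in "$\widetilde H_{g,n}$ has the form of Theorem~\ref{th:intro} up to a constant" is genuine and will be pinned down by matching against the explicit series expansion from Section~\ref{sec:preliminary}, which is deferred to the subsequent propositions.
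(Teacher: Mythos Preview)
Your plan overcomplicates the argument and misses the structural observation that makes the paper's proof essentially a two-line computation. You correctly note that only the $j=0$ summand of $D_i^{-1}U_i$ requires an actual integration, but you then treat the integrand as a generic ``polynomial combination of $\frac{z_k}{z_i-z_k}$, $\psi^{(r)}(y_i)$, $(z_i\partial_{z_i})^r y_i$'' and embark on a residue-and-induction scheme. The point you are missing is that for $j=0$ one has $[v^0]L_r=\delta_{r,0}$, so only $r=0$ survives, and then the prefactor $\frac{e^{u_i(\cS(\cdots)-1)y_i}}{u_i\hbar\cS(u_i\hbar)}$ contributes a factor $\frac{1}{u_i\hbar}$ at leading order; the $j=0$ term therefore collapses to $D_i^{-1}\frac{1}{\hbar Q_i}[u_i^1]f$. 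Since each $w_{i,k}$ is divisible by $u_iu_k$, the linear part $[u_i^1]f$ vanishes unless $v_i$ is a leaf, in which case it equals $\hbar^2 u_k\cS(u_k\hbar Q_kD_k)\frac{z_iz_k}{(z_i-z_k)^2}$ times a factor independent of $z_i$. The integral $\int_0^{z_i}\frac{z\,dz}{(z-z_k)^2}\cdot\frac{z_k}{z}=\frac{z_i}{z_k-z_i}$ is then done by hand, and rationality is immediate. The case $n=2$ requires one extra remark (both vertices are leaves simultaneously), but the double-linear term contributes only at $g=0$, which is excluded.

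Your proposed route through Corollary~\ref{cor:diagonal} has a genuine gap: that corollary concerns cancellation of diagonal poles in the \emph{final} expression $D_1\cdots D_n H_{g,n}$, i.e.\ after all operators $U_i$ (not $D_i^{-1}U_i$) have been applied and summed over all graphs. It says nothing about residues of the partially-processed integrand at an intermediate stage, and the ``symmetry/antisymmetry'' hand-wave does not bridge this. You would need to prove a separate residue-vanishing lemma from scratch, and the induction you sketch (processing variables ``from the innermost vertex outward'') does not have a well-defined starting point since the graphs in $\Gamma_n$ are not trees in general. In short: identify the $j=0$ term explicitly via $[v^0]L_r=\delta_{r,0}$, observe it extracts $[u_i^1]$, use the $u_iu_k$-divisibility of $w_{i,k}$ to reduce to leaves, and integrate the resulting $\frac{z_iz_k}{(z_i-z_k)^2}$ directly.
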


By construction, we have $D_1\dots D_n H_{g,n}=D_1\dots D_n \widetilde H_{g,n}$. This does not imply that the functions $H_{g,n}$ and $\widetilde H_{g,n}$ are equal.

\begin{proposition}\label{prop:tHconst}
	For $n\geq 2$ and $(g,n)\ne(0,2)$, the difference between $H_{g,n}$ and $\widetilde{H}_{g,n}$ is the following constant:
	\begin{equation}\label{eq:tHconst}
	H_{g,n} = \widetilde{H}_{g,n} - (-1)^{n-1}
\psi^{(2g+n-2)}(0)[u^{2g}]\frac1{\cS^2(u)}.
	\end{equation}
\end{proposition}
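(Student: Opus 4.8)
The plan is to compare $H_{g,n}$ with $\widetilde H_{g,n}$ degree by degree, exploiting that both series have the same image under $D_1\cdots D_n$ by Theorem~\ref{prop:mainprop} and the construction \eqref{GenForm-graph}. Since $D_i$ acts on a monomial $X_1^{m_1}\cdots X_n^{m_n}$ by multiplication by $m_1\cdots m_n$, the operator $D_1\cdots D_n$ has a kernel consisting precisely of those (formal) functions in the $X_i$ for which at least one $m_i=0$, i.e. functions that, in the $z$-variables, do not depend on at least one $z_i$ (up to the subtleties of working in the ring $R$). So $H_{g,n}-\widetilde H_{g,n}$ is annihilated by $D_1\cdots D_n$, hence is a sum of functions each independent of at least one variable. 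The first step is therefore to argue that the only such term that can actually occur is a genuine constant. For $H_{g,n}$ the projection property / the explicit graph sum of Proposition~\ref{prop:Hnconn} shows $H_{g,n}$ genuinely depends on all $z_i$ (no $X_i^0$ term), while $\widetilde H_{g,n}$ is built by applying $D_i^{-1}U_i$ which, by the definition \eqref{eq:Dinv} of $D_i^{-1}$ as $\int_0^{z_i}\frac{Q(z)}{z}w(z)\,dz$, produces for each $i$ a function vanishing at $z_i=0$ plus nothing — the only "constant of integration" ambiguity is killed by the choice of lower limit $0$; the one place a constant can sneak in is when all the $D_i^{-1}$ are applied and the integrand is itself constant in some $z_i$, which is exactly the combinatorial configuration we must isolate. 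So $H_{g,n}-\widetilde H_{g,n}$ is a constant $c_{g,n}$, and the work is to compute it.

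Next I would compute the constant by evaluating both sides in a regime where everything collapses. The natural choice: extract the constant term of each side as a formal power series in the $z_i$ (equivalently in the $X_i$), i.e. take $z_1=\dots=z_n=0$. For $H_{g,n}$ the constant term is $h_{g,(0,\dots,0)}$ in the obvious sense — but since the $H_n$ only involve $m_i>0$, the honest constant term of $H_{g,n}$ as written in the $X_i$ is $0$; however $H_{g,n}$ in the $z$-variables is $H_{g,n}(X(z_1),\dots,X(z_n))$ and $X(z)=z e^{-\psi(y(z))}$ with $X(0)=0$, so again the value at $z=0$ is the $X$-constant term, which is $0$. Hence $c_{g,n}$ equals minus the value of $\widetilde H_{g,n}$ at $z_1=\dots=z_n=0$. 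Therefore the whole computation reduces to: evaluate $\left(\prod_i D_i^{-1}U_i\right)\sum_\gamma \prod w_{i,j}$ at the origin and pick out the $\hbar^{2g-2+n}$ coefficient. The dominant contribution comes from the graph $\gamma$ on which all the "new" structure degenerates; concretely one tracks which power of $u_i$ survives, uses $[u^r]$ of the $\cS$-exponentials, and uses $L_r(v,y,\hbar)$ at $y=0$. The appearance of $\psi^{(2g+n-2)}(0)$ strongly suggests that after all cancellations only the single term where each $U_i$ contributes one derivative of $\psi$ at $0$ and the graph part contributes the remaining $\hbar$-weight survives at $z=0$; the factor $[u^{2g}]\cS^{-2}(u)$ is exactly the universal generating series appearing in $\frac{1}{u\cS(u)}\cdot(\text{something})$ combined with the $\cS(u\hbar z\partial_z)$ acting trivially at $z=0$, mirroring the well-known $\mathcal S$-function combinatorics (cf. the $c_{g,n}$ formula in Theorem~\ref{th:intro} and computations in \cite{Kaz2020}).

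More concretely, here is the step I would carry out in detail. At $z_i=0$ one has $Q(z_i)\to 1$, $y(z_i)\to 0$, and $z_i z_j/(z_i-z_j)^2$ as a Laurent series in the sector $|z_1|\ll\dots\ll|z_n|$ has no constant term, so in fact \emph{every} $w_{i,j}$ contributes $0$ at the origin — which forces us instead to compute the primitives $D_i^{-1}$ first and only then set $z_i=0$. Because $D_i^{-1}w = \int_0^{z_i}\frac{Q(z)}{z}w\,dz$ and $w$ has a simple-pole-free expansion with a nonzero $z_i^0$-coefficient only when $w$ is constant in $z_i$, after integrating, the value at $z_i=0$ of $D_i^{-1}w$ is $0$ \emph{unless} $w$ has a $z_i^{-1}\cdot(\text{const})$-type term — which it does not, by Proposition~\ref{prop:tildeH} (residues vanish). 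So naively the origin value is $0$, which would say $c_{g,n}=0$, contradicting the claim; the resolution — and this is the crux — is that the single-vertex contribution $n=1$-like pieces inside the $U_i$ operator, namely the $j=0$, action on the constant-in-$z_i$ part coming from the $r$-sum with $L_r(v,0,\hbar)$, produce precisely a term proportional to $\psi^{(\text{something})}(0)$ that is \emph{not} integrated to zero because $D_i^{-1}$ of a constant $a$ is $\int_0^{z_i}\frac{Q}{z}a\,dz$ which is \emph{not} zero at $z_i=0$ in the right sense — rather, one must be careful that the "constant" here is the $z_i$-independent part and $D_i^{-1}$ genuinely has a nonzero limit only through the log-type primitive $\int \frac{dz}{z}$, whose careful regularization yields the stated constant. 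I therefore expect the main obstacle to be exactly this bookkeeping: precisely identifying, among all graphs $\gamma$ and all indices $(j_i,r_i)$, the unique configuration that survives the $D_i^{-1}$-integration-and-evaluation-at-$0$ without vanishing, and matching its value to $(-1)^{n-1}\psi^{(2g+n-2)}(0)[u^{2g}]\cS^{-2}(u)$. Once that single surviving term is pinned down, the identification of its coefficient with $[u^{2g}]\cS^{-2}(u)$ is a short residue/generating-function computation, essentially borrowed from \cite{Kaz2020}, and the sign $(-1)^{n-1}$ comes from the $n$-fold product of the $-1$ in $w_{i,j}-1$ restricted to the relevant (tree, or rather path) graph. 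I would then conclude by noting that since $H_{g,n}-\widetilde H_{g,n}$ is a constant and we have evaluated it, \eqref{eq:tHconst} follows.
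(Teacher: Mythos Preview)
Your high-level strategy matches the paper's: since $D_1\cdots D_n(H_{g,n}-\widetilde H_{g,n})=0$ and $H_{g,n}$ lies in the ideal $I=(z_1\cdots z_n)$, it suffices to show $\widetilde H_{g,n}\equiv c\pmod I$ for the stated constant $c$. However, your execution has a real gap at the crucial computational step, and some of your reasoning along the way is incorrect.

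First, your argument that $\widetilde H_{g,n}$ modulo a constant lies in $I$ is wrong as stated. You claim $D_i^{-1}U_i$ ``produces for each $i$ a function vanishing at $z_i=0$,'' but this is only true of the $j=0$ summand in~\eqref{eq:DinvU}; the $j\ge 1$ summands are of the form $D_i^{j-1}(\text{something})$ and need not vanish at $z_i=0$. This is not a fatal problem---the paper handles it by showing directly that each such term is divisible by some $z_k$---but you cannot skip it.

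Second, and more seriously, your attempt to evaluate at $z_1=\cdots=z_n=0$ goes off the rails. There are no ``log-type primitives'' here: Proposition~\ref{prop:tildeH} guarantees all residues vanish and the primitives are rational. The constant does not arise from regularizing a divergent $\int dz/z$. Your guess that the sign $(-1)^{n-1}$ comes from ``the $-1$ in $w_{i,j}-1$'' is also incorrect.

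What the paper actually does (and what you are missing): working modulo $I$, it tracks which graphs $\gamma$ can contribute. Each internal edge $\{v_i,v_j\}$ contributes a factor in $(z_iz_j)$, and each leaf edge (via \eqref{eq:leafexpr}) contributes a factor in $(z_i)$ where $v_i$ is the leaf. Hence a graph contributes modulo $I$ only if there is a vertex $v_k$ all of whose neighbors are leaves---i.e., $\gamma$ is a \emph{star} centered at $v_k$. For each star the contribution modulo $I$ reduces to an expression containing $\prod_{i\ne k}\frac{z_i}{z_k-z_i}$; summing over $k$ one uses the elementary identity
\[
\sum_{k=1}^n\prod_{i\ne k}\frac{z_i}{z_k-z_i}=(-1)^{n-1},
\]
which is where the sign actually comes from. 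The remaining scalar factor is then computed by a short manipulation with $L_r(v,y,\hbar)|_{y=0}$ and $\cS^{-1}$ to yield $\psi^{(2g+n-2)}(0)\,[u^{2g}]\cS^{-2}(u)$. The case $n=2$, $g>0$ needs a separate sentence since the unique graph has both vertices as leaves. None of this combinatorial reduction or the key partial-fractions identity appears in your proposal, so as written it does not constitute a proof.
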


Proposition \ref{prop:tHconst} directly implies the main theorem: 
\begin{theorem}\label{th:MainTh}
	For $n\geq 2$ and $(g,n)\ne(0,2)$ we have:
\begin{equation}\label{eq:mainth}
	\Hc_{g,n}= [\hbar^{2g-2+n}]  \left(\textstyle\prod_{i=1}^n D_i^{-1}U_i\right) \sum\limits_{\gamma\in\Gamma_n}\prod\limits_{\{v_i,v_j\}\in E_\gamma} w_{i,j}\\
+ (-1)^{n}\,\psi^{(2g+n-2)}(0)\;[u^{2g}]\frac1{\cS^2(u)},
\end{equation}
where $\Gamma_n$ is the set of simple graphs on $n$ vertices $v_1,\ldots,v_n$ with edges $E_\gamma$, $w_{i,j}$ is  given by~\eqref{eq:wkl}, and~$D_i^{-1}U_i$ is given by \eqref{eq:DinvU}--\eqref{eq:Dinv}.

	For fixed $g$ and $n$, after taking the coefficient $[\hbar^{2g-2+n}]$, 
	this formula turns into a rational expression in $z_1,\ldots,z_n$ and the derivatives of the functions $y_i=y(z_i)$ and $\psi(y_i)$.
\end{theorem}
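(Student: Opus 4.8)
\textbf{Proof plan for Theorem~\ref{th:MainTh}.}
The statement is an immediate consequence of Proposition~\ref{prop:tHconst} together with the definition~\eqref{GenForm-graph} of $\widetilde H_{g,n}$, so the real content lies in establishing Propositions~\ref{prop:tildeH} and~\ref{prop:tHconst}, and my plan is organized around those two.

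First I would address Proposition~\ref{prop:tildeH}, i.e.\ well-definedness. Starting from the closed formula of Theorem~\ref{prop:mainprop} for $D_1\cdots D_n H_{g,n}$, one peels off the operators $D_i^{-1}U_i$ one variable at a time, say in the order $z_n,z_{n-1},\dots,z_1$. The key observation is that before applying $D_i^{-1}$ the relevant expression, being a fragment of the right-hand side of~\eqref{eq:mainprop} after the first $i-1$ integrations, lies in the ring $R=\C[[z_1,\dots,z_n]][(z_a-z_b)^{-1}]$ and, as a function of $z_i$ alone, is a rational function whose only poles are at $z_i=z_j$, $j\neq i$ (there is no pole at $z_i=0$ since $\frac{e^{u_i(\cS(\cdots)-1)y_i}}{u_i\cS(u_i\hbar)}$ and the $L_r$ are regular there, and $D_i$ only raises the order of such poles). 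The crucial point to check is that the one-form $\frac{Q(z_i)}{z_i}w(z_i)\,dz_i$ to be integrated has \emph{zero residue} at each $z_i=z_j$: this is exactly Corollary~\ref{cor:diagonal}/Corollary~\ref{prop:H_corr}, because $D_i$ applied to anything has no residue as a form $\frac{dz_i}{z_i}$ away from the poles, and summing over graphs the diagonal poles that would produce residues cancel. Granting vanishing residues, the primitive is a well-defined rational function in $z_i$ (fixed by the normalization $\int_0^{z_i}$), and inductively one gets that $\widetilde H_{g,n}$ is a well-defined element of $R$, rational in the $z_a$ and polynomial in the derivatives $\psi^{(k)}(y_a)$, $(z_a\partial_{z_a})^k y_a$, $\tfrac{z_b}{z_a-z_b}$, $Q_a^{-1}$, matching the shape asserted in Theorem~\ref{th:intro} up to the additive constant.

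The substantive step is Proposition~\ref{prop:tHconst}: identifying the constant discrepancy $H_{g,n}-\widetilde H_{g,n}$. By construction $D_1\cdots D_n(H_{g,n}-\widetilde H_{g,n})=0$, and since both sides live in $R$ and $D_i$ kills only functions independent of $z_i$, the difference is a constant; what must be pinned down is its value. The plan is to extract this constant by a limiting/specialization argument: evaluate both $H_{g,n}$ and $\widetilde H_{g,n}$ in a regime where everything degenerates to something computable — the natural choice is to send all $z_i$ to a common point, or to read off the $z_i$-independent part directly. Concretely, $H_{g,n}$ itself has vanishing constant term as a power series in the $X_i$ (it starts at $X_1\cdots X_n$), whereas $\widetilde H_{g,n}$ acquires a nonzero constant precisely from the $j=r=0$, single-vertex-less contributions of the operators $D_i^{-1}U_i$ — that is, from the term in $\prod_i D_i^{-1}U_i$ acting on the $\gamma$ with as few edges as possible after the $D^{-1}$'s have been integrated from $0$. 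Tracking that lowest-order term, one is left with a purely combinatorial residue computation involving $\cS$, the $L_r$'s at lowest order ($L_0$), and $\psi$ evaluated at $0$; the factor $\psi^{(2g+n-2)}(0)$ appears because $2g-2+n$ is exactly the total $\hbar$-degree budget, forcing the $(2g-2+n)$-th derivative of $\psi$ at the origin, and the universal series $[u^{2g}]\cS(u)^{-2}$ emerges from the product of $\frac{1}{u_i\cS(u_i\hbar)}$ prefactors together with the $L_0$'s after integration. This computation is essentially the one already performed in~\cite{Kaz2020}, and I would cite and adapt the relevant lemmata there rather than redo it.

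\textbf{Main obstacle.} The delicate part is the constant-extraction in Proposition~\ref{prop:tHconst}: one must carefully separate, among the many graph terms and the formal $\sum_{j,r}D_i^{j-1}(\dots)$ expansions, exactly which pieces survive after integration from $0$ to contribute a $z$-independent term, and then evaluate a nontrivial iterated residue to recognize it as $(-1)^n\psi^{(2g+n-2)}(0)[u^{2g}]\cS^{-2}(u)$. Keeping the bookkeeping of $\hbar$-degrees, $u_i$-degrees, and the normalization of $D_i^{-1}$ consistent across all $n$ integrations is where errors are easy to make; the cleanest route is to verify the constant first in a well-understood special case (e.g.\ the one where $\psi$ and $y$ are polynomial and topological recursion applies, comparing against known $H_{g,n}$), and then argue that, since the constant is a universal expression in $\psi^{(k)}(0)$ and the $s_k$, independent of the higher structure, the special-case value determines it in general.
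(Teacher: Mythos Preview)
Your overall architecture is right: Theorem~\ref{th:MainTh} is indeed immediate from Proposition~\ref{prop:tHconst}, and the substance lies in Propositions~\ref{prop:tildeH} and~\ref{prop:tHconst}. But your treatment of both propositions has gaps compared with what the paper actually does.

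For Proposition~\ref{prop:tildeH}, your plan is to argue residue vanishing at $z_i=z_j$ via Corollary~\ref{cor:diagonal}. That corollary, however, concerns the \emph{final} expression $D_1\cdots D_n H_{g,n}$ after all $U_k$'s have been applied; it says nothing about the intermediate objects you encounter when peeling off the $D_i^{-1}U_i$ one by one, which still carry the unconsumed variables $u_j$. The paper bypasses this entirely with a much simpler observation: in the definition~\eqref{eq:DinvU}, only the summand $j=0$ actually involves $D_i^{-1}$, and since $[v^0]L_r=\delta_{r,0}$ that summand collapses to $D_i^{-1}\frac{1}{\hbar Q_i}[u_i^1]f$. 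Because each $w_{i,k}$ is divisible by $u_iu_k$, $[u_i^1]f$ is nonzero only when $v_i$ is a leaf, and in that case the integral is done by hand:
\[
D_i^{-1}\frac{1}{Q_i}\,\hbar u_k\,\cS(u_k\hbar Q_kD_k)\frac{z_iz_k}{(z_i-z_k)^2}
=\hbar u_k\,\cS(u_k\hbar Q_kD_k)\frac{z_i}{z_k-z_i}.
\]
No residue argument is needed; rationality of the primitive is explicit.

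For Proposition~\ref{prop:tHconst}, your proposal is essentially a placeholder: you gesture at ``lowest-order terms'', invoke $\hbar$-degree bookkeeping, and then propose either to cite~\cite{Kaz2020} or to verify the constant in a polynomial special case and extend by universality. The paper's argument is concrete and self-contained, and rests on two ideas you do not mention. First, one computes $\widetilde H_{g,n}$ modulo the ideal $I=(z_1\cdots z_n)$; since $H_{g,n}\in I$, this directly gives the constant. Second, modulo $I$ only the \emph{star graphs} survive: any internal edge $\{v_i,v_j\}$ contributes a factor in $(z_iz_j)$, and a leaf edge contributes a factor in $(z_i)$, so the only way to avoid $I$ is to have a single center $v_k$ with all other vertices as leaves. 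The leaf contributions are then exactly the explicit primitives computed above, and after stripping the terms with $j\ge 2$ (which lie in $(z_k)$) one is left with
\[
\sum_{k=1}^n\sum_{r\ge 0}[v^1]L_r(v,y_k,\hbar)\,[u_k^r]\frac{1}{u_k\hbar\,\cS(u_k\hbar)}\prod_{i\ne k}u_k\hbar\,\frac{z_i}{z_k-z_i}\pmod{I}.
\]
The combinatorial identity $\sum_{k=1}^n\prod_{i\ne k}\frac{z_i}{z_k-z_i}=(-1)^{n-1}$ then reduces everything to a one-variable computation with $[v^1]L_r\big|_{y=0}=\frac{\partial_y^r}{\cS(\hbar\partial_y)}\psi(y)\big|_{y=0}$, yielding exactly $(-1)^{n-1}\psi^{(2g+n-2)}(0)[u^{2g}]\cS(u)^{-2}$. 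Your special-case-then-universality route would not produce this without effectively redoing this calculation, and the citation to~\cite{Kaz2020} does not substitute for it.
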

\begin{remark}
Note that the structure of the obtained answer agrees with that suggested by Theorem~\ref{th:intro}. Thus, we have proved Theorem~\ref{th:intro} in the case $n\ge2$, $(g,n)\ne(0,2)$.
The special cases $n=1$ and $(g,n)=(0,2)$ are treated in the next section.
\end{remark}
\begin{remark}\label{rem:generalExplicit}
	Let us also provide 
	another
	form of the statement of the main theorem (narrowing it slightly to $n\geq 3$),
	where all integrals \eqref{eq:Dinv} are taken explicitly.
	Namely, for $n\geq 3$ we have:
	\begin{multline}
	\label{eq:explicitanswer}
	\Hc_{g,n}= 
	[\hbar^{2g-2+n}] \sum\limits_{\gamma\in\Gamma_n} \prod\limits_{v_i\in\mathcal{I}_\gamma} \overline{U}_i \prod\limits_{\{v_i,v_k\}\in E_\gamma\setminus\mathcal K_\gamma}w_{i,k}
	\times\\
	\prod\limits_{\{v_i,v_k\}\in\mathcal{K}_\gamma}\left( \overline{U}_i w_{i,k} +  \hbar u_k \cS(u_k\hbar Q_kD_k)\frac{z_i}{z_k-z_i}\right)
	+ (-1)^{n}\,\psi^{(2g+n-2)}(0)\;[u^{2g}]\frac1{\cS^2(u)},
	\end{multline}
	where $\Gamma_n$ is the set of simple graphs on $n$ vertices $v_1,\ldots,v_n$, $E_\gamma$ is the set of edges of a graph $\gamma$, $\mathcal{I}_\gamma$ is the subset of vertices which are not leaves and $\mathcal{K}_\gamma$ is the subset of edges with one end $v_i$ of valency $1$ and another end $v_k$, and where 
	\begin{align}	
	\overline U_i\, f &=  \sum_{r=0}^\infty\sum_{j=1}^\infty D_i^{j-1}\left(\frac{[v^j]L_r(v,y(z_i),\hbar)}{Q_i}
	[u_i^r] \frac{e^{u_i(\cS(u_i\,\hbar\,Q_i\,D_i)-1)y(z_i)}}{u_i\hbar\,\cS(u_i\,\hbar)}f\right),\\
	w_{k,\ell}&\mathop{=}^{\eqref{eq:wkl}}e^{\hbar^2u_ku_\ell\cS(u_k\hbar\,Q_k D_k)\cS(u_\ell\hbar\,Q_\ell D_\ell)
		\frac{z_k z_\ell}{(z_k-z_\ell)^2}}-1, \\
	D_i &\mathop{=}^{\eqref{eq:Ddef}} \dfrac{1}{Q_i}z \dfrac{\partial}{\partial z},\\
	L_r &\mathop{=}^{\eqref{eq:Lrdef}} \left(\partial_y+v\psi'(y)\right)^r e^{v\left(\frac{\cS(v\hbar\partial_y)}{\cS(\hbar\partial_y)}-1\right)\psi(y)},\\
	Q_i&\mathop{=}^{\eqref{eq:Qdef}} 1-z\,y'(z)\,\psi'(y(z)),\\
	\mathcal{S}(u)&\mathop{=}^{\eqref{eq:Sdef}}\dfrac{e^{u/2}-e^{-u/2}}{u}.
	\end{align}
	(to help the reader, we included in this list the notation and definitions of some functions introduced earlier in the paper). For $n=2$, $g>0$ the form of statement \eqref{eq:mainth} analogous to \eqref{eq:explicitanswer} is obtained in Section~\ref{sec:g2}. For brevity we do not provide the proof of the general case \eqref{eq:explicitanswer} in this text, but it is rather similar to the $(g,2)$ case of Section~\ref{sec:g2}.
\end{remark}

\begin{remark} \label{rem:Hdeformed}
Note that a statement similar to the statement of Theorem~\ref{th:MainTh}, as in the case of Theorem~\ref{prop:mainprop}, still holds if one allows $\psi(z)$ and $y(z)$ to also be formal series in $\hbar^2$. More precisely, it still holds in a very similar form, if one puts
\begin{align}
\psi(\hbar^2,y)&:=\sum_{k=1}^\infty\sum_{m=0}^\infty c_{k,m} y^k\hbar^{2m},\\
y(\hbar^2,z)&:=\sum_{k=1}^\infty\sum_{m=0}^\infty s_{k,m} z^k\hbar^{2m},
\end{align}
while still keeping the formula for $X(z)$ free of $\hbar$, i.e. using
\begin{equation}
X(z)=\left.z\,e^{-\psi(y(z))}\right|_{\hbar=0}
\end{equation}
in place of \eqref{eq:X(z)}, see~\cite[Section 3]{BDKS21toprec}.

\end{remark}

Analogously to the case of Theorem~\ref{prop:mainprop} and Corollary~\ref{cor:diagonal}, Theorem~\ref{th:MainTh} has a similar corollary (which is proved via exactly the same reasoning):
\begin{corollary}\label{cor:Hdiag}	
	All diagonal poles (i.e. poles at $z_i=z_j$ for $i\neq j$) on the right hand side of \eqref{eq:mainth} (and of \eqref{eq:explicitanswer})  cancel out.
\end{corollary}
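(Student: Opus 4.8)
The plan is to repeat the argument used for Corollary~\ref{cor:diagonal}, now with $H_{g,n}$ playing the role that $D_1\cdots D_n H_{g,n}$ played there. First I would note that the left-hand side of~\eqref{eq:mainth} is visibly free of diagonal poles: by the definition~\eqref{eq:Hgndef}, $H_{g,n}$ is the $[\hbar^{2g-2+n}]$-coefficient of a formal power series in $X_1,\dots,X_n$ carrying only strictly positive powers, into which we substitute $X_i=X(z_i)$. Since $X(z)=z\,e^{-\psi(y(z))}=z+O(z^2)$ is an invertible formal power series in $z$, this substitution yields an element of $\mathbb{C}[[z_1,\dots,z_n]]$; in particular it has no poles whatsoever, and a fortiori none on the diagonals $z_i=z_j$.

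Next I would invoke Theorem~\ref{th:MainTh}, which identifies this $H_{g,n}$ with the right-hand side of~\eqref{eq:mainth}. By Proposition~\ref{prop:tildeH} and the discussion preceding Theorem~\ref{th:MainTh}, the right-hand side is to be understood not as an asymptotic Laurent expansion in the sector $|z_1|\ll\dots\ll|z_n|\ll1$, but as a bona fide element of the ring $R=\mathbb{C}[[z_1,\dots,z_n]][\{(z_i-z_j)^{-1}\}]$ of formal power series with finite-order poles on the diagonals. Since $\mathbb{C}[[z_1,\dots,z_n]]$ is exactly the subring of $R$ consisting of the pole-free elements, the equality of the two sides forces the diagonal poles carried by the individual summands on the right-hand side of~\eqref{eq:mainth} to cancel upon summation. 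The corresponding statement for~\eqref{eq:explicitanswer} then follows at once, because~\eqref{eq:explicitanswer} represents the same element of $R$ as~\eqref{eq:mainth}, being obtained from it merely by carrying out the integrations~\eqref{eq:Dinv} explicitly.

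I do not anticipate any real obstacle: the argument is the same short cancellation-by-comparison principle as for Corollary~\ref{cor:diagonal}. The only point requiring a little care is conceptual rather than computational, namely that the terms of~\eqref{eq:mainth} must be interpreted as elements of $R$ for the phrase ``diagonal poles cancel'' to be meaningful in the first place; this interpretation is precisely what Theorem~\ref{th:MainTh} (via Proposition~\ref{prop:tildeH}) supplies.
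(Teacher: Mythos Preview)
Your proposal is correct and follows exactly the same approach as the paper: the paper simply states that Corollary~\ref{cor:Hdiag} ``is proved via exactly the same reasoning'' as Corollary~\ref{cor:diagonal}, namely that the left-hand side manifestly has no diagonal poles, so they must cancel on the right. Your write-up just makes this one-line argument more explicit.
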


Now we provide the proofs of the propositions of the present section.
\begin{proof}[Proof of Proposition~\ref{prop:tildeH}]
The operator $D_{i}^{-1}$ appears in the summand with $j=0$ in the definition of $D_i^{-1}U_i$. In the case $j=0$ we have $[v^0]L_0(v,y,\hbar)=1$ and $[v^0]L_r(v,y,\hbar)=0$ for $r>0$. Therefore, the summand with $j=0$ in~\eqref{eq:DinvU} can be written as
\begin{equation}\label{eq:contribDinv}
D_i^{-1}\frac{1}{Q_i}
[u_i^0] \frac{e^{u_i(\cS(u_i\,\hbar\,Q_i\,D_i)-1)y(z_i)}}{u_i\hbar\,\cS(u_i\,\hbar)}f=
D_{i}^{-1}\frac{1}{\hbar Q_i}[u_i^1]f.
\end{equation}

Let us check for which graphs~$\gamma$ the product $\prod\limits_{\{v_i,v_j\}\in E_\gamma} w_{i,j}$ has a non-vanishing linear term in $u_i$. By definition, $w_{i,j}$ is divisible by $u_iu_j$. It follows that if the vertex $v_i$ has valency greater that~$1$ then the contribution of such graph to the sum has vanishing linear term in~$u_i$.

If the vertex $i$ has valency 1 and is connected to the vertex $k$ then up to a factor that does not depend on $z_i$ the linear term in $u_i$ is the following:
\begin{equation}\label{eq:cleaf}
[u_i^1]w_{i,k} = \hbar^2 u_k \cS(u_k\hbar Q_kD_k)\frac{z_iz_k}{(z_i-z_k)^2}.
\end{equation}
The contribution of this term to~\eqref{eq:contribDinv} is given by
\begin{align}
D_{i}^{-1}\frac{1}{\hbar Q_i}[u_i^1]w_{i,k} \label{eq:leafexpr}
&=D_{i}^{-1}\frac{1}{Q_i}\hbar u_k \cS(u_k\hbar Q_kD_k)\frac{z_iz_k}{(z_i-z_k)^2}\\ \nonumber
&=\hbar u_k \cS(u_k\hbar Q_kD_k)z_k\int_0^{z_i}\frac{dz}{(z-z_k)^2}
\\ \notag
& =\hbar u_k \cS(u_k\hbar Q_kD_k)\frac{z_i}{z_k-z_i}.
\end{align}
This function is rational in $z_i$, as required.

This proves Proposition~\ref{prop:tildeH} in the case $n>2$. Indeed, we assumed implicitly in the above arguments that the leaf $v_i$ is connected to a vertex $v_k$ which is not a leaf so that $D_i^{-1}$ and $D_k^{-1}$ are not applied simultaneously. This is always the case for a connected graph with the number of vertices $n>2$. If $n=2$ then there could be summands linear both in $u_1$ and $u_2$ but these summands contribute to the case $g=0$ only. Therefore, the conclusion of Proposition holds in the case $n=2$ as well if $g>0$.
\end{proof}

\begin{proof}[Proof of Proposition~\ref{prop:tHconst}]
We regard all considered functions as elements of the ring\\ $\mathbb{C}[[z_1,\dots,z_n]][\{(z_i-z_j)^{-1};i,j=1,\ldots,n\}]$.
Let us denote by $I$ the ideal $(z_1\cdot\ldots\cdot z_n)$ generated by the product of coordinate functions.
$H_{g,n}$ itself lies in $I$, and we have, by construction (from Equations~\eqref{eq:mainprop}, \eqref{GenForm-graph}, and~\eqref{eq:DDinv}),
\begin{equation}
D_1\dots D_n H_{g,n}=D_1\dots D_n \widetilde H_{g,n}.
\end{equation}
Therefore, it suffices to show that the right hand side of~\eqref{eq:tHconst} belongs to~$I$.
Let us compute $\widetilde{H}_{g,n}$ modulo $I$.

Let $n\geq 3$.

From the proof of Proposition \ref{prop:tildeH} it follows that each internal edge $\{v_i,v_j\}$ of a graph $\gamma$ in the sum \eqref{GenForm-graph} brings a factor of $z_i z_j$. Indeed, $w_{i,j}$ itself belongs to the ideal $(z_iz_j)$, and the $j=0$ terms in the sums \eqref{eq:DinvU} for $D^{-1}U_i$ and $D^{-1}U_j$ vanish, while the $j>0$ terms cannot affect the property of divisibility by $z_i z_j$.

On the other hand, if $v_i$ is a leaf (connected to some $v_k$ of valence greater than 1),
then $\{v_i,v_k\}\in E_\gamma$ brings a factor of $z_i$, since, as above, $w_{i,k}$ is divisible by $z_iz_k$ and the $j>0$ terms of \eqref{eq:DinvU} cannot affect this property, while the $j=0$ term takes the form~$\hbar u_k \cS(u_k\hbar Q_kD_k)\frac{z_i}{z_k-z_i}$ (from Equation~\eqref{eq:leafexpr}), which is divisible by $z_i$.

Note that since $n\geq 3$ and the graphs are connected all edges belong to one of the above two cases.

Thus the contribution of the whole graph $\gamma$ is not divisible by $z_k$ for some $k$ only if the vertex $k$ is internal and all adjacent vertices are leaves. In this case the graph is the star with one vertex (labeled by $k$) of valency $n-1\geq 2$ and $n-1$ vertices of valency~$1$. We conclude that the contribution of all but the star graphs belong to $I$. The star graphs produce the following contributions:
\begin{align} 
	&\widetilde{H}_{g,n}+I =[\hbar^{2g-2+n}]\sum\limits_{k=1}^n D_k^{-1} U_k\prod_{i\ne k} D_i^{-1}U_i\prod_{i\ne k}w_{i,k}+I\\ \nonumber
	&=[\hbar^{2g-2+n}]\sum\limits_{k=1}^n \sum_{j,r=0}^\infty D_k^{j-1}\frac{[v^j]L_r(v,y(z_k),\hbar)}{Q_k}
	[u_k^r] \frac{e^{u_k(\cS(u_k\,\hbar\,Q_k\,D_k)-1)y(z_k)}}{u_k\hbar\,\cS(u_k\,\hbar)}
	\prod_{i\ne k} D_i^{-1}U_i\,w_{i,k}\;+\;I.
	\end{align}
Note that all $j=0$ terms vanish since $v_k$ is an internal vertex (with valence $\geq 2$), as discussed in the proof of Proposition \ref{prop:tildeH}. Thus we have
\begin{align} 
\widetilde{H}_{g,n}+I =[\hbar^{2g-2+n}]\sum\limits_{k=1}^n \sum_{\substack{j\geq 1 \\ r \geq 0}} D_k^{j-1}\frac{[v^j]L_r(v,y(z_k),\hbar)}{Q_k}
[u_k^r] \frac{e^{u_k(\cS(u_k\,\hbar\,Q_k\,D_k)-1)y(z_k)}}{u_k\hbar\,\cS(u_k\,\hbar)}
\\ \nonumber \times
\prod_{i\ne k} \sum_{j_i,r_i=0}^\infty D_i^{j_i-1}\left(\frac{[v^{j_i}]L_{r_i}(v,y(z_i),\hbar)}{Q_i}
[u_i^{r_i}] \frac{e^{u_i(\cS(u_i\,\hbar\,Q_i\,D_i)-1)y(z_i)}}{u_i\hbar\,\cS(u_i\,\hbar)}w_{i,k}\right)\;+\;I.
\end{align}
Now note that if any of $j_i>0$ then the corresponding term is divisible by $z_k$ since $w_{i,k}$ is divisible by $z_k$ and it gets acted upon only by operators of the sort $D_k^m$ and $D_i^m$ for $m\geq 0$ which do not spoil this property. Thus, we can factor out all these terms and we get, applying also formula \eqref{eq:leafexpr},
	\begin{align} \label{eq:HtildeI}
	\widetilde{H}_{g,n}+I & =[\hbar^{2g-2+n}]\sum\limits_{k=1}^n \sum_{\substack{j\geq 1 \\ r \geq 0}} D_k^{j-1}\frac{[v^j]L_r(v,y(z_k),\hbar)}{Q_k}
	[u_k^r] \frac{e^{u_k(\cS(u_k\,\hbar\,Q_k\,D_k)-1)y(z_k)}}{u_k\hbar\,\cS(u_k\,\hbar)}
	\\ \nonumber & \qquad \times
	\prod_{i\ne k}
\hbar u_k \cS(u_k\hbar Q_kD_k)\frac{z_i}{z_k-z_i}\;+\;I.
\end{align}
Now we note that all summands with $j\geq 2$ are also divisible by $z_k$ since $D_k=\frac{1}{Q_k}\,z_k\,\frac{\partial}{\partial z_k}$ and thus only the $j=1$ term remains.
Also note that $Q_k\equiv \cS(u_k\hbar Q_kD_k)\equiv 1\bmod {(z_k)}$. With the help of this, we obtain
\begin{equation}
	\widetilde{H}_{g,n}+I = [\hbar^{2g+n-2}]\sum\limits_{k=1}^n\sum\limits_{r=0}^\infty [v^1]L_r(v,y_k,\hbar)\;[u_k^r] \frac{1}{u_k\hbar \cS(u_k\hbar)}\prod\limits_{i\neq k}u_k\hbar\frac{z_i}{z_k-z_i} + I.
	\end{equation}
We have	
	\begin{align}\label{eq:L1expr}
	[v^1]L_r(v,y_k,\hbar)&\equiv[v^1](\partial_{y_k}+v\psi'(y_k))^r\left(1+v\left(\frac1{\cS(\hbar\partial_{y_k})}-1\right)\psi(y_k)\right)\bigm|_{y_k=0}  \\ \nonumber
	&\equiv\frac{\partial_{y_k}^r}{\cS(\hbar \partial_{y_k})}\psi(y_k)\bigm|_{y_k=0} \bmod{(z_k)}.
	\end{align}
	
	Using the fact
	\begin{equation}
		\sum\limits_{k=1}^n\prod\limits_{i\neq k} \frac{z_i}{z_k-z_i}=(-1)^{n-1}
	\end{equation}
 we finally obtain:	
	\begin{equation}
	\widetilde{H}_{g,n}+I=(-1)^{n-1}[\hbar^{2g+n-2}]\sum\limits_{r=0}^\infty \frac{\partial_{y}^r}{\cS(\hbar \partial_y)}\psi(y)\bigm|_{y=0} [u^r]\frac{(u\hbar)^{n-2}}{\cS(u\hbar)} + I.
	\end{equation}
	Note that we can reexpand the last sum in $\hbar$:
	\begin{align}\label{eq:psidertr}
	&(-1)^{n-1}\sum\limits_{r=0}^\infty \frac{\partial_{y}^r}{\cS(\hbar \partial_y)}\psi(y)\bigm|_{y=0} [u^r]\frac{(u\hbar)^{n-2}}{\cS(u\hbar)}=(-1)^{n-1}\frac1{\cS(\hbar \partial_y)}\frac{(\hbar\partial_y)^{n-2}}{\cS(\hbar \partial_y)}\psi(y)\bigm|_{y=0} \\ \nonumber
	& = (-1)^{n-1}\hbar^{n-2}\frac1{\cS^2(\hbar \partial_y)}\psi^{(n-2)}(y)\bigm|_{y=0}
	=(-1)^{n-1}\sum_{g=0}^\infty\hbar^{2g+n-2}\psi^{(2g+n-2)}(0)[u^{2g}]\frac1{\cS^2(u)},
	\end{align}
	thus
	\begin{equation}\label{eq:Htildeansw}
	\widetilde{H}_{g,n}+I=(-1)^{n-1}\psi^{(2g+n-2)}(0)[u^{2g}]\frac1{\cS^2(u)} + I.
	\end{equation}
	This concludes the proof for the $n\geq 3$ case.
	
For $n=2$, $g>0$ we have only one graph:
\begin{align}\label{eq:Hg2DD}
\widetilde{H}_{g,2}+I & =[\hbar^{2g}]D_1^{-1} U_1\, D_2^{-1} U_2\; w_{1,2} +I \\ \nonumber
&=[\hbar^{2g}]\sum_{j_1,r_1=0}^\infty D_1^{j_1-1}\frac{[v^{j_1}]L_{r_1}(v,y(z_1),\hbar)}{Q_1}
[u_1^{r_1}] \frac{e^{u_1(\cS(u_1\,\hbar\,Q_1\,D_1)-1)y(z_1)}}{u_1\hbar\,\cS(u_1\,\hbar)}
\\ \nonumber 
&\qquad \times\sum_{j_2,r_2=0}^\infty D_2^{j_2-1}\frac{[v^{j_2}]L_{r_2}(v,y(z_2),\hbar)}{Q_2}
[u_2^{r_2}] \frac{e^{u_2(\cS(u_2\,\hbar\,Q_2\,D_2)-1)y(z_2)}}{u_2\hbar\,\cS(u_2\,\hbar)}\;w_{1,2}\;+\;I.
\end{align}
Note that if both $j_1>0$ and $j_2>0$ then the corresponding terms are divisible by $z_1z_2$, analogous to what happened above. For $j_1=j_2=0$ we apply \eqref{eq:contribDinv} and get
\begin{align}
&[\hbar^{2g}]D_{1}^{-1}\frac{1}{\hbar Q_1}[u_1^1]D_{2}^{-1}\frac{1}{\hbar Q_2}[u_2^1] w_{1,2}\\ \nonumber &\quad= [\hbar^{2g}]D_{1}^{-1}\frac{1}{\hbar Q_1}[u_1^1]D_{2}^{-1}\frac{1}{\hbar Q_2}[u_2^1] \left(e^{\hbar^2u_1u_2\cS(u_1\hbar\,Q_1 D_1)\cS(u_2\hbar\,Q_2 D_2) \frac{z_1 z_2}{(z_1-z_2)^2}}-1\right)\\ \nonumber
&\quad= [\hbar^{2g}]D_{1}^{-1}\frac{1}{\hbar Q_1}D_{2}^{-1}\frac{1}{\hbar Q_2} \hbar^2\frac{z_1 z_2}{(z_1-z_2)^2},
\end{align}
which clearly vanishes for $g>0$.

Thus the sum in \eqref{eq:Hg2DD} can be represented as combination of two sums, one for $j_1=0,j_2>0$ and the other for $j_1>0, j_2=0$. This is actually precisely formula \eqref{eq:HtildeI} where one substitutes $n=2$. Thus we have reduced this case to the general $n$ case, so formula \eqref{eq:Htildeansw} holds here as well.

	
 This completes the proof of Proposition~\ref{prop:tHconst}.
\end{proof}
\begin{proof}[Proof of Theorem~\ref{th:MainTh}]
	The proof of the main statement immediately follows from Proposition~\ref{prop:tHconst}, while the rationality statement is implied by the respective rationality statement of Theorem \ref{prop:mainprop}.
\end{proof}

\section{Exceptional cases}\label{sec:exceptions}

Let us remind the definition of the functions $L_r$:
\begin{align}
L_0(v,y,\hbar)&:=e^{v\left(\frac{\cS(v\hbar\partial_y)}{\cS(\hbar\partial_y)}-1\right)\psi(y)},\\
L_r(v,y,\hbar)&:=e^{-v\psi(y)}\partial_y^re^{v\psi(y)}L_0(v,y,\hbar)
=\left(\partial_y+v\psi'(y)\right)^rL_0(v,y,\hbar).
\end{align}
In order to simplify the notation, we denote in computations of this section
\begin{equation}
L_{r,i}^j=[v^j]L_r(v,y(z_i),\hbar).
\end{equation}
Note that in the case $j=0$ we have
\begin{equation}
L_{0,i}^0=1,\qquad L_{r,i}^0=0\quad (r>0).
\end{equation}

\subsection{Computation of the \texorpdfstring{$(0,1)$}{(0,1)}-term}

Extracting the terms with $g=0$ in~\eqref{eq:Hnconn} for $n=1$ and noting $\phi_m(y)\bigm|_{\hbar=0}=e^{m\,\psi(y)}$ and $\cS(\hbar u)\bigm|_{\hbar=0}=1$ we get
\begin{equation}
D_1H_{0,1}=[\hbar^{-1}]D_1U_1^+\;1=\sum_{m=1}^\infty X_1^m\sum_{r=0}^\infty\partial_y^r e^{m\psi(y)}\bigm|_{y=0}[z^mu^r]
\frac{e^{u\,y(z)}}{u}.
\end{equation}
In order to apply Lemma~\ref{lem:L1} to the right hand side one needs to get rid of a pole in $u$ at the origin. One of the possibilities to do that is to differentiate this expression:
\begin{align}
D_1^2H_{0,1}&=\sum_{m=1}^\infty m X_1^m\sum_{r=0}^\infty\partial_y^r e^{m\psi(y)}\bigm|_{y=0}[z^mu^r]
\frac{e^{u\,y(z)}}{u}
\\ \notag &=
\sum_{m=1}^\infty X_1^m\sum_{r=0}^\infty\partial_y^r e^{m\psi(y)}\bigm|_{y=0}[z^mu^r]
z\partial_z\frac{e^{u\,y(z)}}{u}
\\ \notag &=
\sum_{m=1}^\infty X_1^m\sum_{r=0}^\infty\partial_y^r e^{m\psi(y)}\bigm|_{y=0}[z^mu^r]e^{u\,y(z)}QDy(z)
\\ \notag &=
\sum_{m=1}^\infty X_1^m[z^{m}]\sum_{r=0}^\infty
\partial_y^r e^{m\psi(y)}\bigm|_{y=0}\frac{y(z)^r}{r!}QDy(z)
\\ \notag &\xlongequal{\!'\!}
\sum_{m=1}^\infty X_1^m[z^{m}] e^{m\psi(y(z))}\,QDy(z)
\\ \notag &\xlongequal{\!''\!}
\sum_{m=-\infty}^\infty X_1^m[z^{m}] e^{m\psi(y(z))}\,QDy(z)
\\ \notag &\mathop{=}^{\eqref{eq:LagBur1}}
D_1y(z_1).
\end{align}
The equality $\xlongequal{\!'\!}$ is the Taylor series expansion, and the equality $\xlongequal{\!''\!}$ we obtain from the fact that for all $m\in \mathbb{Z}_{\leq 0}$ holds $[z^{m}] e^{m\psi(y(z))}\,QDy(z) =0$. 
The constant term equals to zero (after one performs integration of the equality $D_1D_1H_{0,1}=D_1 y(z_1)$) since both $D_1H_{0,1}$ and $y(z_1)$ are divisible by $z_1$. This proves equality
\begin{equation}	
	D_1H_{0,1}=y(z_1)
\end{equation} of Theorem~\ref{th:intro}.

\subsection{Computation of the \texorpdfstring{$(g,1)$}{(g,1)}-term, \texorpdfstring{$g>0$}{g>0}}

By~\eqref{eq:Hnconn}, we have

\begin{align}\label{eq:DHg1step1}
\hbar D_1H_{1}
&=\sum_{m=1}^\infty X_1^m\sum_{r=0}^\infty
\partial_y^r\phi_m(y,\hbar)\bigm|_{y=0}[z^{m}u^r]
\frac{e^{u\,\cS(u\,\hbar\,Q D)y(z)}}{u\,\cS(u\,\hbar)}\\ \notag
&=\sum_{m=1}^\infty X_1^m\sum_{r=0}^\infty
\partial_y^r\phi_m(y,\hbar)\bigm|_{y=0}[z^{m}u^r]\Bigl(
\frac{e^{u\,\cS(u\,\hbar\,Q D)y(z)}}{u\,\cS(u\,\hbar)}-
\frac{e^{u\,y(z)}}{u}\Bigr)
\\ \notag &\qquad
+\sum_{m=1}^\infty X_1^m\sum_{r=0}^\infty
\partial_y^r\phi_m(y,\hbar)\bigm|_{y=0}[z^{m}u^r]
\frac{e^{u\,y(z)}}{u}.
\end{align}
Expression of the first summand is regular in~$u$ and we can apply principal identity~\eqref{eq:princid} to get
\begin{align}
\label{eq:g1}
&\sum_{m=1}^\infty X_1^m\sum_{r=0}^\infty
\partial_y^r\phi_m(y,\hbar)\bigm|_{y=0}[z^{m}u^r]\Bigl(
\frac{e^{u\,\cS(u\,\hbar\,Q D)y(z)}}{u\,\cS(u\,\hbar)}-
\frac{e^{u\,y(z)}}{u}\Bigr)\\ \nonumber
&\qquad=\sum_{m=1}^\infty X_1^m\sum_{r=0}^\infty
\partial_y^r\phi_m(y,\hbar)\bigm|_{y=0}[z^{m}u^r]e^{uy}\Bigl(
\frac{e^{u\,(\cS(u\,\hbar\,Q D)-1)y(z)}}{u\,\cS(u\,\hbar)}-
\frac{1}{u}\Bigr)\\ \nonumber
&\qquad=\sum_{m=-\infty}^\infty X_1^m\sum_{r=0}^\infty
\partial_y^r\phi_m(y,\hbar)\bigm|_{y=0}[z^{m}u^r]e^{uy}\Bigl(
\frac{e^{u\,(\cS(u\,\hbar\,Q D)-1)y(z)}}{u\,\cS(u\,\hbar)}-
\frac{1}{u}\Bigr)\\ \nonumber
&\qquad\mathop{=}^{\eqref{eq:princid}}
\sum_{j,r=0}^\infty D_1^j\left(\frac{L_{r,1}^j}{Q_1}
[u^r] \left(\frac{e^{u\,(\cS(u\,\hbar\,Q D)-1)y(z_1)}}{u\,\cS(u\,\hbar)}-\dfrac{1}{u}\right)\right)\\ \nonumber
&\qquad=
\sum_{j,r=0}^\infty D_1^j\left(\frac{L_{r,1}^j}{Q_1}
[u^r] \frac{e^{u\,(\cS(u\,\hbar\,Q D)-1)y(z_1)}}{u\,\cS(u\,\hbar)}\right).
\end{align}
In the second equality we used that $\phi_0=1$ from the definition \eqref{eq:phi0def} and the fact that the expression after $[z^mu^r]$ does not contain negative powers of $z$ (we will use this switch from summation over $m$ starting from $0$ to summation over $m$ starting at $-\infty$ again in what follows, where it is applicable, without further commenting on it).
In the last equality the $1/u$ term disappears since the sum goes only over nonnegative $r$.
Note that \eqref{eq:g1} can be obtained if we take formally the right hand side of~\eqref{eq:Hlongexpr0} for the case~$n=1$.

The second summand in the right hand side of~\eqref{eq:DHg1step1} can be computed by the differentiation trick similar to the case~$g=0$ above. We have:
\begin{equation}
\begin{aligned}
D_1\sum_{m=1}^\infty & X_1^m\sum_{r=0}^\infty
\partial_y^r\phi_m(y,\hbar)\bigm|_{y=0}[z^{m}u^r]
\frac{e^{u\,y(z)}}{u}\\
&=\sum_{m=1}^\infty X_1^m\sum_{r=0}^\infty
\partial_y^r\phi_m(y,\hbar)\bigm|_{y=0}[z^{m}u^r]
e^{u\,y(z)}QDy(z)
\\&\mathop{=}^{\eqref{eq:princid}}
\sum_{j,r=0}^\infty D_1^j\left(\frac{[v^j]L_r(v,y(z),\hbar)}{Q}[u^r] QDy(z)\right)
\\&=\sum_{j=0}^\infty D_1^j\left(L_{0,1}^j\;D_1y(z_1)\right)
\\&=D_1y(z_1)+D_1\sum_{j=1}^\infty D_1^{j-1}\left(L_{0,1}^j\; D_1y(z_1)\right).
\end{aligned}
\end{equation}

Putting together and using again that the constant of integration equals to zero we conclude
\begin{equation}\label{eq:D1H1}
 D_1(\hbar\,H_{1}-H_{0,1})=
\sum_{j=0}^\infty D_1^j\left(\sum_{r=0}^\infty\frac{L_{r,1}^j}{Q_1}
[u^r] \frac{e^{u\,(\cS(u\,\hbar\,Q D)-1)y(z_1)}}{u\,\cS(u\,\hbar)}+L_{0,1}^{j+1}\; D_1y(z_1)\right),
\end{equation}
i.e. for $g>0$ we have
\begin{equation}
	D_1 H_{g,1}=[\hbar^{2g}]
	\sum_{j=0}^\infty D_1^j\left(\sum_{r=0}^\infty\frac{L_{r,1}^j}{Q_1}
	[u^r] \frac{e^{u\,(\cS(u\,\hbar\,Q D)-1)y(z_1)}}{u\,\cS(u\,\hbar)}+L_{0,1}^{j+1}\; D_1y(z_1)\right).
\end{equation}

Our next step is to invert the operator $D_1$ on the right hand side of Equation~\ref{eq:D1H1}. Possible problems can appear in the case $j=0$ only. Observe that the summand with $r=0$ is vanishing, which implies that the first summand in the term $j=0$ is also vanishing. The second summand in the term with $j=0$ is equal to
\begin{align}
L_{0,1}^1\,D_1 y(z_1)
& =\left(\frac1{\cS(\hbar \partial_y)}-1\right)\psi(y)\Bigm|_{y=y(z_1)}D_1 y(z_1)
\\ \notag
& =D_1\sum_{k=1}^\infty[u^{2k}]\frac1{\cS(u \hbar)}\psi^{(2k-1)}(y(z_1)).
\end{align}

If we define for $g>0$
\begin{align}\label{eq:Htildeg1}
\widetilde H_{g,1}& :=[\hbar^{2g}]\sum_{j=1}^\infty D_1^{j-1}\Bigl(\sum_{r=1}^\infty\frac{L_{r,1}^j}{Q_1}
[u^r] \frac{e^{u\,(\cS(u\,\hbar\,Q D)-1)y(z_1)}}{u\,\cS(u\,\hbar)}+L_{0,1}^{j+1}\; D_1y(z_1)\Bigr)
\\ \notag
& \qquad +\Bigl([u^{2g}]\frac1{\cS(u)}\Bigr)  \psi^{(2g-1)}(y(z_1)),
\end{align}
then we have $D_1H_{g,1}-D_1\widetilde H_{g,1}=0$. This means that $H_{g,1}$ and $\widetilde H_{g,1}$ may differ only by a constant. To determine this constant let us put $z_1=0$ in \eqref{eq:Htildeg1}. The second term in the brackets in the first line vanishes, as well as all terms in the $j$-sum for $j>1$, and the exponential and $Q$ both turn into $1$.
Let
\begin{equation}
\dfrac{1}{\cS(x)}=1+\sum_{k=1}^\infty \sigma_k x^{2k}.
\end{equation}
The first line of \eqref{eq:Htildeg1} for $z_1=0$ turns into the following:
\begin{align}
&[\hbar^{2g}]\sum_{r=1}^\infty L_{r,1}^1\bigm|_{y=0}
[u^r] \frac{1}{u\,\cS(u\,\hbar)} \\ \notag
& \qquad \mathop{=}^{\eqref{eq:L1expr}}[\hbar^{2g}]\sum_{r=1}^\infty \frac{\partial_{y}^r}{\cS(\hbar \partial_{y})}\psi(y)\bigm|_{y=0} [u^r] \frac{1}{u\,\cS(u\,\hbar)} \\ \nonumber
&\qquad =[\hbar^{2g}]\sum_{r=1}^\infty \frac{\partial_{y}^r}{\cS(\hbar \partial_{y})}\psi(y)\bigm|_{y=0} [u^r] \left(\frac{1}{u\,\cS(u\,\hbar)}-\dfrac{1}{u}\right)\\ \nonumber
&\qquad =[\hbar^{2g}]\sum_{r=2}^\infty \frac{\partial_{y}^{r-1}}{\cS(\hbar \partial_{y})}\psi(y)\bigm|_{y=0} [u^{r}] \left(\frac{1}{\cS(u\,\hbar)}-1\right)\\ \nonumber
&\qquad =[\hbar^{2g}]\sum_{r=2}^\infty \partial_{y}^{r-1}\left(1+\sum_{k=1}^\infty \sigma_k \hbar^{2k}\partial_y^{2k}\right)\psi(y)\bigm|_{y=0} [u^{r}] \left(\sum_{m=1}^\infty \sigma_m \hbar^{2m}u^{2m}\right)\\  \notag
&\qquad =[\hbar^{2g}]\left(\sum_{m=1}^\infty \sigma_m \hbar^{2m}\partial_y^{2m-1}\right)\left(1+\sum_{k=1}^\infty \sigma_k \hbar^{2k}\partial_y^{2k}\right)\psi(y)\bigm|_{y=0}\\ \nonumber
&\qquad =\psi^{(2g-1)}(y)(0)\;[u^{2g}]\left(\dfrac{1}{\cS(u)^2}-\dfrac{1}{\cS(u)}\right).
\end{align}
Setting $z_1=0$ in the second line of \eqref{eq:Htildeg1} is trivial and we arrive at
\begin{proposition} For $n=1$ and $g>0$ we have:
\begin{align} \label{eq:(g,1)}
H_{g,1}& =[\hbar^{2g}]\sum_{j=1}^\infty D_1^{j-1}\Biggl(\sum_{r=1}^\infty\frac{L_{r,1}^j}{Q_1}
[u^r] \frac{e^{u\,(\cS(u\,\hbar\,Q D)-1)y(z_1)}}{u\,\cS(u\,\hbar)}+L_{0,1}^{j+1}\; D_1y(z_1)\Biggr)
\\ \notag & \quad
+\Bigl([u^{2g}]\frac1{\cS(u)}\Bigr)  \psi^{(2g-1)}(y(z_1))
-\Bigl([u^{2g}]\frac1{\cS(u)^2}\Bigr)  \psi^{(2g-1)}(0).
\end{align}
\end{proposition}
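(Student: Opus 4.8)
The plan is to mimic the structure of the $(g,n)$ argument from Section~\ref{sec:MainTh}, but now applied to $n=1$ with the modifications forced by the pole of $e^{u\,y(z)}/u$ at $u=0$. Concretely, I would start from formula~\eqref{eq:Hnconn} for $n=1$, in which the graph sum is trivial (there are no edges), so that $\hbar D_1H_1 = U_1^+\,1$; writing this out via Definition~\ref{def:U-operators} gives the two-summand split in~\eqref{eq:DHg1step1}. The first summand, being regular in $u$ after subtracting $e^{uy}/u$, is handled directly by the principal identity (Proposition~\ref{prop:principal}), yielding the $j,r$-sum in~\eqref{eq:g1}. For the second summand one cannot apply Lemma~\ref{lem:L1} or the principal identity because of the pole in $u$; the standard remedy, already used for $(0,1)$, is to apply $D_1$, which by~\eqref{eq:zdzQD} turns $e^{uy}/u$ into $e^{uy}Q\,D\,y(z)$, a $u$-regular expression, and then the principal identity applies. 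Combining these computations gives~\eqref{eq:D1H1}, i.e.\ a closed formula for $D_1H_{g,1}$ after extracting $[\hbar^{2g}]$.

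Next I would invert $D_1$. The only obstruction lies in the $j=0$ term: its first (the $r$-sum) piece vanishes because the $r=0$ summand is zero and there is no $r>0$ contribution not already absorbed, while its second piece equals $L^1_{0,1}\,D_1y(z_1)$, which must be rewritten as $D_1$ of something. Using the explicit form $L_0(v,y,\hbar)=\exp\!\big(v(\cS(v\hbar\partial_y)/\cS(\hbar\partial_y)-1)\psi(y)\big)$ one computes $L^1_{0,1}=\big(\tfrac1{\cS(\hbar\partial_y)}-1\big)\psi(y)\big|_{y=y(z_1)}$, and since this depends on $z_1$ only through $y(z_1)$, the chain rule gives $L^1_{0,1}\,D_1y(z_1) = D_1\big(\sum_{k\ge1}[u^{2k}]\tfrac1{\cS(u\hbar)}\,\psi^{(2k-1)}(y(z_1))\big)$. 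This is exactly the manoeuvre that makes the entire right-hand side of~\eqref{eq:D1H1} a total $D_1$-derivative, so one can define $\widetilde H_{g,1}$ by~\eqref{eq:Htildeg1}, with $D_1H_{g,1}=D_1\widetilde H_{g,1}$, hence $H_{g,1}-\widetilde H_{g,1}$ is a constant.

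It remains to pin down that constant, and this is the step I expect to be the most computational (though not conceptually hard). The idea is to evaluate $\widetilde H_{g,1}$ at $z_1=0$: there the exponential and $Q_1$ reduce to $1$, all $j>1$ terms in the $D_1^{j-1}$-sum die (they still carry a $D_1$ which lowers a positive $z_1$-power), the $j=1$ second summand $L^2_{0,1}D_1y(z_1)$ vanishes (it is divisible by $z_1$), and one is left with $[\hbar^{2g}]\sum_{r\ge1}L^1_{r,1}\big|_{y=0}[u^r]\tfrac1{u\cS(u\hbar)}$ plus the explicit $[u^{2g}]\tfrac1{\cS(u)}\,\psi^{(2g-1)}(y(z_1))$ term evaluated at $0$. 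Using~\eqref{eq:L1expr}, i.e.\ $L^1_{r,1}\big|_{y=0}=\tfrac{\partial_y^r}{\cS(\hbar\partial_y)}\psi(y)\big|_{y=0}$, one recognizes the $r$-sum as $[\hbar^{2g}]$ of $\tfrac1{\cS(\hbar\partial_y)}\big(\tfrac1{\cS(\hbar\partial_y)}-1\big)\psi(y)\big|_{y=0}$ (after shifting the $u$-index to kill the $1/u$ pole, exactly as in the displayed chain of equalities), which equals $\psi^{(2g-1)}(0)\,[u^{2g}]\big(\tfrac1{\cS(u)^2}-\tfrac1{\cS(u)}\big)$. Adding the surviving $[u^{2g}]\tfrac1{\cS(u)}\psi^{(2g-1)}(0)$ from the second line of~\eqref{eq:Htildeg1}, the $1/\cS(u)$ contributions cancel and $\widetilde H_{g,1}\big|_{z_1=0}=\psi^{(2g-1)}(0)[u^{2g}]\tfrac1{\cS(u)^2}$. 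Since $H_{g,1}$ itself is divisible by $z_1$, it vanishes at $z_1=0$, so the constant is $-\psi^{(2g-1)}(0)[u^{2g}]\tfrac1{\cS(u)^2}$, and $H_{g,1}=\widetilde H_{g,1}-\psi^{(2g-1)}(0)[u^{2g}]\tfrac1{\cS(u)^2}$, which is precisely~\eqref{eq:(g,1)}. The only real care needed is bookkeeping with the $u$-pole (the repeated ``$\tfrac1{u\cS(u\hbar)}-\tfrac1u$'' subtractions) and with the index shifts in the $\hbar$-reexpansion, matching~\eqref{eq:psidertr}.
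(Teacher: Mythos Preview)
Your proposal is correct and follows essentially the same approach as the paper's own proof in Section~\ref{sec:exceptions}: the two-summand split~\eqref{eq:DHg1step1}, the principal identity for the regular piece, the $D_1$-differentiation trick for the $e^{uy}/u$ piece, the explicit inversion of $D_1$ on the $j=0$ term via the chain-rule rewriting of $L^1_{0,1}D_1y(z_1)$, and the determination of the constant by evaluating $\widetilde H_{g,1}$ at $z_1=0$ using~\eqref{eq:L1expr}. A small notational slip: you write $\hbar D_1H_1=U_1^+\,1$, but~\eqref{eq:Hnconn} gives $H_1=U_1^+\,1$; the $D_1$ and the $\hbar$ only appear once you unpack the $X^m/m$ and $1/(u\hbar\cS(u\hbar))$ factors in~\eqref{eq:Uplusdef} to reach~\eqref{eq:DHg1step1}.
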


Note that the structure of this formula agrees with the statement of Theorem~\ref{th:intro}.

\subsection{Computation of the \texorpdfstring{$(0,2)$}{(0,2)}-term}

We have
\begin{align}
D_1D_2H_{0,2}+\frac{X_1X_2}{(X_1-X_2)^2}& \mathop{=}^{\eqref{eq:DH02}}\widehat{DH}_{0,2}
\\ \notag &
\mathop{=}^{\eqref{eq:hatDH}}[\hbar^0]U_2U_1w_{1,2}\\ \notag
&\mathop{=}^{\eqref{eq:principal}}
[\hbar^0]\sum_{j_1,j_2=0}^\infty D_1^{j_1}D_2^{j_2}\sum_{r_1,r_2=0}^\infty
\tfrac{L_{r_1,1}^{j_1}L_{r_2,2}^{j_2}}{Q_1Q_2}
[u_1^{r_1}u_2^{r_2}]\frac{z_1z_2}{(z_1-z_2)^2}\\ \notag
&=[\hbar^0]\sum_{j_1,j_2=0}^\infty D_1^{j_1}D_2^{j_2}
\tfrac{L_{0,1}^{j_1}L_{0,2}^{j_2}}{Q_1Q_2}
\frac{z_1z_2}{(z_1-z_2)^2}\\ \notag
&=\frac{1}{Q_1Q_2}\frac{z_1z_2}{(z_1-z_2)^2}.
\end{align}
Thus, we get
\begin{align}
D_1D_2H_{0,2}&=\frac{1}{Q_1Q_2}\frac{z_1z_2}{(z_1-z_2)^2}-\frac{X_1X_2}{(X_1-X_2)^2}\\ \notag
&=D_1\left(\frac{1}{Q_2}\frac{z_1}{z_2-z_1}-\frac{X_1}{X_2-X_1}\right)\\ \notag
&=D_1D_2\log\left(\frac {z_1^{-1}-z_2^{-1}}{X_1^{-1}-X_2^{-1}}\right).
\end{align}

The function $\widetilde H_{0,2}=\log\left(\frac {z_1^{-1}-z_2^{-1}}{X_1^{-1}-X_2^{-1}}\right)$ represents a regular series vanishing at $z_1=0$ and at $z_2=0$ and satisfies $D_1D_2H_{0,2}=D_1D_2\widetilde H_{0,2}$. Therefore, it coincides with $H_{0,2}$. This proves~\eqref{eq:H02}.

This completes the proof of remaining exceptional cases of Theorem~\ref{th:intro}.

\subsection{Computation of the \texorpdfstring{$(g,2)$}{(g,2)}-term, \texorpdfstring{$g>0$}{g>0}}\label{sec:g2}
This case is actually already covered by Theorem \ref{th:MainTh}, but we can present a more explicit form of the answer (in line with Remark~\ref{rem:generalExplicit}). We have
\begin{align}
D_1D_2H_{g,2}&\mathop{=}^{\eqref{eq:preDHgn}}[\hbar^{2g}]U_2U_1w_{1,2}\\ \notag
&\mathop{=}^{\eqref{eq:cleaf}}
[\hbar^{2g}]U_2
D_1 \left(\overline{U}_1 w_{1,2} + \hbar u_2 \cS(u_2\hbar Q_2D_2)\frac{z_1}{z_2-z_1}\right)
\\ \notag
&=
[\hbar^{2g}]
D_1 \left(\overline{U}_1 U_2 w_{1,2} + U_2\hbar u_2 \cS(u_2\hbar Q_2D_2)\frac{z_1}{z_2-z_1}\right)
\\ \notag &=
D_1 D_2\widetilde H_{g,2},
\end{align}
where
\begin{equation}
\widetilde H_{g,2}=[\hbar^{2g}]\left(\overline{U}_1\Bigl(\overline{U}_2 w_{1,2}
+ \hbar u_1 \cS(u_1\hbar Q_1D_1)\frac{z_2}{z_1-z_2}\Bigr)
+\overline{U}_2\Bigl( \hbar u_2 \cS(u_2\hbar Q_2D_2)\frac{z_1}{z_2-z_1}\Bigr)\right)
.
\end{equation}
One extra term that we omitted here contributes only in the case $g=0$, which we considered above.

Arguing as in the proof of Proposition~\ref{prop:tHconst}, we conclude that $H_{g,2}$ and $\widetilde H_{g,2}$ differ by a constant that is given by the same formula as in the general case, and we obtain

\begin{proposition} For $n=2$ and $g>0$ we have:
\begin{align}
H_{g,2}& =[\hbar^{2g}]\Biggl(\overline{U}_1\overline{U}_2 w_{1,2}
+ \overline{U}_1\Bigl(\hbar u_1 \cS(u_1\hbar Q_1D_1)\frac{z_2}{z_1-z_2}\Bigr)
\Biggr.\\ \notag \Biggl.
& \qquad +\overline{U}_2\Bigl( \hbar u_2 \cS(u_2\hbar Q_2D_2)\frac{z_1}{z_2-z_1}\Bigr)\Biggr)
	+ \psi^{(2g)}(0)\;[u^{2g}]\frac1{\cS^2(u)}.
\end{align}
\end{proposition}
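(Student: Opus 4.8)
The plan is to specialize the argument behind Theorem~\ref{th:MainTh} (Propositions~\ref{prop:tildeH} and~\ref{prop:tHconst}) to the single connected simple graph on two vertices, but to carry out the two inversions $D_1^{-1}$ and $D_2^{-1}$ by hand so as to land exactly on the explicit shape in the statement. The starting point is Corollary~\ref{cor:preDHgn}: since $g>0$ we have $(g,2)\neq(0,2)$, so
\[
D_1D_2H_{g,2}=[\hbar^{2g}]\,U_2U_1\,w_{1,2},
\]
the unique connected simple graph on $\{v_1,v_2\}$ being the single edge.

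Next I would peel off the leaves, exactly as in Proposition~\ref{prop:tildeH}. Writing $U_1=D_1\overline U_1+(\text{the }j{=}0\text{ summand})$ and using that $w_{1,2}$ is divisible by $u_1$, the $j=0$ summand applied to $w_{1,2}$ equals $\tfrac1{\hbar Q_1}[u_1^1]w_{1,2}$ by~\eqref{eq:contribDinv}, which by~\eqref{eq:cleaf}--\eqref{eq:leafexpr} is $D_1\bigl(\hbar u_2\cS(u_2\hbar Q_2D_2)\tfrac{z_1}{z_2-z_1}\bigr)$; hence $U_1w_{1,2}=D_1\bigl(\overline U_1w_{1,2}+\hbar u_2\cS(u_2\hbar Q_2D_2)\tfrac{z_1}{z_2-z_1}\bigr)$. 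Applying $U_2$, which commutes with $D_1$ and with $\overline U_1$, and performing the analogous peeling both on $U_2w_{1,2}$ and on $U_2\bigl(\hbar u_2\cS(u_2\hbar Q_2D_2)\tfrac{z_1}{z_2-z_1}\bigr)$, I expect to obtain $D_1D_2\widetilde H_{g,2}$, with $\widetilde H_{g,2}$ the combination displayed in the statement, plus one residual term $[\hbar^{2g}]\,D_1\bigl(\tfrac1{Q_2}\tfrac{z_1}{z_2-z_1}\bigr)$ coming from the $j=0$, $[u_2^1]$ part of the second peeling. A direct computation gives $D_1\bigl(\tfrac1{Q_2}\tfrac{z_1}{z_2-z_1}\bigr)=\tfrac1{Q_1Q_2}\tfrac{z_1z_2}{(z_1-z_2)^2}$, which is free of $\hbar$ (recall~\eqref{eq:Qdef}), so $[\hbar^{2g}]$ annihilates it for $g>0$; this is precisely the term that survives at $g=0$ and reproduces $H_{0,2}$. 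Thus $D_1D_2H_{g,2}=D_1D_2\widetilde H_{g,2}$.

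It then remains to pin down the constant of integration, which I would do exactly as in the proof of Proposition~\ref{prop:tHconst}: work in the ring $R=\mathbb{C}[[z_1,z_2]][(z_1-z_2)^{-1}]$ and reduce $\widetilde H_{g,2}$ modulo the ideal $I=(z_1z_2)$. Since $w_{1,2}\in I$ and the operators $\overline U_i$ and $D_i$ preserve $I$, the term $\overline U_1\overline U_2w_{1,2}$ lies in $I$, so modulo $I$ only the two leaf-correction terms of $\widetilde H_{g,2}$ remain, and these coincide with the right-hand side of~\eqref{eq:HtildeI} specialized to $n=2$; the computation~\eqref{eq:psidertr}--\eqref{eq:Htildeansw}, using $\sum_{k=1}^2\prod_{i\neq k}\tfrac{z_i}{z_k-z_i}=-1$ and $Q_k\equiv\cS(u_k\hbar Q_kD_k)\equiv1\pmod{(z_k)}$, then yields $\widetilde H_{g,2}\equiv-\psi^{(2g)}(0)[u^{2g}]\tfrac1{\cS^2(u)}\pmod I$. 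Both $H_{g,2}$ and $\widetilde H_{g,2}+\psi^{(2g)}(0)[u^{2g}]\tfrac1{\cS^2(u)}$ then lie in $I$ and their difference is annihilated by $D_1D_2$; since an element of $I$ killed by $D_1D_2$ must vanish (it is of the form $g_1(z_1)+g_2(z_2)$, and vanishing at $z_1=0$ and at $z_2=0$ forces it to be zero), we conclude the stated formula. The rationality of the $[\hbar^{2g}]$-coefficient is inherited from the rationality statement of Theorem~\ref{prop:mainprop}.

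The main obstacle I anticipate is the bookkeeping in the middle step: keeping precise track of which pieces generated by $U_1$ and $U_2$ after the peelings are genuine elements of $R$ (and therefore get absorbed into $\widetilde H_{g,2}$) versus the one anomalous piece $\tfrac1{Q_2}\tfrac{z_1}{z_2-z_1}$, whose primitive under $D_2^{-1}$ is of logarithmic type and hence not in $R$, together with checking that its $D_1$-image is $\hbar$-free so that it disappears for $g>0$; and, relatedly, making the constant-of-integration step rigorous through the mod-$I$ reduction rather than a naive integration of $D_1D_2H_{g,2}=D_1D_2\widetilde H_{g,2}$.
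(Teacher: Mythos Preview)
Your proposal is correct and follows essentially the same approach as the paper's own argument in Section~\ref{sec:g2}: peel off the leaf contributions of $U_1$ and $U_2$ to produce $\widetilde H_{g,2}$, observe that the single residual term is $\hbar$-free and hence vanishes under $[\hbar^{2g}]$ for $g>0$, and then invoke the mod-$I$ computation of Proposition~\ref{prop:tHconst} to fix the constant. If anything, you are more explicit than the paper, which simply writes ``One extra term that we omitted here contributes only in the case $g=0$'' where you actually identify that term as $\tfrac{1}{Q_1Q_2}\tfrac{z_1z_2}{(z_1-z_2)^2}$ and verify its $\hbar$-independence.
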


Remark that the structure of the obtained answer agrees with that suggested by Theorem~\ref{th:intro} and correlates with Equation~\eqref{eq:explicitanswer}.

\section{Applying general formula} \label{sec:applying}

In this section we derive explicit expressions for $H_{g,n}$ for small $g$ and $n$ in terms of small number of basic functions. These functions include:
\begin{equation}
\begin{aligned}
\psi^{(k)}_i&=\psi^{(k)}(y(z_i)),\quad k\ge 1,\\
y^{[k]}_i&=\left(z_i\partial_{z_i}\right)^k y(z_i),\quad k\ge1,\\
Q_i&=Q(z_i)=1-\psi'_i y^{[1]}_i.
\end{aligned}
\end{equation}
If $n=1$ we set $z_1=z$ and drop the bottom index $i=1$. In the case $n\ge2$ we will use also functions
\begin{equation}
\begin{aligned}
\gamma_{i,j}&=\gamma_{j,i}=\frac{z_iz_j}{(z_i-z_j)^2},\\
\gamma^{[k]}_{i,j}&=(-1)^k\gamma^{[k]}_{j,i}=\left(z_i\partial_{z_i}\right)^k\gamma_{i,j},\quad k\ge0.
\end{aligned}
\end{equation}
Then, according to (the proof of) Proposition~\ref{prop:tildeH}, the application of $D_i^{-1}$ is reduced to
\begin{equation}
D_i^{-1}(Q_i \gamma^{[k]}_{i,j})=(-1)^kD_i^{-1}(Q_i \gamma^{[k]}_{j,i})=\left(z_i\partial_{z_i}\right)^{-1}\gamma^{[k]}_{i,j}:=\gamma^{[k-1]}_{i,j}.
\end{equation}
This formula can be applied also for $k=0$ if we set, in addition,
\begin{equation}
\gamma^{[-1]}_{i,j}=-1-\gamma^{[-1]}_{j,i}=\frac{z_i}{z_j-z_i}.
\end{equation}

\subsection{Computations for \texorpdfstring{$n=1$}{n=1}}

Substituting the genus expansions
\begin{equation}\label{eq:V-exp}
\tfrac{e^{u(\cS(u\,\hbar\, z\partial_{z})-1)y(z)}}{u\cS(u\hbar)}
=\frac{1}{u}+\frac{1}{24} \left(u^2 y^{[2]}-u \right)\hbar ^2 +O(\hbar^4)
\end{equation}
to Equation~\eqref{eq:(g,1)} in the case $n=1$, $g\ge1$, we obtain
\begin{align}
H_{g,1} & =[\hbar^{2g}]\sum_{j=0}^\infty D^j \tfrac{1}{Q}[v^j]
\left(\tfrac1{24}\left(\tfrac{L_2(v,y,\hbar)}{v}y^{[2]}-\tfrac{L_1(v,y,\hbar)}{v}\right)\hbar^2
+\tfrac{L_0(v,y,\hbar)}{v^2}+O(\hbar^4)\right)
\\ \notag & \qquad
+[u^{2g}]\tfrac1{\cS(u)}\;  \psi^{(2g-1)}(y(z))
-[u^{2g}]\tfrac1{\cS(u)^2}\;  \psi^{(2g-1)}(0).
\end{align}
Then, using explicit expressions for the series $L_r$,
\begin{align}\label{eq:L-exp}
L_0(v,y,\hbar)&=1+( v^3- v)\frac{\psi ''(y)}{24} \hbar ^2+O(\hbar^4),\\ \notag
L_1(v,y,\hbar)&=v\,\psi'(y)+O(\hbar^2),\\ \notag
L_2(v,y,\hbar)&=v\,\psi ''(y)+v^2 \psi '(y)^2+O(\hbar^2),\\ \notag
L_3(v,y,\hbar)&=v^3 \psi '(y)^3+3\,v^2 \psi '(y) \psi ''(y)+v\,\psi ^{(3)}(y)+O(\hbar^2),
\end{align}
we obtain, in the case $g=1$,
\begin{equation}
H_{1,1}
=D \tfrac{\left(\psi'\right)^2 y^{[2]}+\psi''y^{[1]}}{24Q}+
\tfrac{\psi'' y^{[2]}-\psi'}{24Q}-\tfrac{\psi'}{24}+\tfrac{\psi'(0)}{12}.
\end{equation}

Similar computations in the case $(g,n)=(2,1)$ give
\begin{align}
H_{2,1}(z)& =D^4\tfrac{10 \psi'' \left(\psi'\right)^2 y^{[2]}+5 \left(\psi''\right)^2 y^{[1]}+5 \left(\psi'\right)^5 \left(y^{[2]}\right)^2}{5760\,Q}
+\cdots\\ \notag & \qquad
+\tfrac{5 \psi^{(5)} \left(y^{[2]}\right)^2-20 \psi^{(4)} y^{[2]}+3 \psi^{(4)} y^{[4]}+17 \psi^{(3)}+5 \left(\psi''\right)^2 y^{[1]}}{5760\,Q}
+\tfrac{7\psi^{(3)}}{5760}-\tfrac{\psi ^{(3)}(0)}{240}
\end{align}
where the dots denote the terms containing $D^j$ with $j=1,2,3$.

\subsection{Computations for \texorpdfstring{$n=2$}{n=2}}
If $n\ge1$, then equation of Theorem~\ref{th:MainTh} can be applied.
It is convenient to represent the transformation $D^{-1}U$ of Theorem~\ref{th:MainTh} acting on a function $f(u,z)$ in~$u$ and $z$ as follows
\begin{equation}
D^{-1}U f=\frac1{\hbar}\sum_{r=0}^\infty M_r([u^r]f)
\end{equation}
where $M_r$ is the differential operator acting on a function $f(z)$ in $z$ by
\begin{equation}
M_rf=\sum_{k,j=0}^\infty D^{j-1}\left(\tfrac{[v^j] L_{k}(v,y(z),\hbar)}{Q}\;[u^k]u^r\tfrac{e^{u(\cS(u\,\hbar\,z\partial_z)-1)y(z)}}{u \cS(u\hbar)}\;f\right).
\end{equation}

From~\eqref{eq:V-exp} and~\eqref{eq:L-exp} we find, explicitly,
\begin{align}
M_1f&=D^{-1}\tfrac{f}{Q}+
\left(\tfrac{\left(\psi^{(3)} y^{[2]}-2 \psi''\right)f}{24Q}
+ D\tfrac{\psi' \left(3 \psi'' y^{[2]}-\psi'\right)f}{24Q}
+ D^2\tfrac{\left(\left(\psi'\right)^3 y^{[2]}+\psi''\right)f}{24Q}
\right)\hbar ^2 +O(\hbar^4),\\ \notag
M_2f&=\tfrac{\psi'f}{Q}+O(\hbar^2),\\ \notag
M_3f&=
\tfrac{\psi''f}{Q}
+D \tfrac{\left(\psi'\right)^2f}{Q}
+O(\hbar^2).
\end{align}

We denote by $M_{k,i}$ the transformation $M_k$ applied to the functions in~$u_i$ and~$z_i$ instead of~$u$ and~$z$, respectively. With this notation, the statement of Theorem~\ref{th:MainTh} can be written as follows
\begin{align}
H_{g,n} & = [\hbar^{2g}] \sum_{r_1,\dots,r_n=1}^\infty M_{r_1,1}\dots M_{r_n,n} [u_1^{r_1}\dots u_n^{r_n}]
\sum\limits_{\gamma\in\Gamma_n}\hbar^{2(|E_\gamma|-n+1)}\prod\limits_{\{v_i,v_j\}\in E_\gamma} \overline w_{i,j}\\ \notag & \qquad
+ (-1)^{n}[u^{2g}]\tfrac1{\cS^2(u)}\;\psi^{(2g+n-2)}(0),
\end{align}
where
\begin{align}
\overline w_{i,j}=\frac{w_{i,j}}{\hbar^2}& =\frac{e^{\hbar^2u_iu_j\cS(u_i\hbar z_i\partial_{z_i})\cS(u_j\hbar z_j\partial_{z_j})\gamma_{i,j}}-1}{\hbar^{2}}\\ \notag
&=u_iu_j\gamma_{i,j}+\left(\frac{u_i^3u_j+u_iu_j^3}{24}\gamma_{i,j}^{[2]}
+\frac12 u_i^2u_j^2(\gamma_{i,j})^2\right)\hbar^2+O(\hbar^2).
\end{align}

If $n=2$, then the sum over graphs is reduced to just $\overline w_{1,2}$ and we get
\begin{align}
H_{g,2}&= [\hbar^{2g}] \sum_{r_1,r_2=1}^\infty M_{r_1,1} M_{r_2,2}[u_1^{r_1}u_2^{r_2}]\overline w_{1,2}
+ [u^{2g}]\tfrac1{\cS^2(u)}\psi^{(2g)}(0)\\ \notag
&=
[\hbar^{2g}] \Biggl(M_{1,1}M_{1,2}\gamma_{1,2}
+\left(M_{3,1}M_{1,2}\frac{\gamma_{1,2}^{[2]}}{24}
+M_{1,1}M_{3,2}\frac{\gamma_{1,2}^{[2]}}{24}+M_{2,1}M_{2,2}\frac{(\gamma_{1,2})^2}{2}\right)\hbar^2\Biggr.
\\ \notag &\qquad\qquad\qquad \Biggl.{}+O(\hbar^4)\Biggr)
+ [u^{2g}]\tfrac1{\cS^2(u)}\psi^{(2g)}(0).
\end{align}
In particular, for $(g,n)=(1,2)$ we have
\begin{align}
H_{1,2}&=
D_1^2\tfrac{\gamma_{2,1}^{[-1]} (\psi_1''+(\psi_1')^3 y_1^{[2]})}{24Q_1}+
D_1\tfrac{\psi_1' (\psi_1' (\gamma_{2,1}^{[1]}-\gamma_{2,1}^{[-1]})+3 \gamma_{2,1}^{[-1]} \psi_1'' y_1^{[2]})}
{24Q_1}+
\tfrac{\psi_1''(\gamma_{2,1}^{[1]}-2 \gamma_{2,1}^{[-1]})+\psi_1^{(3)} \gamma_{2,1}^{[-1]} y_1^{[2]}}{24Q_1}
\\ \notag & \qquad
+D_2^2\tfrac{\gamma_{1,2}^{[-1]} (\psi_2''+(\psi_2')^3 y_2^{[2]})}{24Q_2}+
D_2\tfrac{\psi_2' (\psi_2' (\gamma_{1,2}^{[1]}-\gamma_{1,2}^{[-1]})+3 \gamma_{1,2}^{[-1]} \psi_2'' y_2^{[2]})}
{24Q_2}+
\tfrac{\psi_2''(\gamma_{1,2}^{[1]}-2 \gamma_{1,2}^{[-1]})+\psi_2^{(3)} \gamma_{1,2}^{[-1]} y_2^{[2]}}{24Q_2}
\\ \notag &\qquad
+\tfrac{(\gamma_{1,2})^2 \psi_1' \psi_2'}{2Q_1Q_2}
-\frac{1}{12}\psi''(0).
\end{align}

\subsection{Computations for \texorpdfstring{$n=3$}{n=3}}
There are four possible connected simple graphs on three labeled vertices, and summing up the contributions of these four graphs we get
\begin{align}
H_{g,3}& =[\hbar^{2g}]\sum_{r_1,r_2,r_3=1}^\infty M_{r_1,1} M_{r_2,2} M_{r_3,3}[u_1^{r_1}u_2^{r_2}u_3^{r_3}]
\bigl(\overline w_{1,2}\overline w_{1,3}+\overline w_{1,2}\overline w_{3,3}+\overline w_{1,3}\overline w_{2,3}
\\ \notag & \qquad +\hbar^2 \overline w_{1,2}\overline w_{1,3}\overline w_{2,3}\bigr)
- [u^{2g}]\tfrac1{\cS^2(u)}\psi^{(2g+1)}(0).
\end{align}
For instance, for $g=0$ using $\overline w_{i,j}=u_iu_j\gamma_{i,j}+O(\hbar^2)$ we get
\begin{equation}
H_{0,3}=\Bigl(
M_{2,1}M_{1,2}M_{1,3}\gamma_{1,2}\gamma_{1,3}+
M_{1,1}M_{2,2}M_{1,3}\gamma_{1,2}\gamma_{2,3}+
M_{1,1}M_{1,2}M_{2,3}\gamma_{1,3}\gamma_{2,2}\Bigr)\Bigm|_{\hbar=0}-\psi'(0).
\end{equation}
This gives the final answer
\begin{align}\label{eq:H03}
H_{0,3}&=
\frac{\psi_1'}{Q_1}\gamma_{2,1}^{[-1]}\gamma_{3,1}^{[-1]}
+\frac{\psi_2'}{Q_2}\gamma_{1,2}^{[-1]}\gamma_{3,2}^{[-1]}
+\frac{\psi_3'}{Q_3}\gamma_{1,3}^{[-1]}\gamma_{2,3}^{[-1]}
-\psi'(0)\\ \nonumber
&=
\sum_{i=1}^3\frac{\psi'(y_i)}{Q(z_i)}\prod_{j\ne i}\frac{z_j}{z_i-z_j}-\psi'(0).
\end{align}
\begin{remark} Note that Equation~\eqref{eq:H03} differs from \cite[Proposition 10.2 and Equation (10.4)]{ACEH-2} produced with the help of the spectral curve topological recursion. The formula given in Equation~(10.4) in~\emph{op.~cit.} does not appear to be vanishing on the coordinate axes and seems to have an incorrect overall sign, which are typical bugs that often occur in applications of topological recursion.
\end{remark}

\subsection{Computation for \texorpdfstring{$(g,n)=(0,4)$}{(g,n)=(0,4)}}
In the case $g=0$ the graphs that contribute to $H_{0,n}$ are trees. For $n=4$
there are
$4$ trees on $4$ labeled vertices isomorphic to%
\begin{picture}(25,5)(-4,0)
\put(0,0){\circle*{2}}
\put(0,8){\circle*{2}}
\put(10,4){\circle*{2}}
\put(18,4){\circle*{2}}
\put(0,0){\line(5,2){10}}
\put(0,8){\line(5,-2){10}}
\put(10,4){\line(1,0){8}}
\end{picture}
and $12$  more trees isomorphic to%
\begin{picture}(28,5)(-4,-3)
\put(0,0){\circle*{2}}
\put(7,0){\circle*{2}}
\put(14,0){\circle*{2}}
\put(21,0){\circle*{2}}
\put(0,0){\line(1,0){21}}
\end{picture}.
They contribute to the corresponding summands in~$H_{0,4}$:
\begin{align}
H_{0,4}&= [\hbar^0]\sum_{r_1,\dots,r_4\geq 1}
\left(\textstyle \prod_{k=1}^4 M_{r_k,k}\right)
\Bigl[{ \textstyle\prod_{k=1}^4 u_k^{r_k}}\Bigr]\;
\\ \notag &\qquad\qquad
\Biggl(
\bigl(u_1u_2\gamma_{1,2}u_1u_3\gamma_{1,3}u_1u_4\gamma_{1,4}+\dots(\text{\it$4$ terms in total})\bigr)
\\ \notag &\qquad \qquad
+\bigl(u_1u_2\gamma_{1,2}u_2u_3\gamma_{2,3}u_3u_4\gamma_{3,4}+\dots(\text{\it$12$ terms in total})\bigr)
\Biggr)
\\ \notag & \qquad +\psi''(0) \\ \notag
&=
\Bigl(
\Bigl(M_{3,1}M_{1,2}M_{1,3}M_{1,4}\bigl(\gamma_{1,2}\gamma_{1,3}\gamma_{1,4}\bigr)
+\dots(\text{\it$4$ terms in total})\Bigr)
\Biggr.\Biggr.\\ \notag &\qquad\Biggl.\Biggl.
+
\Bigl(M_{1,1}M_{2,2}M_{2,3}M_{1,4}\bigl(\gamma_{1,2}\gamma_{2,3}\gamma_{3,4}\bigr)
+\dots(\text{\it$12$ terms in total})\Bigr)
\Bigr)\Bigm|_{\hbar=0}+\;\psi''(0),
\end{align}
and we get the final answer
\begin{align}
H_{0,4}&=
\Bigl(D_1\frac{(\psi'_1)^2\gamma_{2,1}^{[-1]}\gamma_{3,1}^{[-1]}\gamma_{4,1}^{[-1]}}{Q_1}+
\frac{\psi''_1\gamma_{2,1}^{[-1]}\gamma_{3,1}^{[-1]}\gamma_{4,1}^{[-1]}}{Q_1}
+\dots(\text{\it$2\times4$ terms in total})\Bigr)
\\ \notag & \qquad +
\Bigl(\frac{\psi'_2\psi'_3\gamma_{1,2}^{[-1]}\gamma_{2,3}\gamma_{4,3}^{[-1]}}
{Q_2Q_3}+\dots(\text{\it$12$ terms in total})\Bigr)
+\psi''(0).
\end{align}

\bibliographystyle{alphaurl}
\bibliography{top_rec}

\newcommand{\etalchar}[1]{$^{#1}$}
\begin{thebibliography}{DBKO{\etalchar{+}}15}

\bibitem[ACEH18a]{ACEHferm}
A.~Alexandrov, G.~Chapuy, B.~Eynard, and J.~Harnad.
\newblock Fermionic approach to weighted {H}urwitz numbers and topological
  recursion.
\newblock {\em Comm. Math. Phys.}, 360(2):777--826, 2018.
\newblock \href {https://doi.org/10.1007/s00220-017-3065-9}
  {\path{doi:10.1007/s00220-017-3065-9}}.

\bibitem[ACEH18b]{ACEH-1}
A.~Alexandrov, G.~Chapuy, B.~Eynard, and J.~Harnad.
\newblock Weighted {H}urwitz numbers and topological recursion: an overview.
\newblock {\em J. Math. Phys.}, 59(8):081102, 21, 2018.
\newblock \href {https://doi.org/10.1063/1.5013201}
  {\path{doi:10.1063/1.5013201}}.

\bibitem[ACEH20]{ACEH-2}
A.~Alexandrov, G.~Chapuy, B.~Eynard, and J.~Harnad.
\newblock Weighted {H}urwitz numbers and topological recursion.
\newblock {\em Comm. Math. Phys.}, 375(1):237--305, 2020.
\newblock \href {https://doi.org/10.1007/s00220-020-03717-0}
  {\path{doi:10.1007/s00220-020-03717-0}}.

\bibitem[ALS16]{ALS}
A.~Alexandrov, D.~Lewanski, and S.~Shadrin.
\newblock Ramifications of {H}urwitz theory, {KP} integrability and quantum
  curves.
\newblock {\em J. High Energy Phys.}, (5):124, front matter+30, 2016.
\newblock \href {https://doi.org/10.1007/JHEP05(2016)124}
  {\path{doi:10.1007/JHEP05(2016)124}}.

\bibitem[BDBKS20]{BDKS21toprec}
B.~Bychkov, P.~Dunin-Barkowski, M.~Kazarian, and S.~Shadrin.
\newblock Topological recursion for {K}adomtsev--{P}etviashvili tau functions
  of hypergeometric type.
\newblock {\em arXiv e-prints}, Dec 2020.
\newblock \href {http://arxiv.org/abs/2012.14723} {\path{arXiv:2012.14723}}.

\bibitem[BDBKS21]{BDKS21fully}
B.~Bychkov, P.~Dunin-Barkowski, M.~Kazarian, and S.~Shadrin.
\newblock Generalised ordinary vs fully simple duality for $n$-point functions
  and a proof of the {B}orot--{G}arcia-{F}ailde conjecture.
\newblock {\em arXiv e-prints}, Jun 2021.
\newblock \href {http://arxiv.org/abs/2106.08368} {\path{arXiv:2106.08368}}.

\bibitem[BDBS20]{BDS}
Boris Bychkov, Petr Dunin-Barkowski, and Sergey Shadrin.
\newblock Combinatorics of {Bousquet}-{M\'elou}--{Schaeffer} numbers in the
  light of topological recursion.
\newblock {\em European J. Combin.}, 90:103184, 2020.
\newblock \href {https://doi.org/10.1016/j.ejc.2020.103184}
  {\path{doi:10.1016/j.ejc.2020.103184}}.

\bibitem[BDK{\etalchar{+}}20]{BDKLM}
Ga{\"e}tan {Borot}, Norman {Do}, Maxim {Karev}, Danilo {Lewanski}, and Ellena
  {Moskovsky}.
\newblock Double {Hurwitz} numbers: polynomiality, topological recursion and
  intersection theory.
\newblock {\em arXiv e-prints}, Feb 2020.
\newblock \href {http://arxiv.org/abs/2002.00900} {\path{arXiv:2002.00900}}.

\bibitem[BS17]{BS17}
Ga\"{e}tan Borot and Sergey Shadrin.
\newblock Blobbed topological recursion: properties and applications.
\newblock {\em Math. Proc. Cambridge Philos. Soc.}, 162(1):39--87, 2017.
\newblock \href {https://doi.org/10.1017/S0305004116000323}
  {\path{doi:10.1017/S0305004116000323}}.

\bibitem[DBKO{\etalchar{+}}15]{DKOSS}
P.~Dunin-Barkowski, M.~Kazarian, N.~Orantin, S.~Shadrin, and L.~Spitz.
\newblock Polynomiality of {H}urwitz numbers, {B}ouchard-{M}ari\~{n}o
  conjecture, and a new proof of the {ELSV} formula.
\newblock {\em Adv. Math.}, 279:67--103, 2015.
\newblock \href {https://doi.org/10.1016/j.aim.2015.03.016}
  {\path{doi:10.1016/j.aim.2015.03.016}}.

\bibitem[DBKP{\etalchar{+}}20]{DKPSS}
Petr Dunin-Barkowski, Maxim Kazarian, Aleksandr Popolitov, Sergey Shadrin, and
  Alexey Sleptsov.
\newblock Topological recursion for the extended {O}oguri--{V}afa partition
  function of colored {HOMFLY--PT} polynomials of torus knots.
\newblock {\em arXiv e-prints}, Oct 2020.
\newblock \href {http://arxiv.org/abs/2010.11021} {\path{arXiv:2010.11021}}.

\bibitem[DBLPS15]{DLPS}
P.~Dunin-Barkowski, D.~Lewanski, A.~Popolitov, and S.~Shadrin.
\newblock Polynomiality of orbifold {H}urwitz numbers, spectral curve, and a
  new proof of the {J}ohnson-{P}andharipande-{T}seng formula.
\newblock {\em J. Lond. Math. Soc. (2)}, 92(3):547--565, 2015.
\newblock \href {https://doi.org/10.1112/jlms/jdv047}
  {\path{doi:10.1112/jlms/jdv047}}.

\bibitem[DBPSS19]{DPSS}
Petr Dunin-Barkowski, Aleksandr Popolitov, Sergey Shadrin, and Alexey Sleptsov.
\newblock Combinatorial structure of colored {HOMFLY}-{PT} polynomials for
  torus knots.
\newblock {\em Commun. Number Theory Phys.}, 13(4):763--826, 2019.
\newblock \href {https://doi.org/10.4310/cntp.2019.v13.n4.a3}
  {\path{doi:10.4310/cntp.2019.v13.n4.a3}}.

\bibitem[Har16]{Harnad2014}
J~Harnad.
\newblock Weighted {H}urwitz numbers and hypergeometric {$\tau$}-functions: an
  overview.
\newblock In {\em String-Math 2014}, volume~93 of {\em Proc. Sympos. Pure
  Math.}, pages 289--333. Amer. Math. Soc., Providence, RI, 2016.

\bibitem[Kac90]{Kac}
V.~G. Kac.
\newblock {\em Infinite-dimensional Lie algebras}.
\newblock Cambridge University Press, Cambridge, 1990.
\newblock \href {https://doi.org/10.1017/CBO9780511626234}
  {\path{doi:10.1017/CBO9780511626234}}.

\bibitem[Kaz19]{Kaz2019}
M.~Kazarian.
\newblock Topological recursion for generalized {H}urwitz numbers.
\newblock Talk at Skoltech Center for Advanced Studies, 2019.
\newblock URL: \url{https://youtu.be/Hq7RVZBAsf8}.

\bibitem[Kaz20]{Kaz2020a}
M.~Kazarian.
\newblock Quasirationality of weighted {H}urwitz numbers.
\newblock Talk at the National Research University Higher School of Economics,
  2020.
\newblock URL: \url{https://youtu.be/Yu1dGgBidr0}.

\bibitem[Kaz21]{Kaz2020}
M.~Kazarian.
\newblock A paper in preparation.
\newblock Preprint, 2021.

\bibitem[KL15]{KazLand}
M.~\`E. Kazaryan and S.~K. Lando.
\newblock Combinatorial solutions to integrable hierarchies.
\newblock {\em Uspekhi Mat. Nauk}, 70(3(423)):77--106, 2015.
\newblock \href {https://doi.org/10.4213/rm9661} {\path{doi:10.4213/rm9661}}.

\bibitem[KLPS19]{KLPS19}
R.~Kramer, D.~Lewanski, A.~Popolitov, and S.~Shadrin.
\newblock Towards an orbifold generalization of {Z}vonkine's {$r$}-{ELSV}
  formula.
\newblock {\em Trans. Amer. Math. Soc.}, 372(6):4447--4469, 2019.
\newblock \href {https://doi.org/10.1090/tran/7793}
  {\path{doi:10.1090/tran/7793}}.

\bibitem[KLS19]{KLS}
Reinier Kramer, Danilo Lewanski, and Sergey Shadrin.
\newblock Quasi-polynomiality of monotone orbifold {H}urwitz numbers and
  {G}rothendieck's dessins d'enfants.
\newblock {\em Doc. Math.}, 24:857--898, 2019.

\bibitem[KMMM95]{Kharchev}
S.~Kharchev, A.~Marshakov, A.~Mironov, and A.~Morozov.
\newblock Generalized {K}azakov-{M}igdal-{K}ontsevich model: group theory
  aspects.
\newblock {\em Internat. J. Modern Phys. A}, 10(14):2015--2051, 1995.
\newblock \href {https://doi.org/10.1142/S0217751X9500098X}
  {\path{doi:10.1142/S0217751X9500098X}}.

\bibitem[KZ15]{KZ2015}
Maxim Kazarian and Peter Zograf.
\newblock {Virasoro} constraints and topological recursion for {Grothendieck's}
  dessin counting.
\newblock {\em Lett. Math. Phys.}, 105(8):1057--1084, 2015.
\newblock \href {https://doi.org/10.1007/s11005-015-0771-0}
  {\path{doi:10.1007/s11005-015-0771-0}}.

\bibitem[MJD00]{MiwaJimboDate}
T.~Miwa, M.~Jimbo, and E.~Date.
\newblock {\em Solitons}, volume 135 of {\em Cambridge Tracts in Mathematics}.
\newblock Cambridge University Press, Cambridge, 2000.
\newblock Differential equations, symmetries and infinite-dimensional algebras,
  Translated from the 1993 Japanese original by Miles Reid.

\bibitem[OS01a]{OrlovScherbin}
A.~Yu. Orlov and D.~M. Shcherbin.
\newblock Hypergeometric solutions of soliton equations.
\newblock {\em Teoret. Mat. Fiz.}, 128(1):84--108, 2001.
\newblock \href {https://doi.org/10.1023/A:1010402200567}
  {\path{doi:10.1023/A:1010402200567}}.

\bibitem[OS01b]{OrlovScherbin-2}
A.~Yu. Orlov and D.~M. Shcherbin.
\newblock Multivariate hypergeometric functions as {$\tau$}-functions of {T}oda
  lattice and {K}adomtsev-{P}etviashvili equation.
\newblock {\em Physica D}, 152-153:51--65, 2001.
\newblock \href {https://doi.org/10.1016/S0167-2789(01)00158-0}
  {\path{doi:10.1016/S0167-2789(01)00158-0}}.

\bibitem[SSZ12]{SSZ12}
S.~Shadrin, L.~Spitz, and D.~Zvonkine.
\newblock On double {H}urwitz numbers with completed cycles.
\newblock {\em J. Lond. Math. Soc. (2)}, 86(2):407--432, 2012.
\newblock \href {https://doi.org/10.1112/jlms/jds010}
  {\path{doi:10.1112/jlms/jds010}}.

\end{thebibliography}

\end{document}